\definecolor{mypurple}{RGB}{255,60,0}
\definecolor{myorange}{RGB}{40,113,176}
\let\fullref\autoref
\newtheorem{maintheorem}{Theorem}
\newtheorem{maincorollary}{Corollary}
\newtheorem{conjecture}{Conjecture}[section]
\newtheorem{theorem}{Theorem}[section]
\newtheorem{corollary}{Corollary}[section]
\newtheorem{proposition}{Proposition}[section]
\newtheorem{lemma}{Lemma}[section]
\theoremstyle{definition}
\newtheorem{definition}{Definition}[section]
\newtheorem{remark}{Remark}[section]
\newtheorem{example}{Example}[section]
\let\c@maincorollary=\c@maintheorem
\let\c@corollary=\c@theorem
\let\c@proposition=\c@theorem
\let\c@lemma=\c@theorem
\let\c@remark=\c@theorem
\let\c@definition=\c@theorem
\let\c@example=\c@theorem
\let\c@question=\c@theorem
\def\makeautorefname#1#2{\expandafter\def\csname#1autorefname\endcsname{#2}}
\DeclareMathOperator{\PSL}{\mathrm{PSL}}
\newcommand{\A}{\mathcal{A}}
\begin{document}
%%%%%%%%%%%%%%%%%%%%%%%%%%%%%%%%%%%%%%%%%%%%%%%%%%%%%%%

%\thispagestyle{empty}
%
\title[Surgery on Anosov flows using bi-contact geometry]{Surgery on Anosov flows using bi-contact geometry}

\author{Federico Salmoiraghi}\thanks{This research was supported by the Israel Science Foundation (grant No. 51/4051)}
\address{Department of Mathematics \\ Queen's University \\Kingston, Canada}
\email{\href{mailto:federico.salmoiraghi@queensu.ca}{federico.salmoiraghi@queensu.ca}}

%\keywords{Heegaard Floer homology}
%\subjclass[2010]{57M27; 57R58}

%%%%%%%%%%%%%%%%%%%%%%%%%%%%%%%%%%%%%%%%%%%%%%%%%%%%%%%

%%%%%%%%%%%%%%%%%%%%%%%%%%%%%%%%%%%%%%%%%%%%%%%%%%%%%%%
\begin{abstract}
Using bi-contact geometry, we define a new type of Dehn surgery on an Anosov flow with orientable weak invariant foliations. The Anosovity of the new flow is strictly connected to contact geometry and the Reeb dynamics of the defining bi-contact structure. %In particular we show that near a closed orbit the new construction produces hyperbolicity regardless of the sign of the Dehn twist.
This approach gives new insights into the properties of the flows produced by Goodman surgery and clarifies under which conditions Goodman's construction yields an Anosov flow. Our main application gives a necessary and sufficient condition to generate a contact Anosov flow by Foulon-Hasselblatt Legendrian surgery on a geodesic flow. In particular we show that this is possible if and only if the surgery is performed along a simple closed geodesic. As a corollary we have that any (positive) skewed $\mathbb{R}$-covered Anosov flow obtained by surgery on a closed orbit of a geodesic flow is orbit equivalent to a (positive) contact Anosov flow.  %on a geodesicit can be interpreted as the counterpart in the contact category of results proven recently by  Bonatti-Iakovoglou \cite{BoIo} and Marty \cite{Ma} in the category of $\mathbb{R}$-covered Anosov flows. As a corollary 
\end{abstract}
%%%%%%%%%%%%%%%%%%%%%%%%%%%%%%%%%%%%%%%%%%%%%%%%%%%%%%%

\maketitle

%%%%%%%%%%%%%%%%%%%%%%%%%%%%%%%%%%%%%%%%%%%%%%%%%%%%%%%

%\tableofcontents

%%%%%%%%%%%%%%%%%%%%%%%%%%%%%%%%%%%%%%%%%%%%%%%%%%%%%%%
\section{Introduction} % (fold)
\label{sec:intro}
%%%%%%%%%%%%%%%%%%%%%%%%%%%%%%%%%%%%%%%%%%%%%%%%%%%%%%%

The use of surgery has tremendously advanced our understanding of Anosov flows on $3$-manifolds. In the groundbreaking work of Handel and Thurston \cite{HaTh} the first example of non-algebraic transitive Anosov flow was constructed performing surgery on a geodesic flow. Inspired by Handel and Thurston work, Goodman \cite{Goo} defined a Dehn type surgery near a closed orbit and constructed the first examples of an Anosov flow on a hyperbolic 3-manifold.

In the present work we introduce a new type of Dehn surgery on Anosov flows using a supporting {\it bi-contact structure}.
Mitsumatsu \cite{Mit} first noticed that the generating vector field of an Anosov flow belongs to the intersection of a pair of transverse contact structures $(\xi_-,\xi_+)$ rotating towards each other along the flow and asymptotic to the stable and unstable foliations. These pairs are called bi-contact structures and the associated flows are called {\it projectively Anosov flows}. Projectively Anosov flows are more abundant than Anosov flows: it can  be shown indeed that every closed 3-manifolds admits a pA flow while not every 3-manifold admits an Anosov flow.

A knot $K$ in a bi-contact structure is called {\it Legendrian-transverse} if its tangents are contained in one of the contact structures and they are transverse to the other one. There is plenty of Legendrian-transverse knots in a bi-contact structure supporting an Anosov flows. Recently Hozoori \cite{Hoz2} has shown that if a flow is Anosov with orientable weak invariant foliations there are supporting bi-contact structures such that near any periodic orbit $\gamma$ there is a Legendrian-transverse knot isotopic to $\gamma$. Additionally there are many Legendrian-tranverse knots that are not isotopic to any closed orbit. Examples of these knots can be found on the torus fiber of a Anosov suspension flow or considering the circles of the Seifert fibration on the unit tangent bundle of an hyperbolic surface.  From now on we assume that $K$ is a knot that is Legendrian for $\xi_-$ and transverse to $\xi_+$. 
\begin{maintheorem}
\label{thm:1}
Let $K$ be a Legendrian-transverse knot in a bi-contact structure $(\xi_-,\xi_+)$ defining a volume preserving Anosov flow. There is a $(1,q)$-Dehn surgery along an annulus $A_0$ tangent to the flow that yields new bi-contact structures for every $q\in \mathbb{N}$.
\end{maintheorem}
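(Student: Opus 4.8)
The plan is to realize the $(1,q)$-surgery as a twisting reglue along a flow-out annulus of $K$, after putting \emph{both} contact structures into adapted normal form near that annulus, and then to certify that the reglued plane fields remain a bi-contact pair by a collar computation in which the sign of $q$ enters decisively.

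First I would construct the annulus and record its transversality properties. Since $X\in\xi_-\cap\xi_+$ and $K$ is transverse to $\xi_+$, the tangent line $TK$ can never equal $\mathbb{R}X$ (else $TK\subset\xi_+$), so $K$ is everywhere transverse to the flow. Hence for small $\tau>0$ the flow-out $A_0=\bigcup_{t\in[-\tau,\tau]}\phi_t(K)$ is an embedded annulus, tangent to $X$ by construction, with core isotopic to $K$; inside a thin solid torus $N(K)$ it is a properly embedded annulus through the core, and its two boundary arcs on $\partial N(K)$ supply a framing $\lambda$ of $K$. Because the flow is volume preserving I would then fix Mitsumatsu-type contact forms $\alpha_\pm$ adapted to the stable/unstable data, which on $N(K)$ take a clean shape.

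Next come normal forms. Using that $K$ is $\xi_-$-Legendrian, the Legendrian neighborhood theorem gives coordinates $(\theta,x,y)$ on $N(K)$, with $\theta\in\mathbb{R}/\mathbb{Z}$ along $K$, $X=\partial_x$ and $A_0=\{y=0\}$, in which $\alpha_-$ is standard and $\ker\alpha_-\supset\partial_\theta$ along $K$; in the \emph{same} chart the transversality of $K$ to $\xi_+$ forces $\alpha_+(\partial_\theta)\neq 0$. As both forms kill $X=\partial_x$, I write $\alpha_\pm=P_\pm\,d\theta+Q_\pm\,dy$, so that the contact conditions become $P_\pm\,\partial_xQ_\pm-Q_\pm\,\partial_xP_\pm\neq 0$ with \emph{opposite} signs (negative and positive contact orientations), while transversality of $\xi_-$ and $\xi_+$ is the single inequality $P_-Q_+-Q_-P_+\neq 0$. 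These three inequalities are exactly what must survive surgery.

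I would then cut $N(K)$ along $A_0$ and reglue the two copies by a diffeomorphism $\Psi_q$ supported in a collar $A_0\times(-\delta,\delta)$ which twists $q$ times, realizing the $(1,q)$-Dehn surgery relative to $\lambda$; note $\Psi_q$ must depend on the along-flow coordinate, so it does \emph{not} fix $X$, and the bi-contact flow of $M_q$ will be genuinely new. To get contact structures rather than a broken gluing I would not pull $\alpha_\pm$ back rigidly but interpolate across the collar, replacing $\alpha_\pm$ by $(1-\chi)\,\alpha_\pm+\chi\,\Psi_q^*\alpha_\pm$ for a cutoff $\chi$ in the normal direction and then correcting so the result again annihilates the new intersection line field and matches $\alpha_\pm$ outside. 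The $q$-fold twist injects into $d\alpha_\pm^q$ a term whose magnitude grows with $q$; I expect its sign to be governed by the sign of $q$, so that positivity reinforces each contact orientation, which is the reason the conclusion holds for every $q\in\mathbb{N}$ and not merely large $q$.

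The main obstacle is the \emph{simultaneity}: a single $\Psi_q$, with a single framing, must keep $\alpha_-^q$ and $\alpha_+^q$ contact \emph{and} keep them transverse, even though $\xi_-$ and $\xi_+$ carry opposite contact orientations, so the identical geometric twist enters their two contact integrals with opposite reference signs and the bookkeeping must be done carefully for each. I would first shrink the collar and choose $\chi$ so that the sign of $P_\pm\partial_xQ_\pm-Q_\pm\partial_xP_\pm$ is individually preserved, treating the Legendrian reglue of $\xi_-$ and the (admissible, positive) transverse surgery of $\xi_+$ in parallel. The genuinely delicate point is the transversality $P_-^qQ_+^q-Q_-^qP_+^q\neq 0$ inside the twisting region, where in principle the two interpolations could make $\xi_-^q$ and $\xi_+^q$ tangent. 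I would control this by observing that on $A_0$ itself the twist $\Psi_q$ acts identically on both forms, so $P_-Q_+-Q_-P_+$ is merely reparametrized and never vanishes there; a continuity and compactness argument on $[0,\delta]$, with the interpolation kept small in the normal direction, should then propagate nonvanishing into the whole collar uniformly in $q\in\mathbb{N}$, completing the verification that $(\xi_-^q,\xi_+^q)$ is a bi-contact structure.
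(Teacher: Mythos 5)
Your skeleton matches the paper's actual proof quite closely: you build the same tangent flow-out annulus $A_0=\bigcup_{t\in[-\tau,\tau]}\phi^t(K)$, cut and reglue by a shear in the $K$-direction depending on the along-flow coordinate, put both forms in an adapted frame in which each contact condition reduces to a single derivative inequality along a distinguished direction, and check the two contact conditions plus transversality. But the execution has genuine gaps exactly at the step where the proof lives. First, your gluing mechanism --- replacing $\alpha_\pm$ by $(1-\chi)\,\alpha_\pm+\chi\,\Psi_q^*\alpha_\pm$ and ``then correcting'' --- is unjustified: contactness is an open but non-convex condition, so the interpolated form need not be contact, and the unspecified correction is precisely the content of the theorem. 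The paper instead uses two \emph{explicit} corrections with two \emph{different} mechanisms (see the proof of \fullref{bicond}): for $\alpha_-$ it subtracts an exact form $dh$ with $h(v,w)=\lambda_1(w)\int_{-\delta}^{v}xf'(x)\,dx$, and the contact condition $\left|\partial h/\partial w\right|<1$ is secured by the quantitative thinness bound (\ref{6}), $0<\delta<\epsilon/\bigl(2\pi(\left|q\right|+1)\bigr)$, which is \emph{sign-independent} and works for all $q\in\mathbb{Z}$; for $\alpha_+$ it subtracts the non-exact form $\sigma=\lambda_2(w)f'(v)\,dv$, and the contact condition becomes $\partial b/\partial w+\lambda_2'(w)f'(v)>0$, which holds precisely when $f'\le 0$, i.e.\ $q>0$.

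This exposes two concrete errors in your sign and uniformity reasoning. Your expectation that ``positivity reinforces each contact orientation'' is wrong on the Legendrian side: the pulled-back error on $\alpha_-$ is $vf'(v)\,dv$, whose coefficient changes sign across $K$ because $v$ ranges over $[-\delta,\delta]$, so no choice of the sign of $q$ helps there --- only the estimate does; the sign of $q$ is decisive \emph{only} for $\xi_+$, which is exactly why the statement is restricted to $q\in\mathbb{N}$. Second, your closing ``uniformly in $q\in\mathbb{N}$'' compactness argument fails: the shear derivative is of order $\left|q\right|/\delta$, so the deformation is not small for large $q$, and (\ref{6}) forces the annulus width $\delta$ to shrink like $1/\left|q\right|$ (note also that shrinking the collar in the normal direction, as you propose, makes things \emph{worse}, since $\left|\lambda_1'\right|\sim 1/\epsilon$); fortunately the theorem only requires a per-$q$ choice of $A_0$. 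Finally, transversality of the surgered pair should not be left to a smallness argument (which again degenerates with $q$): in the paper it is automatic from the normal form of \fullref{lem:coordinate system}, where the normal direction to $A_0$ is the Reeb flow of $\alpha_-$ and $R_{\alpha_-}\subset\xi_+$ (this is where the volume-preserving hypothesis enters, via Hozoori's characterization \fullref{VPflows}), so that $\tilde{\alpha}_-$ has $dw$-coefficient $1$ while $\tilde{\alpha}_+$ has none --- a structural input your Legendrian-neighborhood chart does not provide and your interpolation scheme cannot replace.
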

\fullref{thm:1} can be extended to more general bi-contact structures. For example it also holds near a closed orbit of a general Anosov flow with orientable weak invariant foliations.

Throughout the rest of the paper we will refer to the construction of \fullref{thm:1} as {\it bi-contact surgery}.
For $q<0$ integer, the smooth plane field  distribution produced by the construction of \fullref{thm:1} is not in general a bi-contact structure.

Since being defined by a bi-contact structure is not a sufficient condition to be Anosov, it is not immediately clear if the bi-contact structures produced in \fullref{thm:1} support an Anosov flow. 
The following result shows that the Anosovity of the new flow is strictly connected to bi-contact geometry.

\begin{maintheorem}
\label{thm:2}

Let $K$ be a Legendrian-transverse knot in a bi-contact structure defining a volume preserving Anosov flow. If a bi-contact surgery yields a bi-contact structure it yields an Anosov flow. 
\end{maintheorem}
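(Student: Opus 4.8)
The plan is to combine two facts: a flow carried by a bi-contact structure is automatically projectively Anosov, and a projectively Anosov flow that preserves a volume form is in fact Anosov. The hypothesis of the theorem supplies the first ingredient directly, while the second is inherited from the original flow through the surgery.

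First I would invoke Mitsumatsu's observation (recalled in the introduction): the generator $X'$ of the flow on the surgered manifold $M'$, being the intersection field of the new bi-contact pair $(\xi'_-,\xi'_+)$, carries a dominated splitting on the quotient bundle $Q=TM'/\langle X'\rangle$. Thus, under the hypothesis that the surgery yields a bi-contact structure, $X'$ is projectively Anosov: there is a continuous $D\phi'_t$-invariant splitting $Q=E\oplus F$ with $F$ dominating $E$, say $\mu(t)/\nu(t)\le Ce^{-\lambda t}$, where $\mu(t),\nu(t)$ denote the expansion factors along $E,F$.

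Next I would establish that $X'$ preserves a volume form. Because the annulus $A_0$ is tangent to the flow and the original flow is volume preserving, the cut-and-reglue of \fullref{thm:1} can be arranged so that the product volume is matched across the gluing region; away from a neighborhood of $A_0$ the flow and the volume are unchanged. Hence there is an $X'$-invariant volume form $\Omega'$ on $M'$. Contracting with $X'$ produces $\omega=\iota_{X'}\Omega'$, which is invariant, since $\mathcal{L}_{X'}\omega=\iota_{X'}\mathcal{L}_{X'}\Omega'=0$, and which descends to a nondegenerate $D\phi'_t$-invariant area form on $Q$ because $\iota_{X'}\omega=0$.

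Finally I would run the determinant argument on $Q$. Invariance of the area form forces the Jacobian of $D\phi'_t$ on $Q$ to be identically $1$; as $E$ and $F$ are continuous transverse line fields over the compact manifold $M'$, the angle between them is bounded away from $0$ and $\pi$, so $\mu(t)\nu(t)$ is pinched between positive constants. Combining with domination gives $\mu(t)^2\lesssim e^{-\lambda t}$ and $\nu(t)^2\gtrsim e^{\lambda t}$, so $E$ is uniformly contracted and $F$ uniformly expanded; together with the flow direction this is exactly an Anosov splitting. I expect the main obstacle to be the volume-preservation step: one must check that the specific gluing of \fullref{thm:1}, and not merely some abstract flow on $M'$, carries an invariant volume form. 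Here the flow-tangency of $A_0$ is essential, since it guarantees that the surgery shears along, rather than across, the orbits, so that no volume is created or destroyed in the regluing.
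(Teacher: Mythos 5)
There is a genuine gap, and it sits exactly where you predicted: the volume-preservation step. Your first and third steps are fine --- bi-contact implies projectively Anosov by Mitsumatsu, and the determinant argument (an invariant area on $Q=TM/\langle X\rangle$ forces $\mu(t)\nu(t)$ to be pinched between positive constants, which together with domination yields uniform contraction of $E$ and expansion of $F$, hence Anosovity of the flow) is a correct and classical implication. But this is a genuinely different route from the paper, which never mentions volume of the \emph{new} flow at all: the paper's proof is a one-line application of Hozoori's criterion (\fullref{prop:Hoz} via \fullref{cond:Anosov}), using that the construction builds in the membership $R_{\tilde{\alpha}_-}\subset\ker\tilde{\alpha}_+$ --- indeed in the surgery coordinates $R_{\tilde{\alpha}_-}$ is collinear to $\partial/\partial w$ and $\tilde{\alpha}_+=ds-(b+\lambda_2 f')\,dv$ annihilates it, so whenever $\tilde{\alpha}_+$ is contact the new Reeb field is dynamically positive and the flow is Anosov. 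The hypothesis that the \emph{original} flow is volume preserving is used only to guarantee, via \fullref{VPflows}, that a supporting pair with $R_{\alpha_-}\subset\xi_+$ exists near $K$ before surgery; it is not claimed, and should not be expected, that volume preservation survives the surgery.

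Concretely, your step 2 fails because the surgered vector field $\tilde{X}$ is \emph{not} the old flow reglued along $A_0$: it is defined as the intersection of the deformed kernels $\ker\tilde{\alpha}_-\cap\ker\tilde{\alpha}_+$, and the deformations $dh$ and $\sigma$ perturb the orbits throughout a full three-dimensional neighborhood $N$ of the annulus --- the paper stresses precisely this point in \fullref{sec:Relations} when contrasting with Goodman surgery, whose orbits are unchanged off the transverse annulus. So the heuristic ``the shear is along orbits, hence no volume is created or destroyed'' only addresses the gluing map, not the divergence of $\tilde{X}$ inside $N$; to run your argument you would have to exhibit a volume form, agreeing with the original one outside $N$, with respect to which the perturbed $\tilde{X}$ is divergence-free, and nothing in the construction supplies this. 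In the paper's own language this is visible at the level of Reeb fields: by \fullref{VPflows}, volume preservation is equivalent to the \emph{two-sided} condition $R_{\alpha_-}\subset\ker\alpha_+$ and $R_{\alpha_+}\subset\ker\alpha_-$, whereas the surgery (\fullref{bicond}) only produces the first of these; proving the symmetric condition for $(\tilde{\alpha}_-,\tilde{\alpha}_+)$ would be a strictly stronger statement than the theorem. The repair is to drop the volume argument for the new flow and instead use the one-sided Reeb membership directly with Hozoori's criterion --- which is exactly the paper's proof.
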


In particular, a $(1,q)$-bi-contact surgery on a bi-contact structure defining a volume preserving Anosov flow yields new Anosov flows for every $q\in \mathbb{N}$.
 
The construction of \fullref{thm:1} naturally fits into the framework set up by Hozoori in \cite{Hoz} where is given a symplectic characterization of the bi-contact structures that define Anosov flows. Using this characterization we give a proof of  \fullref{thm:2} that relies only on the properties of the Reeb dynamics of an underlying bi-contact structure \footnote{In particular we do not use the {\it cone field criterion of hyperbolicity.}}. 
\\

The construction of \fullref{thm:1} 
gives new insight into the properties of the flows produced by Goodman surgery in virtue of the following equivalence.

\begin{maintheorem}
\label{thm:2.1}
There are choices such that a flow generated by a bi-contact $(1,q)$-surgery on a volume preserving Anosov flow $\phi^t$ is orbit equivalent to a flow constructed by $(1,q)$-Goodman surgery on $\phi^t$.

\end{maintheorem}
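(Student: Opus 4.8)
The plan is to realize both the Legendrian-transverse $(1,q)$-surgery and the $(1,q)$-Goodman surgery as a cut-and-reglue operation along the \emph{same} flow-tangent annulus, with the \emph{same} integer twist, and then to use that such an operation determines the surgered flow, up to orbit equivalence, by only two pieces of data: the isotopy class of the annulus rel the flow and the integer $q$ (this is precisely how the surgered flow is described in \cite{HaTh, Goo}).

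First I fix the choices promised in the statement. Let $\gamma$ be the closed orbit along which the Goodman surgery is performed and let $X$ generate $\phi^t$. By Hozoori's theorem \cite{Hoz2} I may choose a supporting bi-contact structure $(\xi_-,\xi_+)$ on a thin solid-torus neighborhood $V$ of $\gamma$ together with a Legendrian-transverse knot $K\subset V$ isotopic to $\gamma$, with $TK\subset\xi_-$ and $TK$ transverse to $\xi_+$. Let $\mathcal{F}^{s}$, with tangent plane $E^{ws}=\mathbb{R}X\oplus E^{s}$, be the weak invariant foliation to which $\xi_-$ is asymptotic. Shrinking $V$, I arrange that $\xi_-$ is $C^0$-close on $V$ to $E^{ws}$; in particular $K$ may be taken $C^0$-close to the weak manifold $W(\gamma)\subset V$ tangent to $E^{ws}$. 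I then choose the Goodman surgery to be performed inside $V$ along an annulus lying in this same $W(\gamma)$.

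Next I compare the two surgery annuli. Since $TK$ is everywhere transverse to the line field $\mathbb{R}X$ (if $TK=\mathbb{R}X$ at a point then $TK\subset\xi_+$, contradicting transversality to $\xi_+$), the annulus $A_0$ of \fullref{thm:1} is a flow-out $A_0=\bigcup_{t\in[0,\varepsilon]}\phi^t(K)$; it is tangent to $X$ and has boundary $K\cup\phi^{\varepsilon}(K)$. Goodman's annulus $A_G\subset W(\gamma)$ is likewise a flow-out of a transversal to $\phi^t$ inside the flow-invariant surface $W(\gamma)$. Because $K$ is $C^0$-close to $W(\gamma)$ and both annuli are flow-outs of curves isotopic to $\gamma$, I claim $A_0$ is isotopic to $A_G$ through annuli tangent to the flow and supported in $V$: a flow-tangent annulus is determined by its transversal, and the transversals of $A_0$ and $A_G$ are isotopic inside a cross-section of $V$.

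The third and decisive step is to match the integer twists. Cutting $M$ along either annulus and regluing by a homeomorphism that is the identity near the ends and inserts $q$ full twists---measured against a chosen longitude---produces the surgered flow; Goodman measures the twist against the framing induced by $W(\gamma)$, while the Legendrian-transverse surgery measures it against the framing that $\xi_-$ induces along $K$. I expect this to be the main obstacle: I must check that these two normalizations agree, i.e. that the contact (Legendrian) framing of $K$ and the weak framing of $\gamma$ differ by zero twists. This follows from the convergence $\xi_-\to E^{ws}$ arranged in the first step, which forces the Legendrian framing along $K$ to converge to the weak framing along $\gamma$; framings take values in a discrete set, so once they are $C^0$-close they coincide, and the twist integer is the same $q$ in both constructions rather than being shifted by a framing constant. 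Finally I assemble the orbit equivalence. Both surgered flows agree with $\phi^t$ on $M\setminus V$, and inside $V$ each is obtained from $\phi^t|_V$ by the same cut-and-reglue data: isotopic flow-tangent annuli and the same integer $q$. A homeomorphism of $V$ that is the identity on $\partial V$ and carries the orbit foliation of one twisted model to that of the other---assembled from the annulus isotopy of the second step and the twist match of the third---extends by the identity on $M\setminus V$ to the desired orbit equivalence between the flow produced by the Legendrian-transverse $(1,q)$-surgery and the flow produced by the $(1,q)$-Goodman surgery on $\phi^t$.
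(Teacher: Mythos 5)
Your reduction fails at its central step: you realize Goodman surgery as a cut-and-reglue along a flow-tangent annulus $A_G\subset W(\gamma)$ inside the weak invariant leaf, but Goodman's surgery annulus is \emph{transverse} to the flow, not tangent to it. The whole content of the theorem is precisely the translation between these two pictures: the Legendrian-transverse surgery twists along the tangent annulus $A_0=\bigcup_{t\in[-\tau,\tau]}\phi^t(K)$, while Goodman surgery twists along a transverse annulus. The paper's proof of \fullref{equivalence} builds a flow-box $N$ bounded by $A_0$, $A_\epsilon$, and the transverse annuli $C_{in}$, $C_{out}$ (obtained by flowing $\partial A_0$ along $R_{\alpha_-}$), and then compares the foliation $\mathcal{F}$ on $C_{out}$ obtained by projecting the $w$-curves of $C_{in}$ along the old flow $X$ with the foliation $\tilde{\mathcal{F}}$ obtained by projecting along the surgered flow $\tilde{X}$: the shear $F(s,v)=(s+f(v),v)$ changes the tangent foliation of $A_0$ from the one directed by $\partial/\partial v$ to the one directed by $f'(v)\,\partial/\partial s+\partial/\partial v$, so $\tilde{\mathcal{F}}$ differs from $\mathcal{F}$ by a full $(1,-q)$-Dehn twist, and the surgered flow is orbit equivalent to the flow obtained by cutting along $C=C_{out}$ and regluing with a $(1,q)$-twist, i.e.\ a Goodman surgery. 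Note in particular the sign reversal (a negative twist on the tangent annulus corresponds to a positive twist on the transverse one), which your framing-matching argument in the third step does not see at all: matching the Legendrian framing of $K$ against the weak framing of $\gamma$ is the wrong comparison, since the issue is not a framing shift between two knots but the holonomy effect of a tangent-annulus twist measured on a transverse annulus.

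Two further problems compound this. First, your appeal to a uniqueness principle---that a cut-and-reglue along a flow-tangent annulus determines the surgered flow up to orbit equivalence from the isotopy class of the annulus and the integer $q$---is unsupported by \cite{HaTh} and \cite{Goo}, which treat transverse annuli and tori where the flow glues directly because $G_*\partial/\partial t=\partial/\partial t$; along a tangent annulus a naive cut-and-reglue does not even canonically produce a flow (the vector field $X$ is tangent to $A_0$ and is \emph{not} preserved by the shear), which is why the paper produces $\tilde X$ through the deformation of the pair $(\alpha_-,\alpha_+)$, a genuinely three-dimensional perturbation supported near $A_0$, unlike Goodman's construction where the orbits are unchanged off the transverse annulus. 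Second, your setup restricts to a knot $K$ isotopic to a closed orbit $\gamma$ via Hozoori's theorem; the statement concerns an arbitrary Legendrian-transverse knot in a volume preserving flow (where \fullref{VPflows} supplies the needed condition $R_{\alpha_-}\subset\xi_+$ globally), and the transverse Goodman annulus $C$ is \emph{constructed} from the flow-box as $C_{out}$ rather than taken near a pre-existing closed orbit.
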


Goodman's operation consists in cutting an Anosov flow along a transverse annulus $C$ in a neighborhood of a closed orbit and gluing back the two sides of the cut adding a $(1,q)$-Dehn twist. Goodman proved that such an annulus comes with a {\it preferred direction} in the sense that there is a sign of the twist that always produces hyperbolicity. For example, along an annulus with positive preferred direction a $(1,q)$-Goodman surgery produces an Anosov flow for every integer $q>0$. 
   
For a fixed transverse annulus with positive preferred direction it is not known in general if a $(1,q)$-Goodman surgery with $q<0$ integer generates hyperbolicity. \fullref{thm:2} and \fullref{thm:2.1} allows us to study the Anosovity of the flows generated by surgeries performed in the direction opposite to the preferred one using tools of contact geometry.
\\

Let $C$ be an embedded transverse annulus with positive preferred direction centered on a Legendrian-transverse knot $K$ (non necessarily isotopic to a closed orbit). To a neighborhood $N$ of $C$ we associate a positive real number $k$ depending just on the behaviour of the contact structure $\xi_+$ on $\partial N$. This number can be interpreted as the {\it slope} of the {\it characteristic foliation} induced by $\xi_+$ on $\partial N$. For a volume preserving Anosov flow we have the following.

\begin{maintheorem}
\label{thm:3}
Let $k>0$ be the slope of the characteristic foliation induced by $\xi_+$ on $\partial N$. For every $q>-k$ a $(1,q)$-Goodman surgery along $C$ produces an Anosov flow.
\end{maintheorem}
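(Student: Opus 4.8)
The plan is to deduce \fullref{thm:3} from the three preceding main theorems, the only genuinely new ingredient being a sharp version of the bi-contact persistence of \fullref{thm:1} valid for negative twists down to $-k$. First I would reduce the statement to a purely contact-geometric condition. By \fullref{thm:2.1}, for a suitable choice of data the flow obtained by a $(1,q)$-Goodman surgery along $C$ is orbit equivalent to the flow obtained by a Legendrian-transverse $(1,q)$-surgery centered on $K$; since the Anosov property is carried across the orbit equivalence, it suffices to show that for every $q>-k$ the Legendrian-transverse $(1,q)$-surgery produces a \emph{bi-contact} structure. \fullref{thm:2} then upgrades this to Anosovity, and \fullref{thm:2.1} transports the conclusion back to the Goodman flow. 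Thus the content of the theorem is to extend the bi-contact persistence of \fullref{thm:1}, established there for $q\in\mathbb{N}$, into the full range $q>-k$.

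Next I would localize and set up an explicit model. Outside the surgery region $N$ the two contact forms $\alpha_\pm$, with $\xi_\pm=\ker\alpha_\pm$, are unchanged, so the two contact conditions persist there automatically, and everything must be checked inside $N$. I would work in coordinates on $N$ adapted to the flow-tangent annulus $A_0$ and realize the $(1,q)$-twist by a shear $\psi_q$ supported in a collar, interpolated by a monotone cutoff running from $0$ to $q$. Because $K$ is Legendrian for $\xi_-$, the induced modification of $\alpha_-$ is mild and one checks directly that $\alpha_-\wedge d\alpha_-$ keeps its sign for every $q$ in the relevant range; the delicate condition is the positive contact condition for the transverse structure $\xi_+$, and this is exactly what the number $k$ governs.

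The main step, and the principal obstacle, is to compute $\alpha_+\wedge d\alpha_+$ after the twist and to read off the threshold. I would express $k$ through the behaviour of $\xi_+$ on $\partial N$: the characteristic foliation $T\partial N\cap\xi_+$ is, up to isotopy, linear of slope $k>0$, and the shear $\psi_q$ shifts this slope to $k+q$. The sign of $\alpha_+\wedge d\alpha_+$ throughout $N$ is controlled by this shifted slope, and the positive contact condition for $\xi_+$ survives precisely when $k+q>0$, i.e. $q>-k$. The subtlety to handle with care is that the contact condition is needed in the \emph{interior} of the interpolating collar, not merely on $\partial N$: the cutoff must be arranged so that the monotone twisting never drives $\alpha_+\wedge d\alpha_+$ through zero inside $N$, which is possible exactly as long as the total twist obeys $q>-k$. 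Granting this computation, for every $q>-k$ the pair $(\xi_-,\xi_+)$ produced by the Legendrian-transverse surgery is a bi-contact structure, and the reductions above yield that the $(1,q)$-Goodman surgery along $C$ produces an Anosov flow.
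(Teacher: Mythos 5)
Your overall architecture coincides with the paper's: \fullref{thm:3} is indeed proved by establishing the statement for the Legendrian-transverse surgery (this is \fullref{SLOPE} in the body) and transporting it to Goodman surgery through the orbit equivalence of \fullref{equivalence}, with Anosovity certified by keeping $R_{\alpha_-}$ tangent to the new positive contact structure and invoking Hozoori's criterion (\fullref{cond:Anosov}). Where you diverge is in the implementation of the key step. You propose to keep the glued form $\tilde{\alpha}_+$ of the construction of \fullref{bicond} and verify $\tilde{\alpha}_+\wedge d\tilde{\alpha}_+>0$ directly, with a monotone cutoff interpolating the twist. The paper instead discards $\xi_+$ on the interior of the flow-box $N$ altogether and \emph{extends} the contact structure defined on $M\setminus N$ across $N$ as a family of planes rotating positively along the $w$-curves (the flowlines of $R_{\alpha_-}$); the hypothesis $q>-k$ enters exactly where you predict, namely in guaranteeing that after the shear the projection along the $w$-curves of the characteristic foliation on $A_{\epsilon}$ intersects the characteristic foliation on $A_0$ positively, which makes the existence of the positively rotating extension immediate. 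Your version instead has to produce an explicit deformation dominated by the rotation of $\xi_+$, and that is where it is incomplete.

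Concretely, the sentence asserting that the cutoff ``can be arranged so that the monotone twisting never drives $\alpha_+\wedge d\alpha_+$ through zero inside $N$, which is possible exactly as long as $q>-k$'' is precisely the content of the theorem, stated without an argument. With the deformation of (\ref{deformation}), $\tilde{\alpha}_+=ds-\bigl(b(v,w)+\lambda_2(w)\,f'(v)\bigr)\,dv$, the contact condition is $\partial b/\partial w+\lambda_2'(w)f'(v)>0$; for $q<0$ the sign of $f'$ flips, so the term $\lambda_2'(w)f'(v)$ is now negative somewhere and must be absorbed by $\partial b/\partial w$. In particular no interpolation ``supported in a collar'' of fixed small thickness can work: if it did, combined with \fullref{thm:2} and \fullref{equivalence} it would produce Anosov flows from negative surgeries on arbitrarily thin annuli, contradicting the Foulon--Hasselblatt--Vaugon obstruction recalled in \fullref{prop:FH}(2). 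The twist must be spread over the entire thickness of $N$, and the total rotation of $\xi_+$ available across $N$ is exactly what the slope $k$ of the characteristic foliation on $\partial N$ measures; this bookkeeping is what your collar phrasing elides and what the paper's extension argument (together with the $s$-dependent shear $f_s$ it allows) handles. Finally, for the appeal to \fullref{thm:2} to be legitimate you must also note that the deformation of $\alpha_+$ has no $dw$-component, so that $\partial/\partial w=R_{\tilde{\alpha}_-}$ remains in $\ker\tilde{\alpha}_+$: the proof of \fullref{thm:2} uses this tangency, which holds ``by construction'' only if the construction is set up to preserve it.
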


Let $C_q$ be a transverse annulus with positive preferred direction such that a $(1,q)$-Goodman surgery for a fixed $q<0$ produces an Anosov flow. Foulon, Hasselblatt and Vaugon show in \cite{FHV2} that for a sufficiently thin annulus $C\subset C_{q}$ centered on $K$, a $(1,q)$-Goodman surgery along $C$ produces a flow that is not Anosov. We show that the situation radically changes after a local perturbation of the flow. 

\begin{maincorollary} 
\label{cor:4}
Suppose that $C_{q}$ is an annulus with positive preferred direction transverse to a volume preserving Anosov flow $\phi^t$. Suppose also that a $(1,q)$-Goodman surgery along $C_q$ produces an Anosov flow for a fixed $q<0$.
\begin{enumerate}
\item There is an arbitrarily thin annulus $C\subset C_{q}$ and a flow $\phi_t'$ orbit equivalent to $\phi_t$ such that a $(1,q)$-Goodman surgery along $C$ generates an Anosov flow. 
\item There is a $C^1$-path of Anosov flows connecting $\phi_t$ and $\phi_t'$.
\item The flow-lines of $\phi'_t$ coincide with the ones of $\phi_t$ outside an arbitrarily small flow-box neighborhood $N$ of $C_{q}$.
\end{enumerate}
\end{maincorollary}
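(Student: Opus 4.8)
The plan is to deduce \fullref{cor:4} from \fullref{thm:3} by replacing $\phi_t$ with a carefully reparametrized, orbit equivalent flow whose supporting contact structure is steeper along the thin annulus. It suffices to produce a volume preserving Anosov flow $\phi'_t$, orbit equivalent to $\phi_t$ and with the same oriented flow-lines outside the flow-box $N$, for which the characteristic foliation induced by the associated $\xi'_+$ on the boundary of a thin neighborhood $N_C\subset N$ of $C$ has slope $k'>-q$; granting this, \fullref{thm:3} applied to $\phi'_t$ and $C$ immediately yields that the $(1,q)$-Goodman surgery along $C$ is Anosov. By \cite{FHV2}, for the unperturbed flow and a sufficiently thin $C$ the surgery is not Anosov, so the contrapositive of \fullref{thm:3} forces $q>-k_C$ to fail; the whole point is therefore to enlarge this slope without leaving the orbit equivalence class of $\phi_t$.

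For the perturbation I would use a time reparametrization. Writing $X$ for the generator of $\phi_t$, I would fix a smooth $f>0$ with $f\equiv 1$ outside $N$ and let $\phi'_t$ be the flow of $fX$. Since $f$ is supported in $N$, the flows $\phi_t$ and $\phi'_t$ share the same oriented orbits, so their flow-lines agree outside $N$ and $\phi'_t$ is orbit equivalent to $\phi_t$; as $f$ is bounded above and below, $\phi'_t$ is again Anosov. Finally, a direct computation from $\mathcal L_X\mu=0$ gives $\mathcal L_{fX}(\tfrac1f\mu)=0$, so $\phi'_t$ preserves the volume $\tfrac1f\mu$ and every hypothesis of \fullref{thm:3} survives. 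The whole freedom of the construction is thus packaged into the choice of $f$ on $N$.

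It then remains to arrange $k'>-q$. The reparametrization fixes the weak foliations but shears the strong bundles along the flow direction: a vector in $E^u$ acquires an $X$-component under $D\phi'_t$, so the new unstable bundle, and with it the contact structure $\xi'_+$ asymptotic to it, is tilted relative to $\xi_+$ by an amount governed by $f$ along the orbits in $N$. Tracking this tilt out to $\partial N_C$ changes the slope of the characteristic foliation, and I expect that making $f$ spike in $N$ drives this slope up.

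The hard part will be exactly this quantitative step: checking that the shear moves the slope in the correct direction and that its magnitude is genuinely unbounded as $f$ is made steep, so that $k'$ can be pushed past the fixed positive value $-q$ even when $C$, and hence the region in which the shear is concentrated, is arbitrarily thin. Once such an $f$ is produced, taking $\phi'_t$ as above and invoking \fullref{thm:3} closes the argument, while the coincidence of oriented orbits outside $N$ gives the final clause of the statement.
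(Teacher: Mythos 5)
Your reduction to \fullref{thm:3} targets the right statement, but the perturbation you chose cannot deliver it, and the obstruction is structural rather than the quantitative difficulty you flag at the end. A time reparametrization $fX$ with $f>0$ has exactly the same oriented orbits as $X$ on \emph{all} of $M$, and Goodman surgery consumes only the oriented orbit foliation near the annulus $C$ together with the shear: in flow-box coordinates $X=\partial/\partial t$, the gluing preserves $\partial/\partial t$, and (taking $f$ invariant under the shear on the cut, e.g.\ independent of $s$ there, which your construction permits; otherwise the surgered vector field built from $fX$ is not even well defined) the result of surgering $fX$ is $\tilde f\tilde X$, a positive time change of the vector field $\tilde X$ obtained by surgering $X$. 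Since Anosovity is invariant under positive $C^1$ time change, the $(1,q)$-surgery along the thin annulus $C$ applied to your $\phi'_t$ is Anosov if and only if it is Anosov for $\phi_t$ --- and by \cite{FHV2} it is not, once $C$ is thin enough. The same negative result shows your ``hard part'' is impossible, not merely hard: if some supporting bi-contact structure for a reparametrization of $\phi_t$ had slope $k'>-q$ on the boundary of a thin neighborhood of $C$, then \fullref{thm:3} would force the surgered flow to be Anosov, contradicting \cite{FHV2}. The heuristic about shearing the strong bundles does not rescue this: both $\xi_-$ and $\xi_+$ contain the flow direction and are pinched between the weak foliations, which a time change leaves untouched.

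The reason \fullref{cor:4} explicitly allows the flow-lines of $\phi'_t$ to differ from those of $\phi_t$ inside $N$ is precisely that the perturbation must change the orbit foliation near $C$; the paper achieves this by deforming the bi-contact structure, not the time parameter. By \fullref{equivalence}, the Anosov flow produced by the $(1,q)$-surgery along the thick annulus $C_q$ is realized by a Legendrian-transverse surgery whose deformation of $\xi_+$, the $1$-form $\sigma=\lambda_2(w)\,f'(v)\,dv$, is supported in the flow-box $N$. Shrinking the support of the bump function $\lambda_2$, i.e.\ isotoping $\xi_+$ along the flow-lines of $R_{\alpha_-}$ with support in $N$ (\fullref{LS} together with \fullref{LL}), concentrates the rotation of $\xi_+$ into an arbitrarily thin slab around $C$ while staying in the orbit equivalence class. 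This isotopy moves the intersection line field $\xi_-\cap\xi_+$, hence yields a flow $\phi'_t$ whose orbits agree with those of $\phi_t$ only outside $N$ and for which the slope of the characteristic foliation on the boundary of the thin neighborhood of $C$ exceeds $-q$, so \fullref{SLOPE} applies and the surgery along $C$ is Anosov. If you replace your reparametrization with this compactly supported isotopy of $\xi_+$, the remainder of your outline goes through essentially unchanged.
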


The proof of \fullref{cor:4} uses the interaction between the structural stability of an Anosov flow and the flexibility of its underlying bi-contact structure to construct $C^1$ paths of Anosov flows.
\\

Foulon, Hasselblatt and Vaugon furthermore show in \cite{FHV2} that in any Anosov flow for a fixed embedded transverse annulus $C$ with positive preferred direction there is a large enough $q<0$ such that a $(1,q)$-Goodman surgery produces a flow that is not Anosov. Let $\phi^t$ be a volume preserving Anosov flow with weak orientable invariant foliations. If there is an embedded {\it quasi-transverse} annulus $ C_{-\infty}$ bounded on one side by a Legendrian-transverse knot $K$ and on the other side by a closed orbit $\gamma$ we have the following. 

\begin{maincorollary}
\label{cor:5}
There is a nested sequence $\{C_r\}_{r\in \mathbb{Z}^-}$ of transverse annuli with positive preferred direction bounded by $K$ on one side and such that a $(1,r)$-Goodman surgery along $C_r$ yields an Anosov flow. Moreover we have $$ C_{-\infty}=\overline{\bigcup_{r\in \mathbb{Z}^-}C_r}.$$ 
\end{maincorollary}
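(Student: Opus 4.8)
The plan is to exploit the slope criterion of \fullref{thm:3}: a $(1,r)$-Goodman surgery along a transverse annulus with positive preferred direction produces an Anosov flow as soon as $r > -k$, where $k>0$ is the slope of the characteristic foliation that $\xi_+$ induces on $\partial N$. The entire construction then rests on a single analytic fact: as a transverse annulus sweeps out toward the quasi-transverse annulus $C_{-\infty}$, this slope must grow without bound.

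First I would foliate $C_{-\infty}$ by a family of simple closed curves $\{\kappa_s\}_{s\in[0,1]}$ parallel to the two boundary components, with $\kappa_0=K$ and $\kappa_1=\gamma$, and let $C(s)\subset C_{-\infty}$ be the sub-annulus bounded by $K$ and $\kappa_s$. For $s<1$ the curve $\kappa_s$ lies in the transverse interior of $C_{-\infty}$, so $C(s)$ is an embedded transverse annulus, and it inherits the positive preferred direction from the Legendrian-transverse boundary $K$ as in \fullref{thm:1}. The family $\{C(s)\}_{s<1}$ is nested and increasing, with $\bigcup_{s<1}C(s)=C_{-\infty}\setminus\gamma$.

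The key step is to show that the slope $k(s)$ of the characteristic foliation of $\xi_+$ on $\partial N(s)$ satisfies $k(s)\to\infty$ as $s\to 1$. This is exactly where the tangency of $C_{-\infty}$ along the closed orbit $\gamma$ enters: near $\gamma$ the bi-contact structure rotates with the hyperbolic Poincar\'e return, so the weak invariant foliations, and hence the asymptotic contact planes $\xi_\pm$, wind an unbounded number of times relative to the boundary framing of $N(s)$ as $\kappa_s\to\gamma$. Concretely I would linearize the flow in a tubular neighborhood of $\gamma$, read off the winding of the characteristic foliation of $\xi_+$ from the hyperbolic eigendata of the return map, and check that the induced slope diverges as the distance from $\kappa_s$ to $\gamma$ tends to $0$. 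This divergence is dual to the Foulon--Hasselblatt--Vaugon phenomenon: for each \emph{fixed} transverse annulus $k(s)$ is finite, which is precisely why a sufficiently negative surgery fails on a fixed annulus, yet $k(s)$ is unbounded along the degenerating family. I expect this slope estimate to be the main obstacle, since it requires controlling the characteristic foliation uniformly near the tangency locus.

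Granting the blow-up, the corollary follows. For each $r\in\mathbb{Z}^-$ I would use $k(s)\to\infty$ to choose an increasing sequence $s_{-1}<s_{-2}<\cdots\to 1$ with $k(s_r)>|r|=-r$, and set $C_r:=C(s_r)$. By \fullref{thm:3} a $(1,r)$-Goodman surgery along $C_r$ yields an Anosov flow, and the $C_r$ are nested because $\{C(s)\}$ is increasing in $s$. Finally, since $k(s_r)>|r|\to\infty$ forces $s_r\to 1$, the union $\bigcup_{r}C_r=\bigcup_r C(s_r)$ exhausts $C_{-\infty}\setminus\gamma$, and taking closures gives $C_{-\infty}=\overline{\bigcup_{r\in\mathbb{Z}^-}C_r}$.
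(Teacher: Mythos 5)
Your proposal is correct and follows essentially the same route as the paper: the paper also deduces this corollary from the slope criterion of \fullref{thm:3} together with the divergence of the slope as the annuli approach $\gamma$, constructing the nested family $C_r$ as the cores of the expanding flow-box neighborhoods $N_q$ swept out by flowing $K$ along $R_{\alpha_-}$ toward the closed orbit (\fullref{Rem} and the corollaries following \fullref{equivalence}), with $C_{-\infty}$ arising exactly as the closure of their union. The only difference is in how the crucial blow-up $k\to\infty$ is supported: the paper reads it off from the rotation of $\xi_+$ along the Reeb flowlines of $R_{\alpha_-}$ as the top annulus $A_{\epsilon}$ of the flow-box degenerates onto $\gamma$ at $\epsilon=\epsilon_{\gamma}$ (made explicit in the geodesic-flow model of \fullref{Lambda}), stating it quite tersely rather than via your proposed linearization at $\gamma$, so your identification of that estimate as the main obstacle matches where the paper itself carries the least detail.
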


In particular, given an annulus $C_r$ of the sequence, we obtain the annulus $C_{r-1}$ extending $C_r$ towards the closed orbit $\gamma$.  
\\

We use \fullref{thm:3} and its corollaries to study surgeries in the context of {\it contact Anosov flows}, that are Anosov flows preserving a contact form. The condition of being contact Anosov has a number of remarkable geometric consequences (see \cite{Ham}, \cite{Liv} and \cite{Fan}) including exponential decay of correlations. In the context of $3$-manifold, Barbot has shown that every contact Anosov flow is {\it skewed} $\mathbb{R}$-covered \footnote{An Anosov flow is {\it $\mathbb{R}$-covered} if the stable (or the unstable) weak foliation lifts to a foliation in the universal cover which has leaf space homeomorphic to $\mathbb{R}$.}. The relation between these two classes of flows is expected to be stronger.

\begin{conjecture}[Barbot--Barthelmé]
\label{conj1}
If $\phi^t$ is a (positively) skewed $\mathbb{R}$-covered Anosov flow, then it is orbit equivalent to a contact Anosov flows.\footnote{Marty \cite{Marty2} posted a proof of this statement after the completion of the present work. }
\end{conjecture}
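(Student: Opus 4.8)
The plan is to realize an arbitrary skewed $\mathbb{R}$-covered flow as the output of surgeries on a contact model and then to carry those surgeries out inside the contact category, using the machinery of \fullref{thm:2}, \fullref{thm:3} and their corollaries. The anchor of the argument is that the geodesic flow of a closed hyperbolic surface is contact Anosov---it is the Reeb flow of the canonical contact form on the unit tangent bundle---and is itself skewed $\mathbb{R}$-covered, so its orbit-equivalence class already contains a contact representative and can be used as a base.

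First I would appeal to the structure of the bifoliated orbit space. A (positively) skewed $\mathbb{R}$-covered flow carries a canonical period map on its orbit space exchanging the weak stable and weak unstable foliations (Barbot--Fenley), and the recent analysis of Bonatti--Iakovoglou and Marty on $\mathbb{R}$-covered flows isolates exactly which Goodman twists on a geodesic flow remain inside the skewed $\mathbb{R}$-covered class. Using this I would seek a \emph{realization} statement: every skewed $\mathbb{R}$-covered flow on $M$ is orbit equivalent to a flow obtained from a geodesic flow by finitely many Goodman surgeries along periodic orbits, each performed with a twist in the admissible range that keeps the flow skewed $\mathbb{R}$-covered.

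Granting realization, the rest is to run each surgery contactly. By \fullref{thm:2.1} every such Goodman surgery is presented as a Legendrian-transverse surgery along an annulus tangent to the flow, and by \fullref{thm:2} the surgered flow is Anosov the moment the surgery yields a bi-contact structure. \fullref{thm:3} makes this quantitative: for each $q>-k$, with $k$ the slope of the characteristic foliation of $\xi_+$ on the flow-box boundary, the $(1,q)$-surgery is Anosov, and---tracking the contact form rather than only the hyperbolic splitting---when the base is contact Anosov the surgered bi-contact pair still admits a preserved contact form, so the output is again contact Anosov. \fullref{cor:4} and \fullref{cor:5} would then be invoked to widen the admissible $q$-range by an arbitrarily small perturbation inside the orbit-equivalence class, so that every twisting number demanded by the realization step falls inside the contact-Anosov window.

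The hard part will be the realization step: proving that \emph{every} skewed $\mathbb{R}$-covered flow, not merely those visibly assembled from surgeries, lies in the orbit-equivalence class of a surgered geodesic flow with admissible twists; this is precisely where the global orbit-space arguments of Bonatti--Iakovoglou and Marty are essential and where potential exotic, non-surgery flows must be excluded or absorbed. A secondary obstacle is propagating the contact condition through iterated surgeries: one must check that the slope $k$ governing \fullref{thm:3} stays favorable after each step, so that contact Anosovity---not merely Anosovity---is preserved at every stage. Should the realization step resist this surgical approach, the fallback is an intrinsic construction, using the period symmetry of the orbit space to balance the defining forms $\alpha_-$ and $\alpha_+$ of a bi-contact pair into a single $\pi_1(M)$-equivariant contact form whose Reeb field is a reparametrization of $\phi^t$, and then verifying Hozoori's symplectic criterion directly.
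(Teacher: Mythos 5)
The statement you set out to prove is not proved in the paper at all: it appears as \fullref{conj1}, an open conjecture of Barbot--Barthelm\'e, with a footnote recording that Marty \cite{Marty2} proved it only after this work was completed. So there is no proof in the paper to compare against, and your proposal does not close the gap either --- it is a program whose central step is missing. The realization step, namely that \emph{every} (positively) skewed $\mathbb{R}$-covered Anosov flow is orbit equivalent to one obtained from a geodesic flow by finitely many Goodman surgeries with admissible twists, is not supplied by the results of Bonatti--Iakovoglou \cite{BoIo} or Marty \cite{Ma} that you invoke: those theorems run in the opposite direction, determining which surgeries \emph{on a geodesic flow} keep the output skewed $\mathbb{R}$-covered; they say nothing about whether an arbitrary skewed $\mathbb{R}$-covered flow arises from such surgeries. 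That realization claim is essentially as strong as the conjecture itself, so your argument is circular at its anchor point.

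Even granting realization, the contact step fails precisely where the paper flags an open problem. For a closed orbit over a non-simple geodesic the contact window of \fullref{thm:10} is bounded below by a finite $\overline{q}$, whereas the skewed $\mathbb{R}$-covered window in Marty's work is bounded by some $\overline{p}$, and whether $\overline{p}=\overline{q}$ is explicitly posed as an open question in the paper's ``Further directions 1''; a twist demanded by your realization step may therefore fall outside the contact-Anosov range, and nothing in the paper lets you conclude otherwise. Your appeal to \fullref{cor:4} and \fullref{cor:5} to ``widen the admissible $q$-range'' does not work as stated: \fullref{cor:5} requires an embedded quasi-transverse annulus bounded by the Legendrian-transverse knot and a closed orbit, which by \fullref{thm:6} exists exactly when the geodesic is simple --- it cannot be used to enlarge the window in the non-simple case. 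Finally, propagating contactness through \emph{iterated} surgeries is unaddressed: after one surgery the ambient flow is no longer the geodesic flow, and the hypotheses the paper's theorems need (volume preservation, $R_{\alpha_-}\subset\xi_+$, the explicit model of \fullref{Lambda}) must be re-established, which the paper does only in the geodesic-flow setting. Your fallback --- building a $\pi_1$-equivariant contact form from the period symmetry of the orbit space --- is closer in spirit to Marty's actual proof in \cite{Marty2}, but as written it is a single sentence, not an argument.
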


For almost half of a century since the seminal work of Anosov, the only known examples of contact Anosov flows were the geodesic flows of Riemannian or Finsler manifolds. 
Foulon and Hasselblatt  \cite{FoHa1} showed that it is not only possible to construct new examples of contact Anosov flows performing Goodman surgery on a geodesic flow, but also that it could be done producing hyperbolic manifolds.

The construction of Foulon and Hasselblatt uses a special class of arbitrarily thin transverse annuli with positive preferred direction containing a knot $L$ that is Legendrian for the contact structure preserved by the geodesic flow. Such annuli are located far from a closed orbit therefore the set of negative surgery coefficients that yield a contact Anosov flow is not known. 
If the knot $L$ is associated to a simple closed geodesic there is an embedded quasi-transverse annulus containing $L$ and \fullref{cor:5} applies. As a consequence we have the following.

\begin{maintheorem}
\label{thm:6}
Chose a closed orbit $\gamma$ in the geodesic flow on the unit tangent bundle of a hyperbolic surface $S$. If $q<0$ a $(1,q)$-Goodman surgery generates a flow that is orbit equivalent to a contact Anosov \footnote{Our proof of Anosovity does not rely on Barbot's version of the cone field criterion \cite{Bar1} and can be used to give an alternate proof of Theorem 4.3 in \cite{FoHa1}.} flow if and only if the orbit $\gamma$ is a lift of a simple closed geodesic on $S$.
\end{maintheorem}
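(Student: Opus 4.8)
The plan is to prove the equivalence by isolating the negative surgery coefficients, since for $q\geq 0$ the positive preferred direction already makes the $(1,q)$-Goodman surgery Anosov by Goodman's theorem and contact Anosov by Foulon--Hasselblatt \cite{FoHa1}; both implications are then automatic there, and I would reduce to $q<0$.

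For the forward implication I would assume $\gamma$ lifts a simple closed geodesic, so that the Foulon--Hasselblatt knot $L$ bounds an embedded quasi-transverse annulus ending on $\gamma$ and the hypothesis of \fullref{cor:5} is met, producing a nested sequence $\{C_r\}_{r\in\mathbb{Z}^-}$ of transverse annuli along which the $(1,r)$-Goodman surgery is Anosov for every negative $r$. I would then observe that each $C_r$ contains $L$, which is Legendrian for the contact form $\alpha$ preserved by the geodesic flow; the surgery therefore extends $\alpha$ across the gluing region, so the surgered flow is a Reeb flow, and being Anosov it is contact Anosov. Together with the $q\geq 0$ case this would give a contact Anosov flow for every $q\in\mathbb{Z}$.

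For the reverse implication I would argue by contraposition, assuming $\gamma$ lifts a geodesic with a self-intersection. The self-intersection obstructs embeddedness of any quasi-transverse annulus joining $L$ to $\gamma$, so the hypothesis of \fullref{cor:5} fails and the annulus cannot be pushed all the way to $\gamma$. Since the slope $k$ of the characteristic foliation on $\partial N$ controlling the threshold $q>-k$ of \fullref{thm:3} grows as the annulus is extended toward $\gamma$, and the extension must stop short of $\gamma$, the achievable values of $k$ stay bounded above. Below that bound I would invoke the Foulon--Hasselblatt--Vaugon obstruction \cite{FHV2}: for a single sufficiently negative $q$ it would apply to every admissible annulus at once, yielding a flow that fails to be Anosov and hence is not orbit equivalent to any contact Anosov flow, so the conclusion fails for that $q$.

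The hard part will be the reverse implication, specifically translating a self-intersection of the closed geodesic into an embeddedness obstruction for the quasi-transverse annulus and extracting from it a \emph{uniform} upper bound on $k$ over all admissible annuli. That uniformity is exactly what would let the \cite{FHV2} obstruction bite at a single coefficient $q$; without it one could chase a sequence of distinct annuli and evade the obstruction as in the simple case. A secondary difficulty is the upgrade from Anosov to contact Anosov in the forward implication, which would rest on verifying that surgery along the Legendrian knot $L$ preserves $\alpha$ for every $r$.
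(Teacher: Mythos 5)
Your forward implication follows the paper's route: reduce to $q<0$ (the case $q\geq 0$ being covered by Foulon--Hasselblatt), use \fullref{cor:5} to obtain the nested transverse annuli $C_r$, and upgrade from Anosov to contact Anosov by extending the contact form $\beta_+$ preserved by the geodesic flow across the surgery region. But the step you defer as a ``secondary difficulty'' is precisely the new content of the paper's proof, and it does not come for free: the contact form does not extend across the shear automatically --- Foulon and Hasselblatt must add a correction term $dh$, and their contact condition $1-\frac{\partial h}{\partial t}>0$ was previously known only for annuli thin relative to $|q|$, whereas the annuli $C_r$ of \fullref{cor:5} become arbitrarily wide as $r\to -\infty$ (they limit onto the quasi-transverse annulus ending on $\gamma$). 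The paper's \fullref{FHn} closes this by a sign computation: for a negative twist one has $g'(w)\leq 0$, hence $\int_{-\epsilon}^{w}xg'(x)\,dx\geq 0$, and since $\lambda'(t)\leq 0$ the term $\frac{\partial h}{\partial t}$ is nonpositive, so the contact condition holds regardless of the width of the annulus; combined with the $s$-independent normal form of \fullref{Lambda} and the orbit equivalence of \fullref{equivalence}, this is what makes the surgered flow contact Anosov for every $q<0$. So your forward direction is structurally right but omits its one essential computation.

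Your reverse implication, however, is a genuinely wrong approach. You propose to find, for some $q\ll 0$, a surgered flow that ``fails to be Anosov'' by combining a uniform upper bound on the slope $k$ with the Foulon--Hasselblatt--Vaugon obstruction. Three problems. First, \fullref{thm:3} is only a sufficient criterion: bounding $k$ over all admissible annuli shows that the paper's criterion ceases to apply, not that the resulting flow fails to be Anosov or contact. Second, the non-Anosovity statement of \cite{FHV2} is tied to a particular surgery annulus (it requires $-q/\epsilon$ large), so it cannot be invoked ``for every admissible annulus at once''; for a fixed very negative $q$ a wider annulus may still produce an Anosov flow. Third, and decisively, the statement you aim at is false in the relevant regime: by Marty \cite{Ma}, for a non-simple geodesic the $(1,p)$-Goodman surgeries with $p<\overline{p}$ \emph{do} generate Anosov flows --- they merely fail to be positively skewed $\mathbb{R}$-covered. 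The paper's ``only if'' direction is exactly this observation: since, by Barbot, every contact Anosov flow is skewed $\mathbb{R}$-covered, and this property is invariant under orbit equivalence, Marty's surgered flows cannot be orbit equivalent to any contact Anosov flow. The obstruction lives at the level of the skewed $\mathbb{R}$-covered structure, not of Anosovity, and no uniformity in $k$ can substitute for it.
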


\fullref{thm:6} represents the counterpart in the category of contact Anosov flows of results recently proven by  Bonatti-Iakovoglou \cite{BoIo} (Therem 1) and Marty \cite{Ma} (Theorem J) in the category of $\mathbb{R}$-covered Anosov flows. Using \fullref{thm:6} and work of Asaoka--Bonatti--Marty \cite{ABM} we prove a version of Conjecture \fullref{conj1} for surgeries along a single closed orbit of the geodesic flow. Let $\phi^t$ be the geodesic flow with the orientation that makes it a positively skewed $\mathbb{R}$-covered Anosov flow.

\begin{maintheorem}
Any (positive) skewed $\mathbb{R}$-covered Anosov flow obtained by surgery along a closed orbit of a geodesic flow is orbit equivalent to a (positive) contact Anosov flow.
\end{maintheorem}

Note that \fullref{thm:6} has a natural interpretation in the context of contact and symplectic geometry. Foulon and Hasselblatt construction is an example of a {\it contact} surgery in the sense that given a Legendrian knot $L$ in a contact 3-manifold $(M,\eta)$ it produces a new contact 3-manifold performing a Dehn type surgery along $L$. Historically a $(1,-1)$-contact surgery is called {\it Legendrian} surgery. Legendrian surgeries preserve tightness \cite{Wan1} and have a natural interpretation from a symplectic point of view. Foulon and Hasselblatt show \cite{FoHa1} that it is possible to generate an hyperbolic manifold with a (positive) contact Anosov flow performing a single positive contact surgery on a closed {\it filling} geodesic. Since a filling geodesic is non simple, a consequence of \fullref{thm:6} is the impossibility of generating an hyperbolic manifold with a (positive) contact Anosov flow performing a single Legendrian surgery on a geodesic flow. On the contrary it is possible to generate a (positive) contact Anosov flow performing Legendrian surgery along any simple closed geodesic. This operation yields a graph manifold and the associated flows are orbit equivalent to the ones described by Handel and Thurston in \cite{HaTh} for a shear with $a_j<0$ and short enough geodesic $\gamma_j$.

\subsection{Acknowledgment}

The author would like to express his gratitude to Tali Pinsky for introducing him to this subject and for her support throughout the development of this project. He also would like to thank Thomas Barthelmé, Surena Hozoori, Anne Vaugon, Théo Marty and Martin Mion-Mouton for the illuminating conversations and their patience in explaining to the author various subtlety of the subject.

\section{Anosov flows, projectively Anosov flows and bi-contact structures} 
\label{sec:Anosov}

Anosov flows are an important class of dynamical system characterized by structural stability under $C^1$-small perturbations (see \cite{Ano1}, \cite{Ano2} and \cite{Pla}). Beyond their interesting dynamical properties there are evidences of an intricate and beautiful relationship with the topology of the manifold they inhabit (see the survey \cite{Ber} for classic and more recent developments \cite{BaFe1}, \cite{BaFe2}, \cite{BaFe3}, \cite{BaFe4} by Fenley, Barbot and Barthelmé). Geometrically they are distinguished by the contracting and expanding behaviour of two invariant directions 

\begin{definition}
Let $M$ be a closed manifold and $\phi^t:M \rightarrow M$ a $C^1$ flow on $M$. The flow $\phi^t$ is called {\it Anosov} if there is a splitting of the tangent biundle $TM=E^{uu} \oplus E^{ss} \oplus \langle X \rangle$ preserved by $D\phi^t$ and positive constants $A$ and $B$ such that
$$\lVert D \phi^t(v^u) \rVert\geq Ae^{Bt} \lVert  v^u\rVert\;\;\;\;\;\;\text{for any}\;v^u \in E^{uu}$$
$$\lVert D \phi^t(v^s) \rVert\leq Ae^{-Bt} \lVert  v^s\rVert\;\;\;\;\text{for any}\;v^s \in E^{ss}.$$
Here $\lVert \cdot \rVert$ is induce by a Riemmanian metric on $TM$.
We call $E^{uu}$ and $E^{ss}$ respectively the {\it strong unstable bundle} and the {\it strong stable bundle}. 

\end{definition}

Classic examples of Anosov flows are the geodesic flow on the unit tangent bundle of a hyperbolic surface and the suspension flows of hyperbolic linear automorphisms of the torus.

The definition above has remarkable geometric consequences. Anosov showed that the distributions $E^{ss}$ and $E^{uu}$ are uniquely integrable and the associated foliations are denoted by $\mathcal{F}^{ss}$ and $\mathcal{F}^{uu}$. Moreover, the {\it weak stable bundle} $E^s=E^{ss} \oplus \langle X \rangle$ and the {\it weak unstable bundle} $E^u=E^{uu} \oplus \langle X \rangle$ are also uniquely integrable and the codimension one associated foliations are dented with $\mathcal{F}^{s}$ and $\mathcal{F}
^{u}$ (see \fullref{bundles1}).

\begin{figure}
\label{bundles1}
\labellist 
     \begin{footnotesize}
     \pinlabel $\mathcal{F}^{u}$ at 36 188
     \pinlabel $\mathcal{F}^{s}$ at 280 10
     \pinlabel $X$ at 3 85
     \end{footnotesize}
\endlabellist
\includegraphics[width=0.9\textwidth]{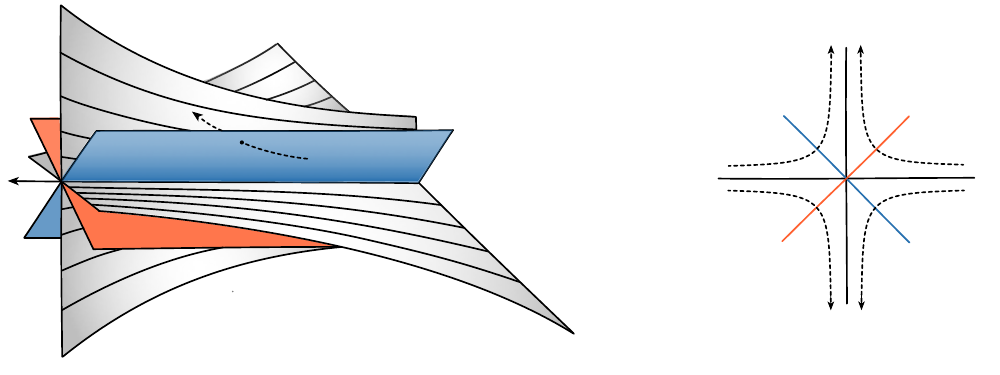}
  \caption{On the left, the tangent bundle in a neighborhood of a flow line of an Anosov flow. In red and blue are depicted the plane fields defining the bi-contact structure. The vertical plane is the the leaf of the unstable weak foliation $\mathcal{F}^{u}$. On the right, the normal bundle $TM/\langle X \rangle$.}
\label{bundles1}
\end{figure}

Mitsumatsu \cite{Mit} first noticed that an Anosov flow with orientable weak invariant foliations is tangent to the intersection of two transverse contact structures (see also Eliashberg-Thurston \cite{ElTh}). We will call such pairs {\it bi-contact structures}. However, the converse statement is not true and there are bi-contact structures that do not define Anosov flows. 
\subsection{Contact structures and Reeb flows.}
A co-oriented contact structure is a plane field distribution that is maximally non-integrable in the sense that it can be described as the kernel of a $C^1$ 1-form satisfying the relation $\alpha\wedge d\alpha\neq0$. By Frobenious theorem contact structures can be thought as polar opposite of foliations: there is not a subsurface $S$ such $TS=\ker \alpha$, even in a neighborhood of a point. Contact structures come in two types, positive and negative. A positive contact structure is a plane field distribution $\xi_+$ described by a $C^1$ 1-form satisfying the relation $\alpha_+\wedge d\alpha_+>0$. A negative contact structure $\xi_-$ is described instead by a $C^1$ 1-form such that $\alpha_-\wedge d\alpha_-<0$. 

An important property of contact structures is that they do not have local invariants. Indeed by Darboux theorem positive (negative) contact structures are all locally contactomorphic to the positive (negative) {\it standard contact structure in} $\mathbb{R} ^3$ described by $\xi_{std}^+=\ker dz-y\:dx$ and $\xi_{std}^-=\ker dz+y\:dx$. Therefore we can locally picture a positive contact (negative) structure as a plane field whose plane rotate counterclockwise (clockwise) along the $x$-axis. Associated to the defining contact forms there is an important class of flows called {\it Reeb flows}. Given a contact form $\alpha$ we define the Reeb vector field $R_{\alpha}$ as the unique vector field satisfying the equations

$$\alpha(R_{\alpha})=1,\;\;\;\;d\alpha(R_{\alpha},\cdot)=0.$$
These relations imply that $R_{\alpha}$ is transverse to $\ker \alpha$ and $\mathcal{L}_{R_{\alpha}}\alpha=0$ ($R_{\alpha}$ preserves $\alpha$). 

We conclude this section introducing a very important class of vector fields that is well studied and has remarkable geometric properties (\cite{Liv}, \cite{FoHa1}).

\begin{definition}
A Reeb flow that is also an Anosov flow is called a {\it contact Anosov} flow. 
\end{definition}

In other words a contact Anosov flow is an Anosov flow preserving a contact form.

\subsection{Bi-contact structures and projectively Anosov flows.}
We now give an example of a pair of opposite and transverse contact structures that does not define an Anosov flow. 
\begin{example}
\label{ex:T3}
We construct a bi-contact structure on $T^3$ using a recipe introduced by Mitsumatsu in \cite{Mit} and \cite{Mit2}.
Consider the contact forms defined on $T^2\times I$
$$\alpha_{n}=cos(2n\pi z)dx-sin(2n\pi z)dy,$$ $$\alpha_{-m}=cos(2m\pi z)dx+sin(2m\pi z)dy.$$
They are not transverse to each other on the tori defined by  $\{z=0\}$, $\{z=\frac{1}{4}\}$, $\{z=\frac{1}{2}\}$, $\{z=\frac{3}{4}\}$. If we introduce a perturbation $\epsilon(z)\: dz$ such that $\epsilon(z)$ is a function that doesn't vanish on the tori, the contact forms $\alpha_+=\alpha_{n}+\epsilon(z)\: dz$ and $\alpha_-=\alpha_{-m}$ define transverse contact structures of opposite orientations.

As Plante and Thurston showed in \cite{PlTh}, the fundamental group of a manifold that admits an Anosov flow grows exponentially. Since the ambient manifold is $T^3$, any flow defined by the pair of contact structures $(\ker \alpha_-,\ker \alpha_+)$ does not define an Anosov flow.

\end{example}

\begin{definition}[Mitsumatsu \cite{Mit}]
Let $M$ be a closed manifold and $\phi^t:M \rightarrow M$ a $C^1$ flow on $M$. The flow $\phi^t$ is called {\it projectively Anosov} if there is a splitting of the projectified tangent bundle $TM/ \langle X \rangle=\mathcal{E}^u \oplus \mathcal{E}^s$ preserved by $D\phi^t$ and positive constants $A$ and $B$ such that
$$\frac{\lVert D \phi^t(v^u) \rVert}{ \lVert D \phi^t(v^s) \rVert}\geq Ae^{Bt} \frac{\lVert  v^u\rVert}{\lVert  v^s\rVert}\;\;\;\;\;\;\;\text{for any}\;v^u \in \mathcal{E}^u \; \text{and}\;v^s \in \mathcal{E}^s $$

Here $\lVert \cdot \rVert$ is induce by a Riemmanian metric on $TM$.
We call $\mathcal{E}^u$ and $\mathcal{E}^s$ respectively the {\it unstable bundle} and the {\it stable bundle}. 

\end{definition}

The invariant bundles $\mathcal{E}^u$ and $\mathcal{E}^s$ induce invariant plane fields $E^u$ and $E^s$ on $M$. These plane fields are continuous and integrable, but unlike the Anosov case, the integral manifolds may not be unique (see \cite{ElTh}).
However, when they are smooth they also are uniquely integrable. We call these flows {\it regular projectively Anosov} (see \cite{Nod}, \cite{NoTs} and \cite{Asa} for a complete classification).
The following result gives a geometric characterization of the family of projectively Anosov flows 
\begin{proposition}[Mitsumatsu \cite{Mit}]
Let $X$ be a $C^1$ vector field on $M$. Then $X$ is projectively Anosov if and only if it is defined by a bi-contact structure.
\end{proposition}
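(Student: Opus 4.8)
The plan is to prove both implications through a single bridge: translating the contact condition on a plane field containing $X$ into a statement about how the flow rotates that plane inside the $2$-plane bundle $TM/\langle X\rangle$. Fix a $1$-form $\alpha$ with $\ker\alpha=\xi$, where $X\subset\xi$, so that $\alpha(X)=0$. For $v\in\xi$ and any $w\notin\xi$ one computes
\[
\alpha\wedge d\alpha(X,v,w)=\alpha(w)\,d\alpha(X,v)=-\alpha(w)\,\alpha([X,v]),
\]
so $\xi$ is a contact structure precisely when $\alpha([X,v])\neq 0$ for $v$ spanning $\xi/\langle X\rangle$. Geometrically $\alpha([X,v])=\alpha(\mathcal{L}_X v)$ measures the rate at which the flow of $X$ pushes the line $\xi/\langle X\rangle$ out of itself, i.e.\ the angular speed of $D\phi^t(\xi)$ relative to $\xi$ in $TM/\langle X\rangle$. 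Thus $\xi$ is contact iff the flow rotates the corresponding line field monotonically, and the sign of $\alpha([X,v])$ records whether $\xi$ is positive or negative.

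With this in hand I would work on the projectivized bundle $\mathbb{P}(TM/\langle X\rangle)$, whose fibers are circles and on which $D\phi^t$ induces a flow $P\phi^t$. Trivializing the quotient along an orbit gives a nonautonomous ODE $\dot\theta=f(\theta,t)$ on the fiber circle governing the evolution of directions; an invariant line field is a solution, whereas a plane field $\xi$ containing $X$ determines a section $\theta_\xi(t)$, and the bridge lemma says $\xi$ is contact iff solutions cross $\theta_\xi$ transversally, with crossing sign equal to the contact sign. For the direction bi-contact $\Rightarrow$ projectively Anosov, the transverse planes $\xi_+,\xi_-$ give two sections crossed in opposite senses; hence of the two arcs into which $\xi_+/\langle X\rangle$ and $\xi_-/\langle X\rangle$ divide each fiber, one is forward-trapped and the other backward-trapped by $P\phi^t$. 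A standard cone-field argument then produces invariant line fields $\mathcal{E}^u=\bigcap_{t\geq 0}P\phi^t(\text{forward cone})$ and its backward analogue $\mathcal{E}^s$, giving the splitting $TM/\langle X\rangle=\mathcal{E}^u\oplus\mathcal{E}^s$.

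For the converse I would start from the invariant splitting: now $\mathcal{E}^u,\mathcal{E}^s$ are $D\phi^t$-invariant, so they are solution curves of the fiberwise ODE, attracting resp.\ repelling because of the domination inequality. In each of the two open arcs between them the projective flow has a definite sign, so choosing a continuous plane field $\xi_+$ whose line lies strictly interior to one arc and $\xi_-$ strictly interior to the other yields, via the bridge lemma, a positive and a negative contact structure; they are automatically transverse with $\xi_+\cap\xi_-=\langle X\rangle$ since their lines lie in different arcs. Care is needed to ensure the chosen sections are genuinely crossed transversally rather than being accidentally invariant, which I would guarantee by keeping them uniformly inside the arcs, where domination bounds the projective speed away from zero.

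I expect the main obstacle to be the quantitative step in the cone-field argument: upgrading the qualitative monotone crossing supplied by the contact condition to the uniform exponential estimate $\|D\phi^t v^u\|/\|D\phi^t v^s\|\geq Ae^{Bt}\,\|v^u\|/\|v^s\|$ defining projective Anosovity. This requires using compactness of $M$ to bound the crossing rate $|\alpha_\pm\wedge d\alpha_\pm|$ (suitably normalized) away from zero, and then relating the resulting exponential contraction of fiber arcs to the ratio of norms in $TM/\langle X\rangle$, a computation comparing projective contraction with the eigenvalue ratio of the linearized cocycle. A secondary technical point is co-orientability: to have global defining forms $\alpha_\pm$ and an honest direct-sum splitting into two line bundles one works with orientable weak foliations, or passes to a double cover, which I would state explicitly at the outset.
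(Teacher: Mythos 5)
The paper itself gives no proof of this proposition: it is quoted from Mitsumatsu \cite{Mit} (see also Eliashberg--Thurston \cite{ElTh}), so your sketch has to be measured against the original argument, and in outline it reconstructs that argument faithfully. Your bridge computation $\alpha\wedge d\alpha(X,v,w)=-\alpha(w)\,\alpha([X,v])$ is correct and is exactly the mechanism used in the cited sources to identify the contact condition on a plane field containing $X$ with monotone rotation of the line $\xi/\langle X\rangle$ inside $TM/\langle X\rangle$. The trapped-arc picture on the projectivized bundle and the cone-field construction of $\mathcal{E}^u$ and $\mathcal{E}^s$ for the direction bi-contact $\Rightarrow$ projectively Anosov is the standard route, and the quantitative step you flag as the main obstacle is genuinely there but is supplied in the standard way: compactness gives a uniform lower bound on the crossing rate, hence strict contraction of the fiber arcs over a definite time $t_0$, and strictly invariant cone fields over a compact base yield the exponential domination estimate.

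The one step where your plan as written would fail is the converse. The invariant bundles $\mathcal{E}^u,\mathcal{E}^s$ of a projectively Anosov flow are in general only continuous, and domination controls the rotation of directions only over a definite time (equivalently, instantaneously only in fiber coordinates adapted to this merely continuous splitting). The contact condition, by your own bridge lemma, is an \emph{instantaneous} pointwise condition on a plane field that must be at least $C^1$ for $d\alpha_\pm$ to make sense. So ``keeping the sections uniformly inside the arcs'' is not enough: once you smooth your choice of $\xi_\pm$, the section has its own drift $\dot\theta_\xi$ along the flow, which you do not control, and nothing prevents $\dot\theta_{\mathrm{flow}}-\dot\theta_\xi$ from vanishing at some points even though the section stays in a compact part of the interior of the arcs --- the instantaneous projective speed of the flow in a fixed smooth trivialization is \emph{not} bounded away from zero there; only the finite-time displacement is. The standard repair, which you should insert, is a time-averaging (or adapted Lyapunov norm) argument: replace the chosen section by the average over $s\in[0,t_0]$ of its images under the projectivized flow and then mollify, so that its derivative along the flow becomes a difference quotient bounded below by the uniform finite-time rotation. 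With that insertion, together with your closing remark on co-orientability and double covers, the sketch is a correct reconstruction of the cited proof.
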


\begin{remark}
 We refer to \cite{Hoz} and \cite{Hoz2} for a more complete overview of the connection between bi-contact geometry, symplectic geometry and projectively Anosov flows and for a precise discussion on the regularity of the plane fields, bundles and foliations involved. 
\end{remark}
\subsection{Reeb dynamics of a bi-contact structure defining an Anosov flows}

We now present a characterization of Anosov flows due to Hoozori \cite{Hoz} that uses the Reeb dynamics of the underling bi-contact structure.

\begin{definition}[Hozoori \cite{Hoz}]
Consider a bi-contact structure $(\xi_-,\xi_+)$. A vector field is {\it dynamically positive (negative)} if at every point $p\in M$ it lies in the interior of the first or third (second or forth) region in \fullref{bundles1}.
\end{definition}

\begin{theorem}[Hozoori \cite{Hoz}]
\label{prop:Hoz}
Let $\phi^t$ be a projectively Anosov flow on $M$. The following are equivalent.
\end{theorem}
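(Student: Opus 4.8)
The plan is to prove the equivalence by translating each condition into a statement about the sign of the transverse expansion rates of the flow, and then showing that these signs are exactly what the position of the Reeb vector fields encodes. The starting point is that $X$ spans the line $\xi_-\cap\xi_+$, so $\alpha_\pm(X)=0$ and hence $\mathcal{L}_X\alpha_\pm=\iota_X d\alpha_\pm$ vanishes on $X$; writing this in the coframe $\{\alpha_+,\alpha_-,\beta\}$ with $\beta(X)=1$ gives relations of the form $\mathcal{L}_X\alpha_\pm=\lambda_\pm\,\alpha_\pm+\mu_\pm\,\alpha_\mp$ for continuous functions $\lambda_\pm,\mu_\pm$. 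The two contact conditions $\alpha_+\wedge d\alpha_+>0>\alpha_-\wedge d\alpha_-$ force $\mu_+$ and $\mu_-$ to have definite, opposite signs, while the $\lambda_\pm$ are precisely the infinitesimal expansion/contraction rates of the strong bundles. The crucial identity is obtained from $\iota_{R_{\alpha_\pm}}d\alpha_\pm=0$, which gives $(\mathcal{L}_X\alpha_\pm)(R_{\alpha_\pm})=d\alpha_\pm(X,R_{\alpha_\pm})=0$ and therefore $\lambda_\pm=-\mu_\pm\,\alpha_\mp(R_{\alpha_\pm})$. Since $\mu_\pm$ has fixed sign, the sign of $\lambda_\pm$ is the sign of $\alpha_\mp(R_{\alpha_\pm})$, i.e.\ the side of $\ker\alpha_\mp$ on which $R_{\alpha_\pm}$ sits --- exactly its being dynamically positive or negative in the sense of the regions of \fullref{bundles}.

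For the implication that the Reeb sign condition implies Anosovity, I would read off from dynamical positivity of $R_{\alpha_+}$ and negativity of $R_{\alpha_-}$ (or the reverse, per the orientation convention) that $\lambda_+>0$ and $\lambda_-<0$ pointwise, and by compactness of $M$ and continuity that these rates are uniformly bounded away from $0$. Integrating the resulting linear differential inequalities for $\lVert D\phi^t v^u\rVert$ and $\lVert D\phi^t v^s\rVert$ along the flow by a Gronwall estimate then yields the uniform exponential expansion and contraction of the candidate strong bundles, which are the lines adapted to $\xi_\pm$ and the weak foliations $\mathcal{F}^u,\mathcal{F}^s$. This upgrades the purely projective (relative) estimate guaranteed by projective Anosovity to the absolute estimates in the definition of an Anosov flow.

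For the converse implication, I would start from an Anosov flow with its splitting $E^{uu}\oplus\langle X\rangle\oplus E^{ss}$ and an adapted Riemannian metric, and construct defining $1$-forms $\alpha_\pm$ for a supporting bi-contact structure following the Mitsumatsu--Hozoori recipe, chosen so that $\ker\alpha_\pm$ is asymptotic to the weak foliations, transverse, and contact. The rates $\lambda_\pm$ induced by these forms are then the genuine Anosov expansion and contraction exponents, which have definite sign; rescaling $\alpha_\pm$ by positive functions changes $\lambda_\pm$ by the directional derivative of the scaling along $X$ without altering the plane fields, and this freedom is used to normalize the forms so that, via the identity $\lambda_\pm=-\mu_\pm\,\alpha_\mp(R_{\alpha_\pm})$, the Reeb fields $R_{\alpha_\pm}$ land in the prescribed cones, giving the dynamical sign condition. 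Any intermediate metric or transverse-area condition in the list is handled by the same computation, since the area form $\alpha_+\wedge\alpha_-$ on the transverse plane evolves by $\mathcal{L}_X(\alpha_+\wedge\alpha_-)=(\lambda_++\lambda_-)\,\alpha_+\wedge\alpha_-$, so its contraction/expansion rate is literally $\lambda_++\lambda_-$.

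The main obstacle I anticipate is the passage from pointwise sign information to uniform hyperbolicity with constants independent of the point, while controlling the cross terms $\mu_\pm\alpha_\mp$ that mix the two line fields: one must show the splitting produced is genuinely invariant rather than merely dominated, which is where the Gronwall argument and compactness must be combined carefully. A secondary technical point, flagged in the Remark, is that the Reeb fields are a priori only $C^0$, so the entire argument should be run at the level of the forms $\alpha_\pm$ and their Lie derivatives rather than along integral curves of $R_{\alpha_\pm}$; the unique integrability of the resulting strong bundles is then recovered at the end by invoking Anosov's theorem once uniform hyperbolicity is established.
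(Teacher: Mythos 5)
The paper does not actually prove this statement: it is imported verbatim from Hozoori \cite{Hoz}, so your attempt can only be measured against Hozoori's original argument. Much of your toolkit is the right one and does appear there: the decomposition $\mathcal{L}_X\alpha_\pm=\lambda_\pm\,\alpha_\pm+\mu_\pm\,\alpha_\mp$ (valid because $(\mathcal{L}_X\alpha_\pm)(X)=0$, so both sides annihilate $X$), the observation that the two contact conditions pin down the signs of $\mu_\pm$, the identity $\lambda_\pm=-\mu_\pm\,\alpha_\mp(R_{\alpha_\pm})$ coming from $\iota_{R_{\alpha_\pm}}d\alpha_\pm=0$, and compactness of $M$ to upgrade pointwise sign conditions on continuous rates to uniform exponential estimates. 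These are all genuine ingredients of the proof in \cite{Hoz}.

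The gap is in the central translation: you identify ``dynamically positive/negative'' with the side of $\ker\alpha_\mp$ on which $R_{\alpha_\pm}$ sits, i.e.\ with the sign of $\alpha_\mp(R_{\alpha_\pm})$. That is not the definition. The four regions are cut out by the invariant bundles $E^u$ and $E^s$ of the projectively Anosov splitting, not by $\xi_\pm$, and the projection of $\xi_+$ to $TM/\langle X\rangle$ lies strictly \emph{inside} one of those regions; hence a dynamically positive $R_{\alpha_-}$ can have $\alpha_+(R_{\alpha_-})$ positive, negative, or zero. The zero case is not exotic --- it is the case this paper lives on: by \fullref{VPflows} a volume preserving Anosov flow admits supporting forms with $R_{\alpha_-}\in\ker\alpha_+$ (for the geodesic flow, $R_{\alpha_-}=V$ and $\alpha_+(V)=0$), and these Reeb fields are dynamically positive while your criterion reads $\lambda_-\equiv 0$, so your Gronwall step integrates to no contraction at all; the implication ``Reeb condition $\Rightarrow$ Anosov'' collapses exactly on these examples. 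Relatedly, $\lambda_\pm$ are not ``the expansion rates of the strong bundles'': $\xi_\pm$ are not invariant, and the rates of the invariant bundles carry the $\mu$-cross terms. Measuring a section $v(t)$ of $\bar{E}^{s}$ or $\bar{E}^{u}$ in the $|\alpha_+|$-norm gives the rate $r_{s,u}=\lambda_++\mu_+\sigma_{s,u}=\mu_+(\sigma_{s,u}-\alpha_-(R_{\alpha_+}))$, where $\sigma_{s,u}=\alpha_-(v)/\alpha_+(v)$ are the slopes of the invariant bundles in the coframe; Anosovity ($r_s<0<r_u$) requires $\alpha_-(R_{\alpha_+})$ to \emph{separate} $\sigma_s$ from $\sigma_u$, which is precisely the statement that $R_{\alpha_+}$ lies in the interior of the correct quadrants of $E^u\oplus E^s$ --- Hozoori's actual definition, and the reason a single Reeb condition yields both contraction and expansion simultaneously. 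So the cross terms you flag at the end are not a technicality to be absorbed by a Gronwall estimate; controlling them is the content of the theorem, and the same conflation also undermines your converse direction, where $\lambda_\pm=-\mu_\pm\,\alpha_\mp(R_{\alpha_\pm})$ fixes only one coordinate of the Reeb field rather than its position relative to the invariant cones.
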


\begin{enumerate}
\item {The flow \it $\phi^t$ is Anosov},

\label{cond:h1}

\item {\it There is a pair} $(\alpha_-,\alpha_+)$ {\it of positive and negative contact forms defining} $\phi^t$ {\it such that the Reeb vector field of} $\alpha_+$ {\it is dynamically negative},

\label{cond:h2}

\item {\it There is a pair} $(\alpha_-,\alpha_+)$ {\it of positive and negative contact forms defining} $\phi^t$ {\it such that the Reeb vector field of} $\alpha_-$ {\it is dynamically positive}.

\label{cond:h3}
\end{enumerate}

A {\it volume preserving} Anosov flow is an Anosov flow preserving a continuous volume form. It is known that if the flow is $C^k$ such a volume form is automatically $C^k$ (see \cite{Hoz2} for more references).

 Hozoori shows \cite{Hoz2} the following characterization of bi-contact structures defining volume preserving Anosov flows.

\begin{theorem}
[Hozoori \cite{Hoz2}]
\label{VPflows}
Let $\phi^t$ be a projectively Anosov flow on $M$. $\phi^t$ is a volume preserving Anosov flow if and only if there are pairs of positive and negative contact forms defining $\phi^t$ such that $R_{\alpha_-}\in \ker \alpha_+$ and $R_{\alpha_+}\in \ker \alpha_-$.

\end{theorem}

\begin{figure}

\labellist 
     \begin{footnotesize}
     \pinlabel $L$ at 260 152
     \pinlabel UT$\gamma$ at 448 155
     \pinlabel $\gamma^-$ at 268 38
    \pinlabel $\gamma^+$ at 268 275
    \pinlabel $X$ at 185 174
     \end{footnotesize}
\endlabellist

\includegraphics[width=1\textwidth]{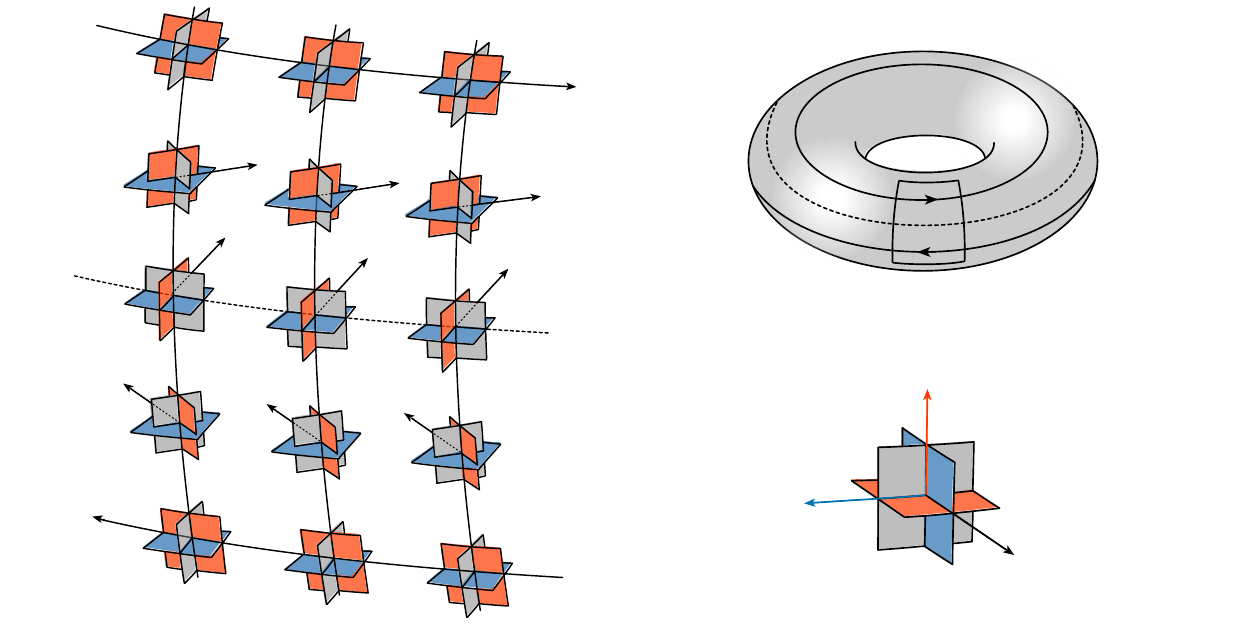}
 \caption{On the top right, a Birkhoff torus associated to simple closed geodesic $\gamma$. On the right side, the contact structures described in \ref{Bir}. In grey, the contact structure $\eta_+=\ker\beta_+$ preserved by the geodesic flow. The closed orbits $\gamma^+$ and $\mathfrak{\gamma^-}$ are Legendrian knots for the bi-contact structure $(\xi_-,\xi_+)$ that generates the geodesic flow. The contact structure $\xi_+=\ker\alpha_+$ is represented in red while the $\xi_-=\ker\alpha_-$, is represented in blue. The special knot $L$ is represented by the dashed line.}
\label{fig:Geodesic flow}
\end{figure}

\section{The geodesic flow on the unit tangent bundle of a hyperbolic surface} 
\label{sec:Geodesic}

In this section we introduce the prototype of an Anosov flow: the geodesic flow on the unit tangent bundle of a hyperbolic surface (see \cite{Ano1}). This object is particularly important in our context since it is the ambient flow where several surgery constructions (i.e. the ones introduced by Handel-Thurston and Foulon-Hasselblatt) are defined. Moreover for a long time it was the only known example of a contact Anosov flow.

\begin{subsection}{Geometric structures on UTS}
\label{Additional}

The geodesic flow on the unit tangent bundle of a hyperbolic surface $S$ carries some remarkable geometric structure that can be interpreted in the context of bi-contact geometry. We follow the discussion of \cite{FHV2}. Using the identification of UT$\mathbb{H}^2$ with $\PSL(2,\mathbb{R})$ it is possible to show that there is a canonical framing  consisting of the vector field $X$ that generates the flow, the periodic vector field $V$ pointing in the fiber direction, and the vector field defined by $H:=[V,X]$. This frame satisfies the following relations 
\begin{equation}
\label{str eq}
[V,X]=H,\;\;\;\;[H,X]=V,\;\;\;\;[H,V]=X.
\end{equation}
A consequence of the structure equations is that the strong stable and unstable bundles $E^{\pm}$ are spanned by the vectors $e^{\pm}=V\pm H$.
It is not difficult to show that (\ref{str eq}) implies the existence of three 1-forms $\alpha_-,\alpha_+$ and $\beta_+$ defining mutually transverse contact structures (see \fullref{fig:Geodesic flow} and \cite{FHV2} for more details). They are defined by the following

$$\beta_+(V)=0=\beta_+(H),\;\;\;\;\alpha_+(X)=0=\alpha_+(V),\;\;\;\;\alpha_-(X)=0=\alpha_-(H),$$
 $$\beta_+(X)=1,\;\;\;\;\;\;\;\;\;\;\;\;\;\;\;\;\;\;\;\alpha_+(H)=1,\;\;\;\;\;\;\;\;\;\;\;\;\;\;\;\;\;\;\;\alpha_-(V)=1,$$
 $$d\beta_+(X,\cdot)=0,\;\;\;\;\;\;\;\;\;\;\;\;\;\;\;d\alpha_+(H,\cdot)=0,\;\;\;\;\;\;\;\;\;\;\;\;\;\;\;d\alpha_-(V,\cdot)=0.$$

The relations above show that the vector fields $X$, $H$, and $V$ are respectively the Reeb vector fields of the contact forms $\beta_+$, $\alpha_+$ and $\alpha_-$. In particular the geodesic flow is the Reeb flow of $\beta_+$. The pair of contact forms $(\alpha_-,\alpha_+)$ define a bi-contact structure that supports $X$.

\end{subsection}

\begin{figure}

\labellist 
     \begin{footnotesize}
     \pinlabel $L$ at 208 135
      \pinlabel $\gamma^+$ at 210 12
\pinlabel $\gamma^+$ at 210 175      
       \pinlabel $\gamma^-$ at 210 95

     \pinlabel $\gamma^+$ at 465 12
     \pinlabel $\gamma^+$ at 465 175
    \pinlabel $\gamma^+$ at 465 95
    \pinlabel $L$ at 463 133
     \end{footnotesize}
\endlabellist

\includegraphics[width=0.9\textwidth]{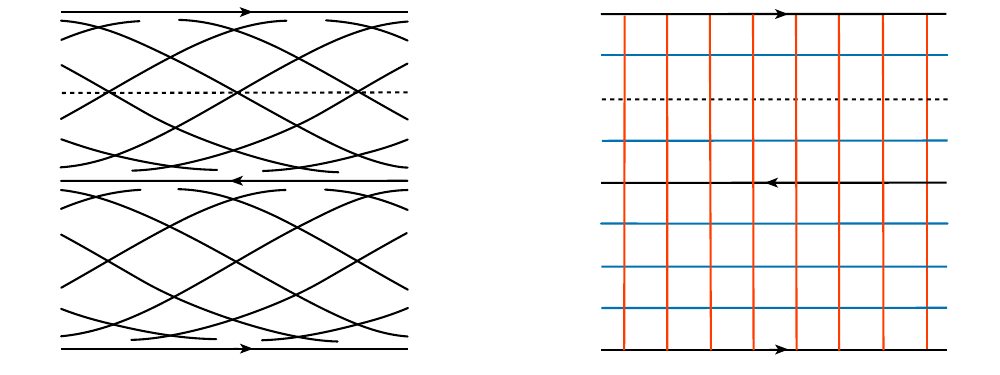}
  \caption{On the left, the foliation on an embedded Birkhoff torus induced by the stable and unstable weak foliations. On the right, the characteristic foliations induced by the bi-contact structure $(\ker\alpha_+,\ker\alpha_-)$. }
\label{fig:birkhoff foliations}
\end{figure}

\begin{subsection}{Structures on a Birkhoff torus and Legendrian knots}
\label{Bir}
Let S$\;=\mathbb{H}^2 /\Gamma$ be a hyperbolic surface and let $\gamma$ be a closed geodesic. The lift of $\gamma$ to the unit tangent bundle of $\gamma$ is an immersed {\it Birkhoff} torus UT$\gamma$ with two closed orbits $\{\gamma_1,\gamma_2\}$, in the sense that UT$ \gamma$ is an immersed torus transverse to the geodesic flow in UT$\gamma\setminus \{\gamma_1,\gamma_2\}$ and tangent to the geodesic flow along $\{\gamma_1,\gamma_2\}$. If the geodesic $\gamma$ is also simple UT$ \gamma$ is an embedded torus.  

We now describe a special knot $L$ on the Birkhoff torus that plays a very important role in numerous applications. This knot is defined by the angle $\theta=\frac{\pi}{2}$ on each fiber along $\gamma$ and has the following remarkable properties (see \fullref{bundles1} and \fullref{fig:birkhoff foliations}).

\begin{enumerate}
\item $L$ is a Legendrian knot for the contact structure $\eta_+$ preserved by the flow,
\item $L$ is transverse to the weak stable and unstable foliations,
\item $L$ is a Legendrian-transverse knot with respect to the bi-contact structure $(\ker\alpha_-,\ker\alpha_+)$.

\end{enumerate}

\end{subsection}

\section{Foulon--Hasselblatt contact surgery} % (fold)
\label{sec:Foulon}

Let $\alpha$ be a $C^k$ contact form with Reeb vector field $R_{\alpha}$,  and consider a Legendrian knot $L$ for $\xi=\ker \alpha$. Foulon and Hasselblatt describe in \cite{FoHa1} a family of contact surgeries of the \textit{Dehn} type that generates a new $C^k$ contact form $\tilde{\alpha}$ in the new manifold $\tilde{M}$ and a $C^{k-1}$ vector field $R_{\tilde{\alpha}}$ such that

\begin{enumerate}

\item $\tilde{\xi}=\ker \tilde{\alpha}$ is isotopic to  a contact structure obtained from $\xi=\ker \alpha$ by classic contact surgery and such that $\tilde{\alpha}=\alpha$ outside a neighborhood of an annulus $C$ transverse to the flow of $R_{\alpha}$.

\item $R_{\tilde{\alpha}}$ is the Reeb vector field of $\tilde{\alpha}$. Moreover $R_{\tilde{\alpha}}$ is collinear to $R_{\alpha}$ in $M\setminus C$ and is a reparametrization of $R_{\alpha}$ in a neighborhood of $C$.  

\end{enumerate}

In other words Foulon and Hasselblatt construction is a contact surgery that allows us to have some control on the direction of the resulting Reeb vector field.

\subsection{Definition of the contact surgery}
\label{sub:contact surgery}

In the following we recall the main features of Foulon-Hasselblatt construction (see \cite{FoHa1} and \cite{FHV2} for more details). Given a 3-manifold with a contact structure $\xi=\ker\alpha$ and a Legendrian knot $L$ there is a coordinate system $$(t,s,w)\in N= (-\delta,\delta)\times S^1 \times (-\epsilon,\epsilon),$$
where the parameters $(s,w)$ are defined on the \textit{surgery annulus} $C=\{0\}\times S^1 \times (-\epsilon,\epsilon)$ (see \fullref{fig:Anosov flow}). More precisely, $s \in S^1$ is the parameter of $L$ and $w$ belongs to some interval $(-\epsilon,\epsilon)$. The transverse parameter $t$ is such that the Reeb vector field of $\gamma$ satisfies $R_\alpha=\frac{\partial}{\partial t}$, therefore $N$ is a flow-box chart for $R_\alpha$. In this coordinate system a contact form defining a contact structure takes the particularly simple expression $\gamma=dt+w\: ds$.

\begin{figure}

\labellist 
     \begin{footnotesize}
    
    \pinlabel $L$ at 475 83
     \end{footnotesize}
\endlabellist

\includegraphics[width=1\textwidth]{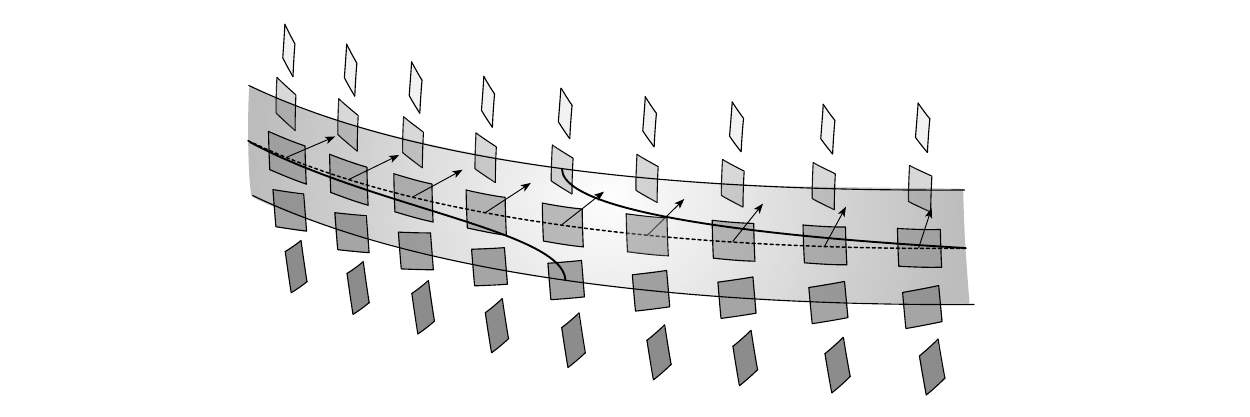}
  \caption{The surgery annulus in Foulon-Hasselblatt construction.}
\label{fig:Anosov flow}
\end{figure}

The surgery can be thought as first cutting the manifold $M$ along the annulus $C$ and then gluing back the two sides of the cut in a different way. In particular, we glue the point that on one side of the cut is described by coordinates $(s,w)$ to the one described by coordinates $(s+g(w),w)$ on the other side. Here $g:[-\epsilon,\epsilon]\rightarrow S^1$ is a non decreasing function such that $g(-\epsilon)=0$ and $g(\epsilon)=2\pi q$ and satisfying a number of additional requirements. The transition map 
$G:C\rightarrow  C,\; (s,w)\rightarrow (s+g(w),w)$
is often called \textit{shear}. Since 
$$G_*\frac{\partial}{\partial t}=\frac{\partial}{\partial t}$$
the shear define a smooth vector field from $R_\alpha$. Note that if we restrict our attention to the vector field $R_{\alpha}$, Foulon-Hasselblatt construction can be interpreted as a Goodman surgery.

Let us denote with $(\alpha)^-$ the contact form defined on one side of the flow-box chart and with $(\alpha)^+$ the contact form on the other side, we easily see that 

$$G^*(\alpha)^+=(\alpha)^-+wg'(w)\:dw
$$

therefore the shear defines a smooth vector field from $R_\alpha$ but not a smooth contact structure on the new manifold. This issue is addressed by Foulon and Hasselblatt introducing a deformation that yields to a 1-form $(\tilde{\alpha})^{+}=(\alpha)^+-dh$ of class $C^1$ and $dh$ is the differential of the following function

$$h(t,w)=\lambda(t) \int_{-\epsilon}^{w}xg'(x)\;dx.$$

Here $\lambda:\mathbb{R}\rightarrow[0,1]$ is $C^1$ bump function with support in some interval $(0,\delta)$ with $\delta>0$ that takes value $1$ in a neighborhood of $0^+$ and takes value $0$ in a neighborhood of $\delta^-$. 
With this choices we have $G^*(\alpha-dh)^+=(\alpha)^-$. Therefore $\tilde{\alpha}$ is a well defined 1-form  of class $C^1$ in $\tilde{M}$.
Note that the deformation  depends on the shear and it is not immediately clear if the plane field distribution $\ker\tilde{\alpha}$ still defines a contact structure. The authors show that if we choose $0<\epsilon< \frac{\delta}{2\pi q}$ this is in fact the case. Moreover the Reeb vector field of $\tilde{\alpha}$ takes the form $$R_{\tilde{\alpha}}=\frac{R_\alpha}{1- dh(R_\alpha)}.$$

\subsection{Application to contact Anosov flows}
Foulon--Hasselblatt construction is purely contact geometric, it does not require the Reeb vector field of $\alpha$ to be Anosov and it can be performed in a neighborhood of any Legendrian knot.

If applied to a geodesic flow on the unit tangent bundle of an hyperbolic surface we have the following.

\begin{theorem}
[Foulon--Hasselblatt--Vaugon \cite{FHV2}]
\label{prop:FH}
Select a closed geodesic $\gamma$ on a hyperbolic surface $S$ and consider the geodesic flow on $UTS$. Consider the knot $L$ defined by the angle $\theta=\frac{\pi}{2}$ on each fiber along $\gamma$. Let $2\epsilon$ be the width of the surgery annulus.

\begin{enumerate}

\item The $(1,q)$-Dehn surgery along $L$ defined in \fullref{sub:contact surgery} does produce an Anosov flow if $q>0$ for every $\epsilon>0$.

\item It does not produce an Anosov if $-q/\epsilon$ is large enough, i.e., if either $q<0$ is fixed and $\epsilon$ is small enough or if $\epsilon>0$ is fixed and $q<0$ with $\left|q\right|$ big enough.

\end{enumerate}
\end{theorem}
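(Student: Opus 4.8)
The plan is to reduce both statements to the behaviour of the linearized return map of the surgered flow on the transverse annulus $C$, and to track how the shear $G$ interacts with the hyperbolic splitting of the ambient geodesic flow. First I would record the transverse dynamics of the unperturbed flow in the invariant frame: from the structure equations \eqref{str eq} one computes $\mathcal L_X(V+H)=-(V+H)$ and $\mathcal L_X(V-H)=V-H$, so the two lines $\langle V+H\rangle$ and $\langle V-H\rangle$ are exactly the strong bundles $E^{ss},E^{uu}$, one uniformly contracted and the other uniformly expanded along $X$. In the coordinates $(s,t,w)$ of \fullref{sub:contact surgery}, where $\partial_s$ is tangent to $L$ and $R_\alpha=\partial_t$, the traces on $C$ of the two weak foliations are line fields $\ell^s,\ell^u$; since $L$ is transverse to both weak foliations, each of $\ell^s,\ell^u$ is transverse to $\partial_s$, and the symmetry of $\langle V\pm H\rangle$ about the fiber direction places them on opposite sides of $\partial_s$. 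The surgery then replaces the geodesic return map $P$ by $P\circ DG_w$, where $DG_w=\bigl(\begin{smallmatrix}1 & g'(w)\\ 0 & 1\end{smallmatrix}\bigr)$ is a shear along $\partial_s$ of rate $g'(w)$, with $g'\ge 0$ exactly when $q>0$ and $|g'|\asymp |q|/\epsilon$.

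For part (1) I would run a cone-field argument. Fix an unstable cone $\mathcal C^u$ about $E^{uu}$ and a stable cone $\mathcal C^s$ about $E^{ss}$; the geodesic return map $P$ strictly preserves $\mathcal C^u$ with expansion and $P^{-1}$ preserves $\mathcal C^s$ with expansion. Because $\ell^u$ lies on the expanding side of $\partial_s$, a shear of the preferred sign $g'\ge0$ carries $\mathcal C^u$ into itself and strictly increases the dominant norm, and dually $DG_w^{-1}$ preserves $\mathcal C^s$; composing with $P$ yields invariant expanding and contracting cone fields, hence uniform hyperbolicity of $P\circ DG_w$. This argument uses only the sign of $g'$ and never its size, so it holds for every $\epsilon>0$ and every $q>0$; this is Goodman's preferred direction. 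Since the surgered flow is the Reeb flow of the contact form $\tilde\alpha$, Anosovity upgrades automatically to contact Anosovity.

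For part (2) the shear rate is negative with $|g'(w)|\asymp|q|/\epsilon$, so it opposes the preferred direction. The essential point is that $\partial_s$ is transverse to both eigenlines $\ell^s,\ell^u$: in the hyperbolic eigenbasis the shear is therefore not triangular, so the trace of $P\circ DG_w$ is a genuinely non-constant affine function of $g'$. I would localize on a closed orbit of the surgered flow meeting $C$ and compute its linearized Poincaré map as the product of the hyperbolic contribution of $P$ (eigenvalues $\lambda>1>\lambda^{-1}$) with the shear $DG_w$; when $-q/\epsilon$ exceeds an explicit threshold the trace drops below $2$ in absolute value, the eigenvalues collide and become complex, and the orbit becomes elliptic. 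A single non-hyperbolic periodic orbit contradicts the Anosov splitting, so the flow is not Anosov. Equivalently, once $-q/\epsilon$ is large a single pass of $DG_w$ rotates unit vectors of $E^{uu}$ across $\partial_s$ into $\mathcal C^s$, so no continuous cone field about $E^{uu}$ can be $P\circ DG_w$-invariant.

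The main obstacle is exactly this passage from a local clash to a global obstruction. Showing that the destabilizing shear forces a true failure of hyperbolicity, rather than a mere rearrangement of the splitting, requires producing and controlling an actual closed orbit of the surgered flow through $C$ and verifying that the large single-pass shear is not compensated by the hyperbolic holonomy accumulated between successive returns to $C$. The quantitative hypothesis ``$-q/\epsilon$ large'' is precisely what guarantees the one-step shear dominates this holonomy; making that domination precise, uniformly along the orbit, is the heart of the argument and the step I expect to be the most delicate.
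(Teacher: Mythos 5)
First, a point of comparison: the paper does not prove this theorem at all --- it is imported verbatim from \cite{FHV2} (note the attribution), so there is no internal proof to measure your attempt against; your proposal has to stand on its own. Judged that way, your part (1) is essentially the standard Goodman/Foulon--Hasselblatt preferred-direction argument: your frame computation $\mathcal{L}_X(V\pm H)=\mp(V\pm H)$ is correct, the traces of the weak foliations on $C$ do straddle $\partial_s$, and a cone-field argument that uses only the sign of $g'$ is the right mechanism and explains the ``every $\epsilon>0$'' claim, modulo the usual uniformity bookkeeping for orbits with long excursions between returns to $C$. One side claim overreaches: Anosovity does \emph{not} ``upgrade automatically'' to contact Anosovity for every $\epsilon$, since contactness of $\tilde\alpha$ is exactly where the thinness constraint $0<\epsilon<\delta/(2\pi q)$ enters in \fullref{sub:contact surgery}; part (1) as stated is a statement about the flow, not the contact form.

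Part (2) contains a genuine gap, and in fact the proposed mechanism fails in the stated regime. The trace of $P\circ DG_w$ is affine in $g'$ with nonzero slope (precisely because $\partial_s$ is transverse to both eigenlines), so as $-q/\epsilon\to\infty$ the trace tends to infinity in absolute value: the composition is elliptic only for a \emph{bounded window} of shear strengths and becomes hyperbolic again (with flipped eigenvalue signs) once the shear is large, which is the opposite of the theorem's asymptotic hypothesis ``$-q/\epsilon$ large enough.'' So even granting the closed orbit whose existence and holonomy control you correctly flag as unresolved, the single-pass trace computation cannot yield non-Anosovity for all large $-q/\epsilon$. Your fallback --- that no continuous cone field about the old $E^{uu}$ can be invariant --- is not an obstruction either: the Anosov splitting of the surgered flow need not lie near the original bundles, and indeed for $q>0$ huge the shear also moves the old cones drastically while the flow \emph{is} Anosov by part (1); cone-breaking relative to the unperturbed splitting detects the sign of the twist, not a failure of hyperbolicity. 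A correct proof of part (2) must produce a global obstruction rather than an orbitwise linear-algebra clash (this is consistent with how the present paper treats the negative regime: everything hinges on global contact-geometric data such as the slope in \fullref{SLOPE} and the contact condition in \fullref{FHn}, with non-Anosovity delegated entirely to \cite{FHV2}). As written, your part (2) would not close.
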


The following result shows that the geodesic $\gamma$ can be chosen in such a way that the result of the surgery is hyperbolic

\begin{theorem}[Calegari/Folklore \cite{FoHa1}]
Let $\gamma$ a closed, filling  geodesic in an hyperbolic surface $S$. Then the complement of its image in the unit tangent bundle of $S$ is hyperbolic.
\end{theorem}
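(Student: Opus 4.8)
The plan is to invoke Thurston's hyperbolization theorem for Haken manifolds with boundary: writing $M$ for the compact manifold obtained by deleting an open tubular neighborhood of the canonical lift $\hat\gamma\subset \mathrm{UT}S$ (the set of unit tangent vectors along $\gamma$), the boundary $\partial M$ is a single torus, and it suffices to prove that $M$ is irreducible, atoroidal, and not Seifert fibered. First I would record the ambient structure. Since $S$ is hyperbolic, $\mathrm{UT}S$ is the unit tangent circle bundle over the hyperbolic orbifold $S$, a Seifert fibered space with Euler number $\chi(S)<0$; in particular it is aspherical, irreducible, and (being a large Seifert fibered space over a hyperbolic base) its Seifert fibration is unique up to isotopy by Waldhausen.

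Irreducibility of $M$ is the easy step. Any embedded sphere in $M$ bounds a ball $B$ in $\mathrm{UT}S$ by irreducibility of the ambient manifold. If $B$ is disjoint from $\hat\gamma$ the sphere is inessential in $M$; if $B\supset\hat\gamma$ then $\hat\gamma$ would lie in a ball and be null-homotopic, contradicting that $\hat\gamma$ is a closed orbit of the geodesic flow projecting onto the essential curve $\gamma$. Hence $M$ is irreducible.

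The main work, and the principal obstacle, is atoroidality. Let $T\subset M$ be an incompressible torus, and split into cases according to whether $T$ is compressible in $\mathrm{UT}S$. If $T$ is incompressible in $\mathrm{UT}S$, then because $\chi(S)\ne 0$ the Seifert fibration admits no horizontal incompressible surface, so $T$ is isotopic to a vertical torus $\pi^{-1}(c)$ over an essential simple closed curve $c\subset S$. Here the filling hypothesis enters decisively: since $S\setminus\gamma$ is a union of disks, $\gamma$ meets every essential simple closed curve, so $\gamma\cap c\ne\emptyset$; at any intersection point $p$ the unit vector $\dot\gamma(p)$ lies in $\pi^{-1}(c)$, whence $\hat\gamma\cap\pi^{-1}(c)\ne\emptyset$, contradicting $T\subset M$. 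The remaining case, $T$ compressible in $\mathrm{UT}S$ but incompressible in $M$, is where I expect the genuine difficulty: compressing $T$ and using irreducibility, $T$ must bound a solid torus $V$ with $\hat\gamma\subset V$, and essentiality in $M$ forces $\hat\gamma$ to be a nontrivial satellite inside $V$. To exclude this I would again exploit that $\gamma$ is filling together with the explicit geometry of the frame $\{X,H,V\}$, arguing that such a knotting would produce an essential vertical annulus or torus in $\mathrm{UT}S$ that is disjoint from a filling curve, which is impossible by the same intersection argument.

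Finally, $M$ is not Seifert fibered: a Seifert fibration of $M$ would, by uniqueness of the Seifert structure on $\mathrm{UT}S$, have to be isotopic to the restriction of the bundle $\mathrm{UT}S\to S$, forcing $\hat\gamma$ to be isotopic to a regular fiber; but regular fibers project to points while $\hat\gamma$ projects onto $\gamma$, a contradiction. With irreducibility, atoroidality, and the absence of a Seifert fibration in hand, Thurston's theorem produces a complete hyperbolic structure on the interior of $M$, that is, on $\mathrm{UT}S\setminus\hat\gamma$. Throughout, the delicate point is the compressible-torus subcase of atoroidality, which is precisely where the filling property must be used most carefully.
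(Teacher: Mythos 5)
The first thing to note is that the paper does not prove this statement at all: it is quoted as a known result (attributed to Calegari/folklore) with the citation \cite{FoHa1}, where the proof is recorded. So your proposal can only be measured against the standard argument in the literature, whose skeleton you have correctly identified: pass to the compact exterior $M$ of the canonical lift $\hat\gamma \subset \mathrm{UT}S$, verify irreducibility, atoroidality, and non-Seifert-fiberedness, and invoke Thurston's hyperbolization for Haken manifolds with torus boundary. Your irreducibility argument is complete and correct, and the non-Seifert-fibered step is essentially right, modulo two small omissions: you should treat the subcase in which the meridian is the fiber slope of a putative Seifert fibration of $M$ (then the fibration does \emph{not} extend over the filling solid torus; instead the filled manifold would be reducible or a lens-space connected sum, contradicting the asphericity of $\mathrm{UT}S$), and you should note that $\mathrm{UT}S \to S$ has no exceptional fibers, so ``isotopic to a fiber'' forces the regular fiber class, which dies in $\pi_1(S)$ while $[\gamma]$ does not.

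The genuine problems are in atoroidality, and one of them is serious. First, in the case where $T$ is incompressible in $\mathrm{UT}S$, your conclusion is a non sequitur as written: $T$ is only \emph{isotopic} to the vertical torus $\pi^{-1}(c)$, and the isotopy may sweep across $\hat\gamma$, so the fact that $\pi^{-1}(c)$ meets $\hat\gamma$ does not contradict $T \cap \hat\gamma = \emptyset$. This is repairable: apply the inverse ambient isotopy to $\hat\gamma$ to get a knot $\hat\gamma'$ isotopic to $\hat\gamma$ and disjoint from $\pi^{-1}(c)$; then $\pi(\hat\gamma')$ is a loop in $S \setminus c$ freely homotopic to $\gamma$, whence $i(\gamma,c)=0$ because closed geodesics realize the minimal geometric intersection number in their free homotopy classes --- and \emph{that} contradicts filling. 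Second, and more importantly, the compressible case is a real hole, and the plan you sketch for it cannot work. The correct reduction is: every compressing disk for $T$ in $\mathrm{UT}S$ must meet $\hat\gamma$ (else $T$ compresses in $M$), so $T$ bounds a solid torus $V$ with $\hat\gamma \subset V$; boundary-parallelism rules out $\hat\gamma$ being a core of $V$, and since $[\hat\gamma]$ is conjugate to $[\mathrm{core}]^{w}$ in $\pi_1(\mathrm{UT}S)$ with $[\hat\gamma]\neq 1$, the winding number satisfies $w\neq 0$ (and $|w|=1$ if $\gamma$ is primitive). So what must be excluded is that $\hat\gamma$ sits in a knotted solid torus $V$ as a winding-number-one satellite (a Mazur-type pattern, say). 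Your proposed remedy --- producing ``an essential vertical annulus or torus in $\mathrm{UT}S$ disjoint from a filling curve'' --- has no purchase here, precisely because the companion torus $T=\partial V$ is \emph{compressible} in $\mathrm{UT}S$ and hence not isotopic to a vertical torus, so the intersection argument from the incompressible case cannot be run on it; purely homotopy-theoretic data cannot distinguish such a satellite from a core. Excluding this configuration is where the actual content of the theorem lies, and it requires input beyond the Seifert-fibered topology of $\mathrm{UT}S$ that your proposal does not supply; for the argument, consult the proof recorded in \cite{FoHa1}.
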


Note that the Birkhoff torus corresponding to a closed, filling  geodesic is self intersecting. However, for $q>0$ we can chose $\epsilon>0$ small enough to ensure that the surgery annulus $C_{\epsilon}$ is embedded in $M$.

\section{bi-contact surgery on Anosov flows} 
\label{sec:Local}

We introduce a new type of Dehn surgery on an Anosov flow with weak orientable invariant foliations. Under this assumption the flow is defined by a co-oriented bi-contact structure $(\xi_-,\xi_+)$. Our construction is defined in a neighborhood $N$ of a knot $K$ that is simultaneously Legendrian for $\xi_-$ and transverse for $\xi_+$. Furthermore we require that in $N$ the Reeb vector field of $\alpha_-$ is contained in $\xi_+=\ker \alpha_+ $. As Hozoori shows in \cite{Hoz2}, if the flow is Anosov and volume preserving there is a bi-contact structure $(\ker \alpha_-= \xi_-, \ker \alpha_+=\xi_+)$ such that $R_{\alpha_-}\in \ker \alpha_+=\xi_+$ and $R_{\alpha_+}\in \ker \alpha_-=\xi_-$, therefore the above condition is verified for every Legendrian-transverse knot $K$. On the other hand, if the flow is just Anosov Hozoori \cite{Hoz2} shows that the condition $R_{\alpha_-}\in \ker \alpha_+=\xi_+$ can be always achieved in a neighborhood of a closed orbit. 
\begin{theorem}[Hozoori \cite{Hoz2}]
\label{H1}
Let $\phi^t$ be a $C^1$-Hölder Anosov flow with orientable weak invariant foliations and $\gamma$ a periodic orbit. There is a pair of contact forms $(\alpha_-,\alpha_+)$, such that $(\ker \alpha_-,\ker \alpha_+)$ is a supporting bi--contact structure for $\phi^t$ and, in a neighborhood $N$ of $\gamma$, the Reeb vector field $R_{\alpha_-}$ of $\alpha_-$ satisfies the condition $\alpha_+(R_{\alpha_-})=0$.
\end{theorem}

We use the following to construct a Legendrian-transverse knot from a closed orbit $\gamma$.

\begin{corollary}
\label{Ltpo}
Let $\gamma$ be a closed orbit of a flow defined by a bi-contact structure $(\ker \alpha_- =\xi_-,\ker \alpha_+=\xi_+)$ such that in a neighborhood $N$ of $\gamma$ the Reeb vector field $R_{\alpha_-}$ belongs to $\ker \alpha_+=\xi_+$. The knot $K=\phi^t(\gamma)$ is a Legendrian-transverse knot for $0<t<1$ sufficiently small.
\end{corollary} 

\begin{definition}
Consider a closed orbit $\gamma$ and a pair of contact forms $(\alpha_-,\alpha_+)$ such that in a neighborhood $N$ of $\gamma$ the Reeb vector field $R_{\alpha_-}$ belongs to $\ker \alpha_+=\xi_+$. We call a {\it Legendrian-transverse push-off of $\gamma$} a knot $K$ that is simultaneously Legendrian for $\xi_-=\ker \alpha_-$ and transverse for $\xi_+=\ker \alpha_+$ obtained by translating $\gamma$ using the flow of $R_{\alpha_-}$ (see \fullref{bundles3}).

\end{definition}
\begin{figure}
\label{bundles3}

\includegraphics[width=0.9\textwidth]{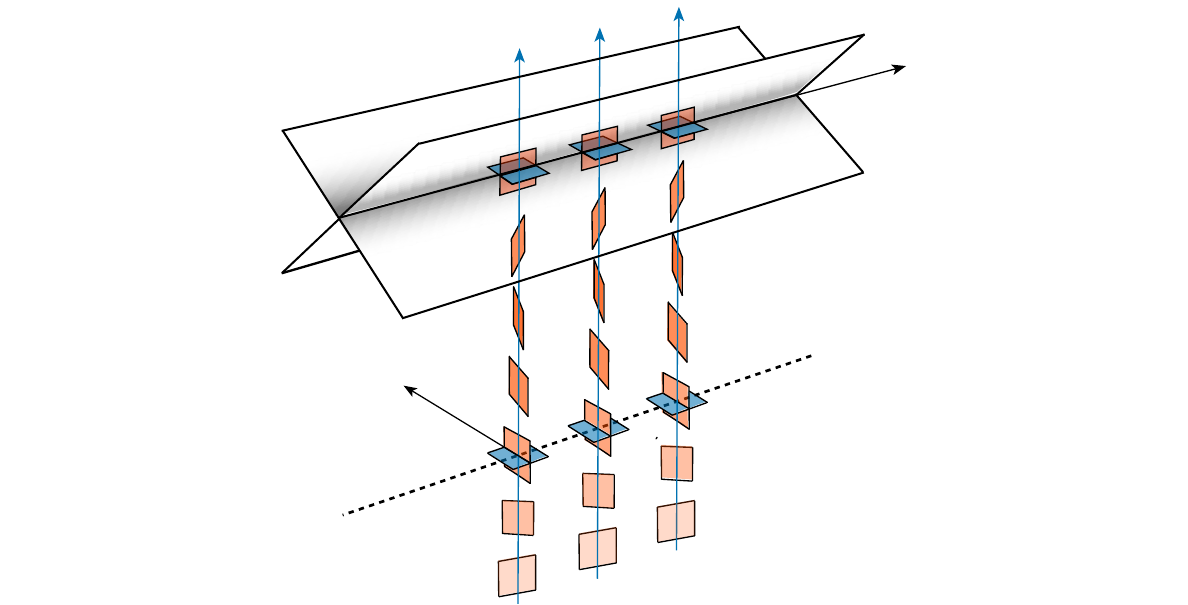}
  \caption{Legendrian-transverse push-off of a closed orbit. In blue $R_{\alpha_- }$. In black, the Anosov flow.}
\label{bundles3}
\end{figure}

\subsubsection{Sketch of the construction of a bi-contact surgery}  Let $K$ be a Legendrian-transverse knot such that in a neighborhood $N$ of $K$ we have $R_{\alpha_-}\in \ker \alpha_+$. Since $K$ is Legendrian for $\xi_-$ we can apply Foulon--Hasselblatt contact surgery to the pair $(\alpha_-,R_{\alpha_-})$ using a shear $F$ on a surgery annulus $A_0$ constructed by translating the knot $K$ using the Anosov flow $\phi^t$. Note that $A_0$ is transverse to $R_{\alpha_-} $ and tangent to $X$. After surgery we have a new contact form $\tilde{\alpha}_-$ with Reeb vector field $R_{\tilde{\alpha}_-}$ that is collinear to $R_{\alpha_-}$ in $M\setminus A_0$. Consider now the contact structure $\ker \alpha_+=\xi_+$ transverse to the knot $K$. If the surgery coefficient $q$ is positive we will show that there is a new contact form $\tilde{\alpha}_+$ such that the Reeb vector field $R_{\tilde{\alpha}_-}$ belongs to $\ker \tilde{\alpha}_+=\tilde{\xi}_+$. 

Suppose now that the initial defining bi-contact structure is such that $R_{\alpha_-}\in \ker \alpha_+$ everywhere (therefore Anosov by \fullref{cond:Anosov}). The new bi-contact structure $(\ker \tilde{\alpha}_-,\ker \tilde{\alpha}_+)$ has the same property, therefore the supported vector field $\tilde{X}$ is Anosov. 
\begin{remark}
The condition $R_{\alpha_-}\in \ker \alpha_+$ {\it everywhere}, can be always achieved in a volume preserving Anosov flow (see \fullref{VPflows}). At the time of writing this work it is not known if the condition can be achieved on any Anosov flow with orientable weak foliations. 
\end{remark}

\subsection{Definition of the bi-contact surgery} We first construct a special coordinate system in a neighborhood $N$ of the Legendrian-transverse knot $K$ where two supporting contact forms $(\alpha_-,\alpha_+)$ take a very simple expression.

\begin{figure}

\includegraphics[width=0.9\textwidth]{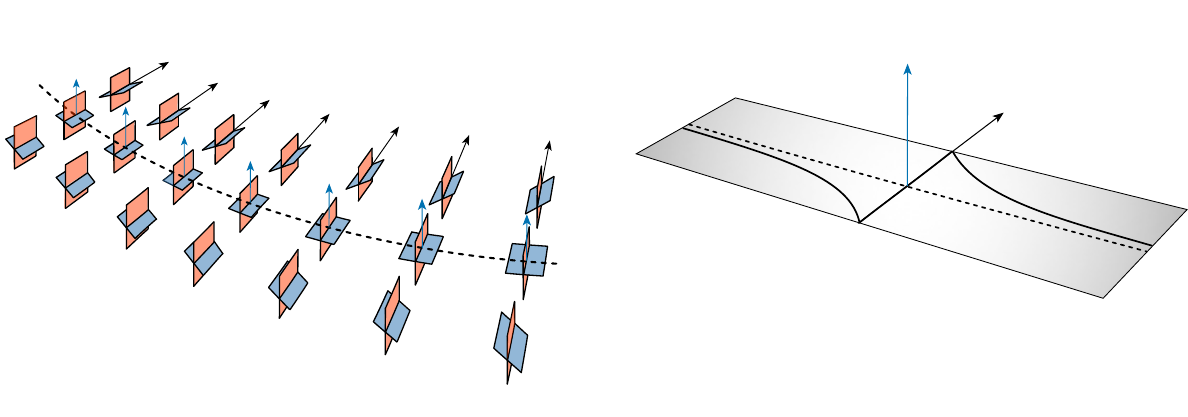}
  \caption{The surgery annulus $A_0$ spanned by the flow. The dashed line is the Legendrian-transverse knot $K$. In blue $R_{\alpha_- }$. In black, the Anosov flow.}
\label{fig:Coordinates0}
\end{figure}
 
 \begin{lemma}
\label{lem:coordinate system}
Suppose $M$ is a 3-manifold endowed with a projectively Anososv flow $\phi^t$ defined by a bi-contact structure $(\xi_-,\xi_+)$ and consider a knot $K$ that is simultaneously Legendrian for $\xi_-$ and transverse for $\xi_+$. Suppose also that there is a contact form $\alpha_-$ defining $\xi_-$ such that the Reeb vector field of $\alpha_-$ is contained in $\xi_+$ in a neighborhood $N$ of $K$.
We can choose $N=A_0\times(-\epsilon,\epsilon)$ equipped with coordinates $(s,v,w)$ and a contact form $\alpha_+$ supporting $\xi_+$ such that in $N$ we have
$$\alpha_-=dw+v\:ds$$
$$\alpha_+=ds-b(s,v,w)\:dv\;\;\;with \;\;\alpha_+=ds \;\;on \;\;A_0 .$$
Here $s$ and $v$ are coordinates on $A_0$ such that $s\in S^1$ is the parameter describing $K$ and $v\in(-\delta,\delta)$ with $\delta\in \mathbb{R}^+$ while $w\in (-\epsilon,\epsilon)$ is a transverse parameter to $A_0$.

\end{lemma}

\begin{proof}
The construction of the surgery annulus $A_0$ is straightforward. We start with the Legendrian-transverse knot $K:S^1\rightarrow M$ and we set the tangent annulus $$A_0=\bigcup_{t\in [-\tau,\tau]}\phi^t(K)$$ spanned by the flow $\phi^t$ for $t\in[-\tau,\tau]$ with $\tau>0$. Let $\alpha_-$ be the 1-form specified in the statement of the lemma. Since $K$ is Legendrian we can choose $\tau$ such that the Reeb vector field $R_{\alpha_-}$ is transverse to $A_0$ (see \fullref{fig:Coordinates0}).

Let $\phi^w$ the flow of the Reeb vector field $R_{\alpha_-}$ of $\alpha_-$. There is an auxiliary coordinate system $(s,u,w)$ in a neighborhood of $A$ such that 
$$\alpha_-=dw+a(s,u)\:ds.$$
Since $K$ is a Legendrian knot, we have $a(s,0)=0$ and since $\frac{\partial}{\partial u}a(s,u)\neq 0$, the transformation of coordinates $$(s,u)\rightarrow (s,a(s,u))=:(s,v)$$
is non singular and therefore we have (see \cite{FoHa1})
$$\alpha_-=dw+v\:ds.$$ 
Since the flow lines of $R_{\alpha_-}$ are Legendrian for $\xi_+$ by hypothesis and the $v$-curves are transverse to $\xi_+$ in a neighborhood $N$ of $K$, there is a contact form $\alpha_+$ supporting $\xi_+$ that in $N$ can be written
$$\alpha_+=ds-b(s,v,w)\:dv.$$
The $v$-curves do not describe the same codimension two foliation described by the flowlines of $X$. However, the foliations coincide on the surgery annulus. Therefore  $b(s,v,0)=0$ and $\alpha_+=ds$.

\end{proof}

\begin{lemma}
\label{LL}
In the hypothesis of \fullref{lem:coordinate system} the contact form $\alpha_+^0=ds-b(s,v,w)\:dv$ can be isotoped along the flow lines of $R_{\alpha_-}$ to a contact form independent on the $s$-coordinate $\alpha_+^1=ds-b(v,w)\:dv$.

\end{lemma}

\begin{proof}

Consider the minimum value $b_{min}(v,w)$ taken by $b(s,v,w)$ along the $s$-coordinate. Fixing $w=\overline{w}$ and $k>0$ small we have $b(s,v,\overline{w}+k)>b_{min}(v,\overline{w})$. Smoothly interpolate $b(s,v,w)$ and $b_{min}(v,w)$ in the interval $(\overline{w},\overline{w}+k)$ and in a neighborhood of $v=\pm \delta$ in such a way that the resulting interpolation $b_{int}(s,w,v)$ is decreasing if $w$ decreases. We define the new $b$ to be $b(s,w,v)$ for $w>\overline{w}+k$, to be $b_{int}(s,w,v)$ in $(\overline{w},\overline{w}+k)$ and to be $b_{min}(v,w)$ in $w<\overline{w}$. In a slightly smaller neighborhood $N'\in N$ we have $b=b_{min}(v,w)$ independent on $s$, therefore
$$\alpha_+^1=ds-b(v,w)\:dv.$$
Note that the above procedure relies on the fact that $b=0$ on the surgery annulus $A_0$.
\end{proof}

\begin{remark} 
If the initial flow $\phi^t$ is defined by a pair of contact forms $(\alpha_-,\alpha_+^0)$ such that $R_{\alpha_-}\in \ker \alpha_+^0$, $\phi^t$ is Anosov there is a path of Anosov flows connecting $\phi^t$ to the flow $\psi^t$ defined by $(\alpha_-,\alpha_+^1)$. In particular $\phi^t$ and $\psi^t$ are orbit equivalent by a diffeomorphism isotopic to the identity (see \fullref{LS}).
\end{remark}

\subsubsection{Deformations and gluings }
\label{glu}
Our procedure can be thought as first cutting the manifold $M$ along the annulus $A_0$ and then gluing back the two sides of the cut in a different way. In particular, we glue the points described by the coordinates $(s,v)$ on the side of the cut $A_0^-$ where $w\leq 0$ to points $(s+f(v),v)$ on $A_0^+$, the side of the cut that bounds  the region where $w\geq 0$. We define 
$$f:[-\delta,\delta]\rightarrow S^1,\;\;\;v\rightarrow f(v),$$ 
where $q\in \mathbb{Z}$ and $f:\mathbb{R}\rightarrow [0,2\pi]$
non increasing in $(-\delta,\delta)$, smooth, and such that $f((-\infty,-\delta))=0$, $f([\delta,\infty))=-2q\pi$. 
\begin{remark}
Note that the function $f$ used here to define the shear is {\it non-increasing} while the function used in \fullref{sec:Foulon} to define Foulon-Hasselblatt construction is non-decreasing. This is because Foulon-Hasselblatt construction was originally performed along a transverse annulus $C$ while the Legendrian-transverse surgery is performed on a an annulus $A_0$ tangent to the flow. With this convention if $K\subset C\cap A_0$ the two operations produce homeomorphic manifolds for the same $q$.
\end{remark}

As noticed in \fullref{sub:contact surgery} the shear $$F:A_0\rightarrow  A_0,\; (s,v)\rightarrow (s+f(v),v)$$
defines a smooth new $3$-manifold $\tilde{M}$
but it does not preserve the contact form $\alpha_-$ since on the surgery annulus $A_0$
$$F^*(\alpha_-)^+=dw+v\:d(s+f(v))=(\alpha_-)^-+vf'(v)\:ds.$$ An analogous calculation shows that $\alpha_+$ is not preserved either since
$$F^*(\alpha_+)^+=F^*ds=d(s+f(v))=ds+f'(v)\:dv=(\alpha_+)^-+f'(v)\:dv.$$
The strategy is to deform each of the contact structures independently on $M\setminus A_0$ in such a way that they remain transverse to each other and after the application of the shear they define two new (transverse) contact structures (see \fullref{fig:Defor3}).

\begin{figure}

\includegraphics[width=1\textwidth]{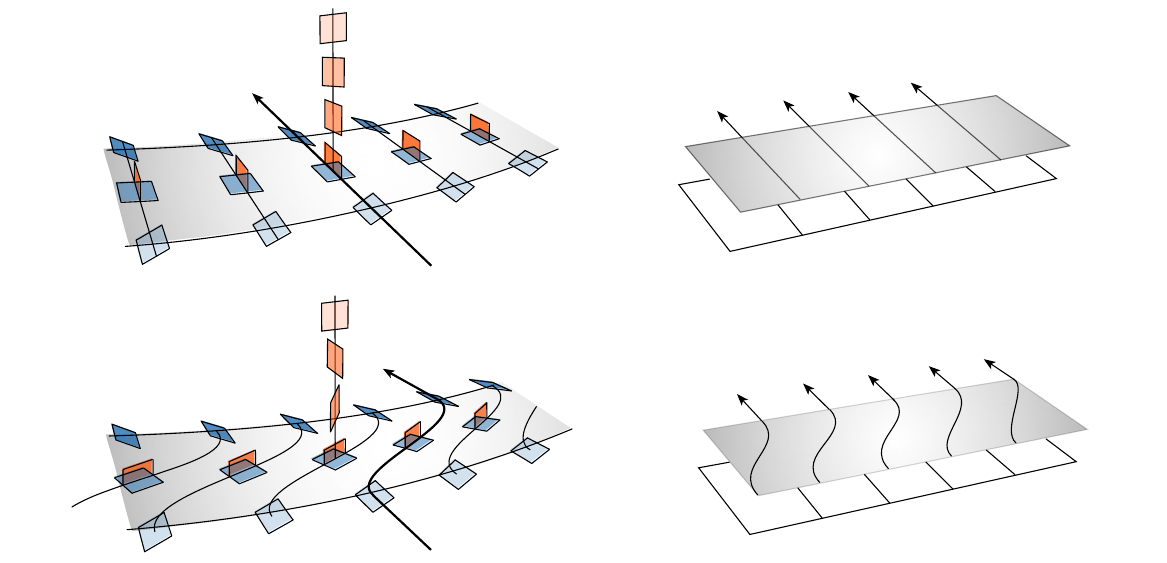}
  \caption{On the top left: the bi-contact structure of the original flow. On the bottom left: the bi-contact structure after deformation. On the right, the corresponding flow-lines on the two sides of the surgery annulus $A_0$.    }
\label{fig:Defor3}
\end{figure}

The deformation that we apply to $\alpha_-$ is the same one used by Foulon and Hasselblatt to define their contact surgery.

 We set $(\tilde{\alpha}_-)^+=(\alpha_-)^+- dh$ where 
 
 $$h(v,w)=\lambda_1(w) \int_{-\delta}^{v}xf'(x)\;dx.$$

Here $\lambda_1:\mathbb{R}:\rightarrow [0,1]$ is a smooth bump function supported in $(0,\epsilon)$ such that $\lambda_1(0)=1$ and $\left| \lambda_1' \right|<\frac{1}{\epsilon}+\iota$ with $\iota$ arbitrarily small. Note that in a neighborhood of $A$
we have \cite{FHV2} $$\tilde{\alpha}_-=dw+v\:ds- f'(v)v\:dv,$$ 
therefore $$F^*(\alpha_--dh)^+=(\alpha_-)^-$$
and we have a well defined 1-form on the new manifold $\tilde{M}$.

The deformation that we apply to $\alpha_+$ is defined as follows. We set $\tilde{\alpha}_+=\alpha_+-\sigma$ and consider the 1-form 
\begin{equation}
\label{deformation}
\sigma=\lambda_2(w)\:f'(v)\:dv
\end{equation}
where $ \lambda_2: \mathbb{R}\rightarrow [0,1] $ is a smooth bump function, with support in $(0,\epsilon)$ that take the value $1$ in a neighborhood of $0^+$ and value $0$ in a neighborhood of $\epsilon^-$ . More explicitly 

$$\tilde{\alpha}_+=ds-(b(v,w)+\lambda_2(w)\:f'(v))\;dv.$$ 
 Note that since $f'(v)$ has support in $A_0$ the 1-form $\sigma$ is smooth with support in the interior of $N=A_0\times [-\epsilon,\epsilon]$.

We check that on the surgery annulus $A_0$ we have $F^*(\alpha_+-\sigma)^+=(\alpha_+)^-$. 
 A direct computation shows that 
$$F^*(\tilde{\alpha}_+)^+=F^*(ds-b\;dv-\lambda_2(w)\:f'(v)\;dv)=ds+f'(v)\;dv-b\;dv-f'(v)\;dv=(\alpha_+)^-$$ therefore $\tilde{\alpha}_+$ is a well defined $1$-form on $\tilde{M}.$

The plane field distribution $\tilde{\xi}_+=ker(\tilde{\alpha}_+)$ and the plane field distribution $\tilde{\xi}_-=ker(\tilde{\alpha}_-)$ are transverse.

\begin{theorem}
\label{bicond}
Let $(\xi_-=\ker \alpha_-,\xi_+=\ker \alpha_+)$ a bi-contact structure such that in a neighborhood of a knot $K$ that is simultaneously Legendrian for $\ker \alpha_-$ and transverse for $\ker \alpha_+$ we have $R_{\alpha_-}\subset \ker \alpha_+$. If $q>0$ a bi-contact $(1,q)$-surgery along $K$ produces a new bi-contact structure $(\tilde{\xi}_-=\ker \tilde{\alpha}_-,\tilde{\xi}_+=\ker \tilde{\alpha}_+)$. Moreover $R_{\tilde{\alpha}_-}\subset \ker \tilde{\alpha}_+$
\end{theorem}

\begin{proof}

We first show that the $(1,q)$-surgery produces a 1-form $\tilde{\alpha}_-$ that is contact for every $q\in \mathbb{Z}$. This is done using the contact surgery introduced by Foulon and Hasselblatt in \cite{FoHa1} and interpreting the surgery annulus $A_0$ as a transverse annulus to the Reeb vector field of $\alpha_-$. We use a slightly different argument to the one used by Foulon and Hasselblatt. In particular we show that we do not need any bound on the derivative of $f$ in the definition of the shear in $\ref{glu}$. As shown in \cite{FoHa1} we have 

$$\tilde{\alpha}\wedge d\tilde{\alpha}=(-1+\frac{\partial h}{\partial w})\:dV,$$ 
with $dV=ds\wedge dv\wedge dw$. Therefore the condition $\left|\frac{\partial h}{\partial w}\right|<1$ ensures that $\tilde{\alpha}$ is contact.

$$\left|\frac{\partial h}{\partial w}\right|=\left|\lambda'_1 \int_{-\delta}^{v}xf'(x)\;dx\right|,$$
integrating by parts and since $f(-\delta)=0$ for every $q\in \mathbb{Z}$ we get $$\left|\frac{\partial h}{\partial w}\right|=\left|\lambda'_1 (vf(v)+\delta f(-\delta)-\int_{-\delta}^{v}f(x)\;dx)\right|= \left|\lambda'_1 (vf(v)-\int_{-\delta}^{v}f(x)\;dx)\right|$$ 
since $\left|\lambda'_1\right|<\frac{1}{\epsilon}+\iota$ with $\iota$ arbitrarily small, $v<\delta$, $\left|f(v)\right|<2\pi \left|q\right|$ and $\int_{-\delta}^{v}f(x)\;dx)< 2\pi \delta $, we have
$$\left|\frac{\partial h}{\partial w}\right|<\frac{1}{\epsilon} \delta 2\pi (\left|q\right|+1)<1.$$
This shows that the 1-form $\tilde{\alpha}_-$ is contact if

\begin{equation}
 \label{6}
0<\delta <\frac{\epsilon}{2\pi (\left| q \right|+1)}.
\end{equation}

\begin{remark}
Note that condition (\ref{6}) holds independently of the sign of $q$ and the positivity of $\alpha_-$. Moreover $\delta$ can be chosen arbitrarily small, in particular the tangent surgery annulus $A_0$ can be chosen arbitrarily thin.
\end{remark}

We now show that for $q>0$, the 1-form $\tilde{\alpha}_+$ defines a contact form on $\tilde{M}$. 
The 1-form $\tilde{\alpha}_+$ defines a positive contact form if and only if \cite{ElTh}

\begin{equation}
\label{anosovity condition}
\frac{\partial b}{\partial w}+ \lambda_2'(w)\:f'(v)>0.
\end{equation}
Since $\frac{\partial b}{\partial w}>0$ by the contact condition and $\lambda_2'(w)\leq 0$ by the definition of the bump function $\lambda_2$, the inequality is always satisfied if $f'(v)\leq 0$. Therefore, the family of $(1,q)$-Dehn surgeries produces bi-contact structures for $q>0$.

\end{proof}

The proof of \fullref{bicond} shows that there is a choice of the direction of the twist that strengthens the contact condition of the contact structure transverse to $K$. Performing the surgery in the opposite direction may result in a violation of the contact condition.
An analogous phenomenon emerges in Goodman surgery (see \cite{HaTh}, \cite{FHV2} and \cite{Bar1}) where there is a choice of the direction of the shear that weaken the hyperbolicity of the flow. The following result shows that there is indeed a strong connection between the Anosovity of the new flows produced by the bi-contact surgery and contact geometry.

\begin{maintheorem}
\label{Anosovity}

Let $M$ be a $3$-manifold endowed with a bi-contact structure $(\ker \alpha_-,\ker \alpha_+)$ such that $R_{\alpha_-}\in \ker \alpha_+$ everywhere. If the Legendrian-transverse surgery yields a new bi-contact structure it yields an Anosov flow.
\end{maintheorem}

\begin{proof}
The proof is a straightforward application of Hoozori criterion of Anosovity. Since by construction we have $R_{\tilde{\alpha} _-}\subset \tilde{\xi}_+$ the Reeb vector field $R_{\tilde{\alpha}}$ is dynamically positive everywhere (\fullref{cond:Anosov}).
\end{proof}

\begin{figure}
\labellist 
     \begin{footnotesize}
     \pinlabel $A_0$ at 170 45
     \pinlabel $A_{\epsilon}$ at 170 185
    \pinlabel $K$ at 80 69
    \pinlabel $C_{out}$ at 50 140
    \pinlabel $C_{in}$ at 140 43
     \pinlabel $\phi^w(K)$ at 107 174
     \end{footnotesize}
\endlabellist
\includegraphics[width=1\textwidth]{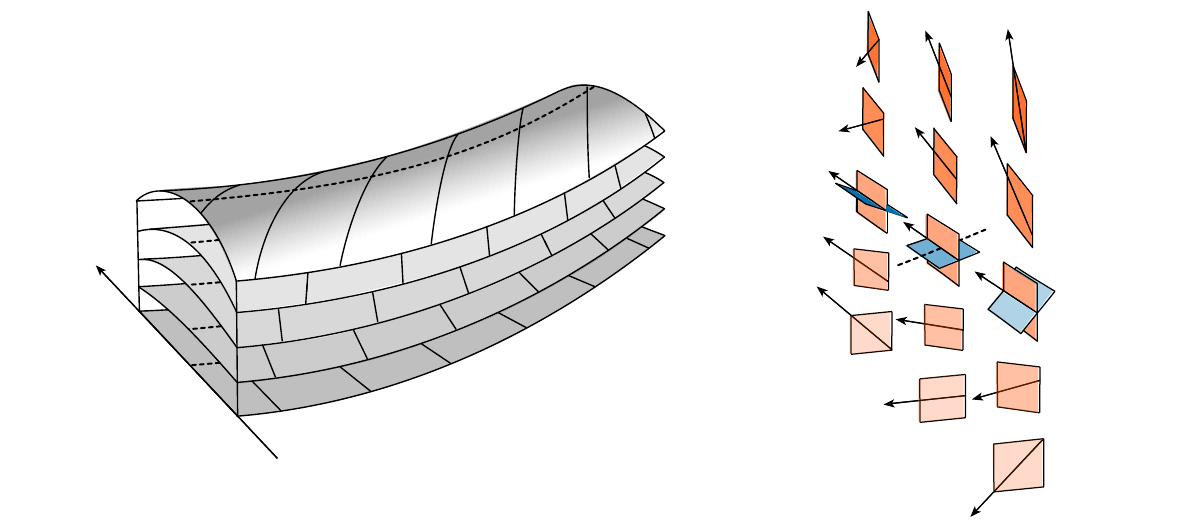}
  \caption{On the right: the red contact structure rotates along the $w$ axis. As usual, the Legendrian-transverse knot is denoted by a dashed line.
On the left: the annuli $A_w$ spanned by the flow.}
\label{fig:Coordinates4}
\end{figure}

\section{New Anosov flows for negative surgery coefficients} 
\label{sec:New contact flows}

We now give a sufficient condition for a bi-contact $(1,q)$-surgery to yield an Anosov flow when the operation is performed in the direction that weaken the hyperbolicity. The condition is purely contact geometric and involves the behaviour of the bi-contact structure along the boundary of a neighborhood $N$ of a Legendrian-transverse knot $K$. We remark that the hypothesis of the next theorem can be always achieved in a volume preserving Anosov flow. 

\begin{theorem}
\label{SLOPE}
Let $K$ be a Legendrian-transverse knot $K$ in a bi-contact structure $(\xi_-=\ker \alpha_-,\xi_+=\ker \alpha_+) $ defining an Anosov flow, such that $R_{\alpha_-}\subset \xi_+$ everywhere. There is a neighborhood $N$ of $K$ such that if $k$ is the slope of the characteristic foliation induced by $\xi_+$ on $\partial N$ a bi-contavt $(1,q)$-surgery with $q>-k$ yields an Anosov flow.  

\end{theorem}

\begin{proof}

Consider a flow-box neighborhood $N$ of $K$ bounded by the surgery annulus $A=A_0$, 
the transverse annuli $C_{in}$ and $C_{out}$ constructed by flowing the boundary components of $A_0$ using the flow of $R_{\alpha_-}$, and on the top, by an annulus (see \fullref{fig:Coordinates4}).

$$A_{\epsilon}=\bigcup_{t\in [-\tau,\tau]}\phi^t(\phi^{\epsilon}(K)).$$ 
 
After an orbit equivalence isotopic to the identity \footnote{Using a slightly different definition of $k$ we can avoid to perform this orbit equivalence.} as in \fullref{LL} we assume that the expression of $\alpha_+$ in lemma does not depend on the $s$-coordinate. Therefore the characteristic foliation $\mathcal{G}$ induced by $\xi_+$ on $\partial N$ does not depend on the $s$-coordinate. Moreover, without loss of generality, we assume that $\mathcal{G}$ is composed by closed curves homotopic to a curve $r\mu+sK$ where $\mu$ is a meridian of $\partial N$. We define the slope of the characteristic foliation $\mathcal{G}$ as $k:=\frac{s}{r}>0$. Let $q$ be the integral part of $-k$ if $-k$ is not an integer and set $q=-k+1$ if $-k$ is an integer. Since $q>-k$ there is a family of curves $f_q(v):[-\delta,\delta]\rightarrow S^1$ on $A_0$ with $f'_q(-\delta)=f'_q(\delta)=0$ such that its projection $\pi_w(f_s)$ on $A_{\epsilon}$ along the $w$-curves positively intersects the curves of the characteristic foliation $\mathcal{G}|_{\A_0}$ obtained by restricting $\mathcal{G}$ to $A_{\epsilon}$ and such that the foliation on $\partial N$ constructed by replacing $\mathcal{G}|_{\A_0}$ with the curves $\pi_w(f_q(v))$ has slope $q$. The shear $$F:A_0\rightarrow A_0,\;\;\;(s,v)\rightarrow (s+f_q(v),v)$$ induces a $(1,q)$-Dehn surgery.  After gluing, we obtain a new neighborhood $N$ with coordinates $(s,v,w)$ such that the projection along the $w$-curves of the characteristic foliation induced on $A_{\epsilon}$ by $\xi_+$ positively intesects the characteristic foliation induced by $\xi_+$ on $A_0$. Therefore we can extend the $C^1$-contact structure $\xi_+$ defined on $M\setminus N$ to a $C^1$-contact structure $\tilde{\xi}_+$ on $M$ whose planes positively rotate along the $w$-curves.  
Since the $w$-curves are flow lines of $R_{\alpha_-}$, Anosovity follows by Hozoori's criterion (see \fullref{fig:Coordinates5}.)

\end{proof}

\begin{corollary}
In a neighborhood of a closed orbit $\gamma$ of a flow defined by a bi-contact structure $(\xi_-=\ker \alpha_-,\xi_+=\ker \alpha_+) $ such that $R_{\alpha_-}\subset \xi_+$ everywhere, there is a fixed tangent annulus $A_0$  such that for every $q\in \mathbb{Z}$ a Legendrian-transverse $(1,q)$-surgery yields an Anosov flow for every $q\in \mathbb{Z}$.  
\end{corollary}

\begin{proof}
We have to prove the statement just for $q<0$. By \fullref{VPflows} there is a bi-contact structure $(\ker \alpha_-,\ker \alpha_+)$ such that $R_{\alpha_-}\in \ker \alpha_+$. Therefore there is a knot $K$ that is a Legendrian-transverse push-off of $\gamma$. 
Let $\epsilon_{\gamma}$ be the value of the parameter $w$ defined by the flow of $R_\alpha$ along $\gamma$. Construct a flow-box neighborhood as in \fullref{SLOPE}. For $\epsilon$ approaching the value  $\epsilon_{\gamma}$ we have  $k\rightarrow \infty$ for a fixed $A_0$. Therefore for every $k>0$ there is a flow-box neigborhood $N_k$ as in \fullref{SLOPE} bounded below by the annulus $A_0$.
\end{proof}

\begin{figure}

\includegraphics[width=1\textwidth]{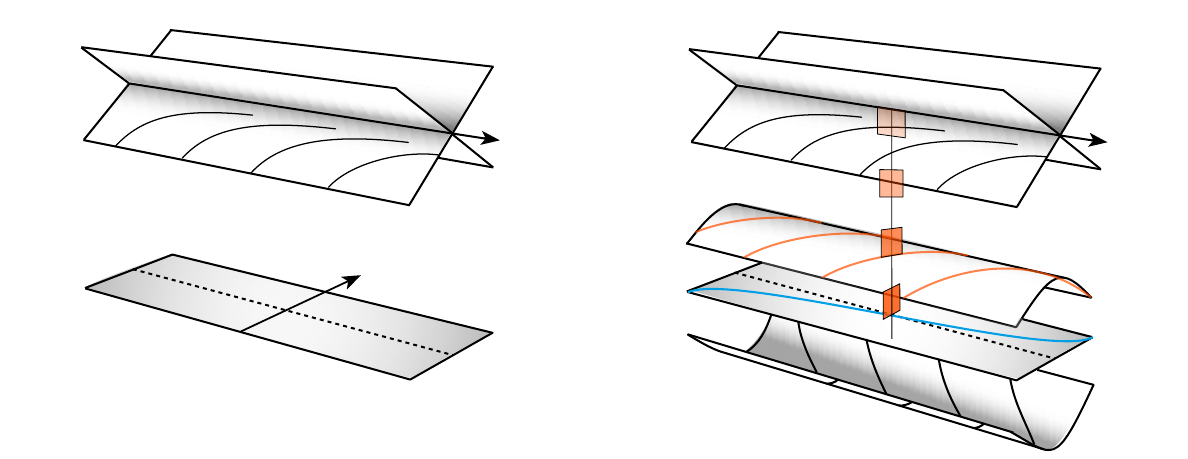}
  \caption{In orange, the contact structure field $\xi_+$ in the proximity of a closed orbit. The orange line are the leaves of the characteristic foliation induced by $\xi_+$ on $A_{\epsilon}$.}
\label{fig:Coordinates5}
\end{figure}

\begin{remark}
\label{Rem}
In a neighborhood $N$ of a closed orbit $\gamma$ of a flow defined by a bi-contact structure $(\xi_-=\ker \alpha_-,\xi_+=\ker \alpha_+) $ such that $R_{\alpha_-}\subset \xi_+$, there is a sequence of nested neighborhoods of the Legendrian-transverse knot $N_{-1}\subset N_{-2}\subset \cdots\subset N_q\subset \cdots\subset N_{-\infty}$ where $$N_{q}=\bigcup_{w\in [0,\epsilon_q]}A_{w},$$
where $$A_{w}=\bigcup_{t\in [-\tau,\tau]}\phi^t(\phi^{w}(K))$$
where $\epsilon_q$ is a sufficiently close to $\epsilon_{\gamma}$. Moreover
$$N_{-\infty}=\overline{\bigcup_{w\in [0,\epsilon_{\gamma})}A_{w}}.$$ 
All the neighborhood are bounded on one side by $A_0$. For $q\in \mathbb{Z}^-$ the neighborhood $N_q$ is a flow-box (see \fullref{fig:Coordinates4} and \fullref{bundle7}).

\end{remark}

\section{Relations with Goodman surgery} % (fold)
\label{sec:Relations}

In this chapter we study the relationship between the bi-contact surgery and the surgery introduced by Goodman \cite{Goo}.

At a first glance the two operations look really different. First of all the surgery annulus used in the bi-contract surgery is tangent to the generating vector field $X$ while in Goodman's construction we use an annulus transverse to the flow. Secondly, in the bi-contact surgery construction the deformation of the bi-contact structure induces a $3$-dimensional perturbation of the flow in a neighborhood of the surgery annulus. On the other hand, in Goodman's surgery the flow lines of the new flow coincide to the old ones outside the (transverse) surgery annulus. Despite these differences there are always choices such that the flows generated by the bi-contact surgery and the ones generated by Goodman's construction are orbit equivalent.

\begin{figure}
\labellist 
     \begin{footnotesize}
     \pinlabel $A_0$ at 130 35
     \pinlabel  $C_{out}$ at 192 80
     \pinlabel $v$ at 220 35
    \pinlabel $w$ at 67 110
    \pinlabel $A_{\epsilon}$ at 100 85
    \pinlabel $C_{in}$ at 40 70
      \pinlabel $s$ at 210 10
     \end{footnotesize}
\endlabellist
\includegraphics[width=1\textwidth]{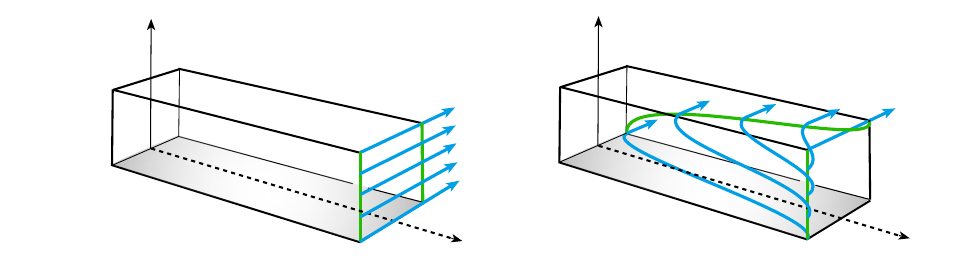}
  \caption{The flow-box $N$. For simplicity we depicted the top layer $A_{\epsilon}$ as flat.  The two vertical green segment in the left picture represent two leaves of $\mathcal{F}$ while on the right are represented two leaves of the new foliation $\mathcal{\tilde{F}}$. A negative Dehn twist on $A_0$ (in blue) corresponds to a positive Dehn twist on $C_{out}$ (in green). Therefore, a flow constructed by a $(1,q)$-bi-contact surgery along $A_0$ is orbit equivalent to a flow generated by $(1,q)$-Goodman surgery along $C_{out}$. }
\label{fig:Coordinates3}
\end{figure}

\begin{theorem}
\label{equivalence}

Suppose that $K$ is a Legendrian-transverse knot in a bi-contact structure $(\xi_-,\xi_+)$ defined by two contact forms $(\alpha_-,\alpha_+)$ such that in a neighborhood $N$ of $K$ the Reeb vector field $R_{\alpha_-}$ is contained in $\xi_+$. A flow generated by a bi-contact $(1,q)$-surgery along the tangent annulus $A_0$ is orbit equivalent to a flow generated by a $(1,q)$-Goodman surgery along a transverse annulus $C$. 

\end{theorem}
\begin{proof}

Let $N$ be the flow-box bounded by the tangent surgery annulus $A_0$, the annulus $$A_{\epsilon}=\bigcup_{t\in [-\tau,\tau]}\phi^t(\phi^{\epsilon}(K))$$ where $\epsilon>0$ and the transverse annuli $C_{in}$ and $C_{out}$ constructed by flowing the boundary components of $A_0$ using the flow of $R_{\alpha_-}$.

Set $C=C_{out}$ and let $\mathcal{F}$ be the foliation obtained by projecting on $C=C_{out}$ the $w$-curves (the flow-lines of $R_{\alpha_-}$) defined on $C_{in}$ using the flow of $X$. Consider a bi-contact $(1,q)$-surgery along the tangent annulus $A_0$ with shear $$F:A_0\rightarrow A_0,\;\;\;(v,s)\rightarrow (v,s+f(v))$$ with $f(-\delta)=0$ and $f(\delta)=-2\pi q$. Let $\tilde{X}$ be the new vector field. On $A_0$ the vector field $X$ induces a foliation directed by the vector $\frac{\partial}{\partial v}$ while $\tilde{X}$ induces a foliation directed by $ f'(v)\:\frac{\partial}{\partial s}+\frac{\partial}{\partial v}$. The new foliation on $A_0$ is obtained from the one induced by $X$ by adding a full $(1,q)$-Dehn twist to the curves of the original foliation. Consequently the curves of the foliation $\mathcal{\tilde{F}}$ of $C_{out}$ constructed by projecting the $w$-curves defined on $C_{in}$ using the flow of $\tilde{X}$ can be obtained from the curves of $\mathcal{F}$ by adding a full $(1,-q)$-Dehn twist (see \fullref{fig:Coordinates3}). An orbit equivalent flow is constructed by cutting $M$ along $C=C_{out}$ and gluing back $C^-$ to $C^+$ using some shear $G$ defined by 
$$G:C^-\rightarrow C^+,\;\;\;(w,s)\rightarrow (w,s+g_s(w))$$
with $g_s(0)=0$ and $g_s(\epsilon)=2\pi q$. 
In general $g_s:[0,\epsilon]\rightarrow S^1$ depends on the coordinate $s$.
\end{proof}

\begin{corollary}
\label{q>-k}
Suppose that $K$ is a Legendrian-transverse knot in a bi-contact structure $(\xi_-=\ker \alpha_-,\xi_+=\ker \alpha_+)$ defined by two contact forms such that $R_{\alpha_-}\in \xi_+$ everywhere. There is a transverse embedded annulus $C_q$ such that a $(1,q)$-Goodman surgery on $C_q$ with $q>-k$ yields an Anosov flow.
\end{corollary}

\begin{corollary}
In the hypothesis of \fullref{q>-k} there is a sequence of nested annuli  $C_{-1}\subset C_{-2}\subset \cdots\subset C_q\subset \cdots\subset C_{-\infty}$ bounded on one side by the the Legendrian-transverse knot $K$. Every $C_q$ is the is the core of $N_q$ and it is transverse to the flow if $q\in \mathbb{Z}$. The annulus $C_{-\infty}$ {\it quasi-transverse} and it is bounded by $K$ on one side and by the closed orbit $\gamma$ on the other side (see \fullref{bundles8}).

\end{corollary}
\begin{figure}
\label{bundles8}

\includegraphics[width=0.9\textwidth]{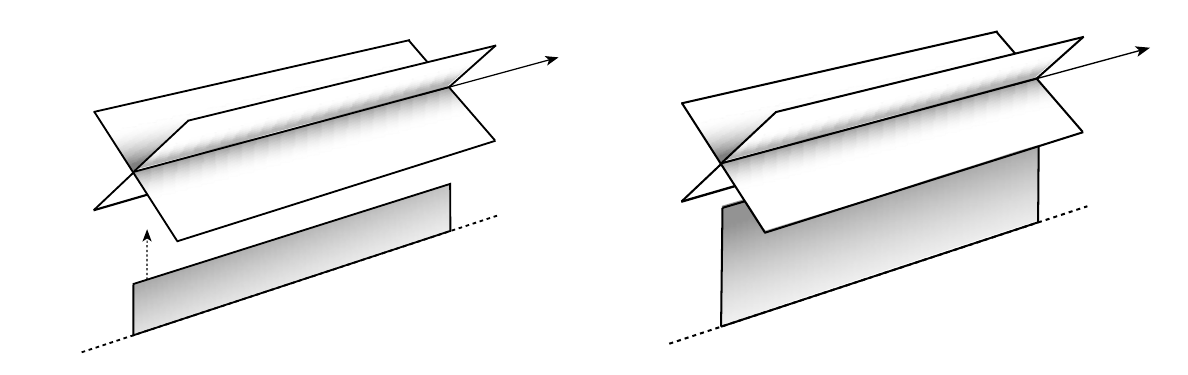}
  \caption{If a $(1,q)$-Goodman surgery along an annulus $C_q$ produces an Anosov flow for a fixed $q<0$, a $(1,q-1)$-Goodman produces an Anosov flow on an annulus $C_{q-1}$ obtaine expanding $C_q$ towards the closed orbit.  }
\label{bundles8}
\end{figure}

\section{Flexibility of bi-contact structures and structural stability}
The interplay between structural stability of an Anosov flow and the flexibility of its underlying bi-contact structure can be used to construct $C^1$ paths of Anosov flows. The idea is roughly as follows. Hozoori's criterion (\fullref{prop:Hoz}) gives a necessary and sufficient condition for a bi-contact structure to define an Anosov flow. In general this condition is not easy to control since it does require the knowledge of the invariant directions in the normal bundle $TM/\langle X \rangle$. Under particular circumstances this information is not necessary. For example we have the following sufficient condition of Anosovity that relies just on the Reeb dinamics of an underlying bi-contact structure and implies \fullref{prop:Hoz}. 

\begin{corollary}
\label{cond:Anosov}
Let $X$ be a projectively Anosov flow and let $(\ker \alpha_-=\xi_-,\ker \alpha_+=\xi_+)$ be a bi-contact structure defining $X$ such that the Reeb vector field $R_{\alpha_-}$ is contained in $\xi_+$. Then $X$ is an Anosov vector field. 
\end{corollary}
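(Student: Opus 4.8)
The plan is to reduce the statement to condition~(\ref{cond:h3}) of \fullref{prop:Hoz}, which asserts that a projectively Anosov flow is Anosov as soon as some defining pair $(\alpha_-,\alpha_+)$ has $R_{\alpha_-}$ dynamically positive. The hypothesis $R_{\alpha_-}\in\xi_+$ does not immediately supply this, because it places $R_{\alpha_-}$ exactly on the boundary of the relevant region rather than in its interior: passing to the quotient plane $TM/\langle X\rangle$, the Reeb field projects onto the line $\ell_+=\xi_+/\langle X\rangle$ (since $\langle X\rangle\subset\xi_+$), while the normalization $\alpha_-(R_{\alpha_-})=1>0$ forces this projection to lie on the open ray of $\ell_+$ in the half-plane $\{\alpha_->0\}$. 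That ray is the common edge of the two quadrants cut out by $\ell_-$ and $\ell_+$ sitting in $\{\alpha_->0\}$, one of which is dynamically positive and the other dynamically negative. So the task is to push $R_{\alpha_-}$ off this edge into the dynamically positive quadrant while staying in the setting of \fullref{prop:Hoz}.

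To do this I would perturb the negative form by the positive one, setting $\alpha_-^\epsilon=\alpha_-+\epsilon\alpha_+$. First I would check admissibility: since $\alpha_-\wedge d\alpha_-<0$ is a nonvanishing volume form that dominates the expansion of $\alpha_-^\epsilon\wedge d\alpha_-^\epsilon$ for small $|\epsilon|$, the form $\alpha_-^\epsilon$ is again a negative contact form; moreover $\alpha_-^\epsilon(X)=\alpha_-(X)+\epsilon\alpha_+(X)=0$ and $\ker\alpha_-^\epsilon$ is $C^0$-close to $\xi_-$, hence still transverse to $\xi_+$, so $(\ker\alpha_-^\epsilon,\xi_+)$ is a bi-contact structure defining the same line field $\langle X\rangle$, and therefore the same flow up to reparametrization. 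The point of the perturbation is that the two signs of $\epsilon$ move $R_{\alpha_-^\epsilon}$ to the two opposite sides of the (unchanged) line $\ell_+$, i.e.\ into the two adjacent quadrants, exactly one of which is dynamically positive; choosing $\epsilon$ of the corresponding sign and applying condition~(\ref{cond:h3}) of \fullref{prop:Hoz} to the pair $(\alpha_-^\epsilon,\alpha_+)$ then yields that $X$ is Anosov.

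The step requiring the most care, and the main obstacle, is verifying that a single sign of $\epsilon$ works uniformly over all of $M$. Concretely, writing $R_{\alpha_-^\epsilon}=R_{\alpha_-}+\epsilon V+O(\epsilon^2)$ and linearizing the Reeb equations $\alpha_-^\epsilon(R_{\alpha_-^\epsilon})=1$ and $\iota_{R_{\alpha_-^\epsilon}}d\alpha_-^\epsilon=0$ gives $\alpha_-(V)=0$ and $\iota_V d\alpha_-=-\iota_{R_{\alpha_-}}d\alpha_+$, so that the first-order displacement across $\ell_+$ is governed by $\alpha_+(V)$. A frame computation in $(X,R_{\alpha_-},e_2)$ with $e_2\in\xi_-$ expresses $\alpha_+(V)$ as a ratio of the two contact volume densities; since $\alpha_-\wedge d\alpha_-<0$ and $\alpha_+\wedge d\alpha_+>0$ never vanish and the weak foliations are orientable, this sign is constant on $M$. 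Hence there is a global choice of sign for $\epsilon$ placing $R_{\alpha_-^\epsilon}$ in the interior of the dynamically positive region at every point, which is precisely what condition~(\ref{cond:h3}) needs. I would expect the only genuinely delicate bookkeeping to be matching the orientation conventions of \fullref{bundles} to confirm that the reachable quadrant is the dynamically positive one; the sign computation removes any ambiguity, since both signs of $\epsilon$ are admissible and land in opposite quadrants.
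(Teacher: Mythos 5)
Your premise in the first paragraph is a misreading of the definition, and it is what sends the whole argument down an unnecessary (and ultimately incomplete) path. In the paper's definition of dynamical positivity, following Hozoori, the four regions of \fullref{bundles} are cut out by the tangent planes $E^s$ and $E^u$ of the weak invariant foliations (this is why the figure depicts $\mathcal{F}^s$ and $\mathcal{F}^u$), \emph{not} by $\xi_-$ and $\xi_+$; the bi-contact planes sit strictly inside the quadrants. Indeed, for any supporting bi-contact structure the four lines $\xi_+/\langle X\rangle$, $E^u/\langle X\rangle$, $\xi_-/\langle X\rangle$, $E^s/\langle X\rangle$ alternate strictly in $T_pM/\langle X\rangle$ at every point: by the Mitsumatsu--Eliashberg--Thurston picture, $E^u$ (resp.\ $E^s$) arises as the strictly monotone limit of the pushed-forward (resp.\ pulled-back) contact planes, hence lies strictly inside the cone bounded by $\xi_-$ and $\xi_+$, and dually $\xi_\pm$ are everywhere transverse to $E^s$ and $E^u$. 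Consequently $\xi_+/\langle X\rangle$ lies in the \emph{interior} of the first and third regions; since $\alpha_-(R_{\alpha_-})=1$ while $\alpha_-(X)=0$, the Reeb field is nowhere collinear with $X$, so the hypothesis $R_{\alpha_-}\in\xi_+$ already makes $R_{\alpha_-}$ dynamically positive on the nose, and condition~(\ref{cond:h3}) of \fullref{prop:Hoz} applies directly. This is the one-line proof the paper intends when it calls the corollary a straightforward consequence; there is no boundary case to push off of.

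That said, your perturbation analysis is locally sound: $\alpha_-^\epsilon=\alpha_-+\epsilon\alpha_+$ is an admissible negative contact form for small $\epsilon$ on the closed manifold $M$, it annihilates $X$, your linearization ($\alpha_-(V)=0$ because $\alpha_+(R_{\alpha_-})=0$, and $\iota_Vd\alpha_-=-\iota_{R_{\alpha_-}}d\alpha_+$) is correct, and so is the sign constancy: in a frame $(X,e,R_{\alpha_-})$ with $e\in\xi_-$, $\alpha_+(e)=1$, one finds $\alpha_+(V)=(\alpha_+\wedge d\alpha_+)(R_{\alpha_-},e,X)\,/\,(\alpha_-\wedge d\alpha_-)(R_{\alpha_-},e,X)<0$ everywhere, since the two contact volumes have opposite signs on any frame. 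The genuine gap is at the only step that matters: you never establish that the quadrant you land in is the dynamically positive one. Your closing remark --- that both signs of $\epsilon$ are admissible and land in opposite quadrants --- cannot determine the correct sign; and relative to your own decomposition by $\ell_\pm$ there is no way to identify the positive quadrant without invoking $E^s$ and $E^u$, i.e.\ precisely the transversality fact above. Once that fact is in hand, the perturbation is superfluous: by compactness, for small $\epsilon$ of \emph{either} sign (and already for $\epsilon=0$) the Reeb field remains in the interior of the $E^s$/$E^u$-quadrants, which is why the paper needs no deformation at all.
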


We will systematically use the following consequence of \fullref{cond:Anosov}.

 \begin{lemma}
 \label{LS}
Suppose that $M$ is equipped with a pair of contact structures $(\ker \alpha_-=\xi_-,\xi_+)$ defining an Anosov flow $\phi^t$ such that the Reeb vector field $R_{\alpha_-}$ of $\alpha_-$ belongs to $\xi_+$. An isotopy of $\xi_+$ along the flowlines of $R_{\alpha_-}$ defines a path of orbit equivalent Anosov flows.
 \end{lemma}

\begin{proof}
Assume that $R_{\alpha_-}\in \xi_+$. An isotopy of $\xi_+=\ker \alpha_+$ as above defines a path of bi-contact structures $(\xi_-,\ker (\alpha_+)_t)_{t\in [0,1]}$ such that $R_{\alpha_-}\in (\xi_+)_t=\ker (\alpha_+)_t$ in $N$. The statement follows from \fullref{cond:Anosov} and $C^1$-structural stability of Anosov flows.   
\end{proof}

If a flow is Anosov and volume preserving we can ensure the existence of a bi-contact structure satisfying the hypothesis of \fullref{lem:coordinate system}. Using the equivalence of \fullref{equivalence} we can construct transverse annuli along which a $(1,q)$-Goodman surgery (for some $q\in \mathbb{Z}$) produces an Anosov flow. Under these hypothesis we have the following consequence of \fullref{LS}.

\begin{corollary} 
\label{cor:4}
Suppose that $C_{q}$ is an annulus with positive preferred direction transverse to a volume preserving Anosov flow $\phi^t$. Suppose also that a $(1,q)$-Goodman surgery along $C_q$ produces an Anosov flow for a fixed $q<0$.
\begin{enumerate}
\item There is an arbitrarily thin annulus $C\subset C_{q}$ and a flow $\phi_t'$ orbit equivalent to $\phi_t$ such that a $(1,q)$-Goodman surgery along $C$ generates an Anosov flow. 
\item There is a $C^1$-path of Anosov flows connecting $\phi_t$ and $\phi_t'$.
\item The flow-lines of $\phi'_t$ coincide with the ones of $\phi_t$ outside an arbitrarily small flow-box neighborhood $N$ of $C_{q}$.
\end{enumerate}
\end{corollary}

\section{Surgery along simple closed geodesics in a geodesic flow}
\label{sec:Contact}

Let $X$ be the generating vector field of the geodesic flow on the unit tangent bundle of a hyperbolic surface $S$ and let $\beta_+$ the contact form preserved by $X=R_{\beta_+}$. Select a closed geodesic $\gamma$ (eventually self intersecting) on $S$ and consider the knot $L$ defined by the angle $\theta=\frac{\pi}{2}$ on each fiber along $\gamma$. $L$ is a Legendrian knot for $\ker \gamma$. Let $C_{\epsilon}=S^1\times [-\epsilon, \epsilon]\subset$ UT$\gamma$ be a transverse annulus with $\epsilon$ be small enough to ensure that $C_{\epsilon}$ is not self intersecting. As shown by Foulon and Hasselblatt for any $q>0$ their construction ensure that a $(1,q)$-Goodman surgery produces a contact Anosov flow.

If $L$ is associated to a simple closed geodesic $\gamma$ a recent result of Marty \cite{Ma} shows that Goodman surgery produces a skewed $\mathbb{R}$-covered Anosov flow regardless of the sign of the surgery if and only if the closed orbit is the lift of a simple closed geodesic. We now prove a counterpart of Marty's result in the contact category.

To this hand we prove the following lemma showing that while Foulon and Hasselblatt construction requires an arbitarily thin annulus in order to produce a contact flow for positive surgery coefficients, in the case of negative surgery coefficients this requirement is not necessary.

\begin{lemma}
\label{FHn}
Suppose that $R_{\beta_+}$ is the Reeb vector of a  contact form $\beta_+$ defining a positive contact structure and let $L$ be a Legendrian knot for $\ker \beta_+$. For $q<0$ Foulon and Hasselblatt construction along a transverse embedded annulus $C$ produces a new contact form $\tilde{\beta}_+$ with Reeb vector field $R_{\tilde{\beta}_+}$ regardless of the thickness of the surgery annulus.
\end{lemma}

\begin{proof} Foulon and Hasselblatt show that the vector field $\tilde{X}$ obtained by Goodman surgery with shear $$G:C\rightarrow C,\;\;\;(s,w)\rightarrow (w,s+g(w)),$$ along $C$ preserves a positive contact form  
$$\tilde{\beta}_+=dt+w\:ds- dh$$
with 
$$h(t,w)= \lambda(t) \int_{-\epsilon}^{w}xg'(x)\;dx,$$ hence
$$\tilde{\beta}_+=dt+w\:ds-\lambda'(t)\int_{-\epsilon}^{w}xg'(x)\;dx\;dt-\lambda(t)wg'(w)\;dw.$$
They indeed show that the contact condition 
\begin{equation}
\label{ccond}
\tilde{\beta}_+\wedge d\tilde{\beta}_+=(1-\frac{\partial h}{\partial t})\:dV>0
\end{equation} is satisfied regardless of the sign of the Dehn twist if the surgery annulus is sufficiently thin. 
Note that if \begin{equation}
\label{cc2}
\frac{\partial h}{\partial t}=\lambda'(t)\int_{-\epsilon}^{w}xg'(x)\;dx\;dt\leq 0
\end{equation} the contact condition (\ref{ccond}) is satisfied independently of the thickness of the surgery annulus. If $g'(w)<0$, we have $\int_{-\epsilon}^{w}xg'(x)\;dx\geq0$ for $w\in(-\epsilon, \epsilon)$ and since since $\lambda'(t)\leq 0$ for $t>0$
inequality (\ref{cc2}) is satisfied. This corresponds to  $(1,q)$-Foulon-Hasselblatt surgery with $q<0$.

\end{proof}

\begin{lemma}
\label{Lambda}
In a neighborhood $\Lambda$ of a Birkhoff torus associated to a simple closed geodesic there is a coordinate system $(w,s,v)$ such that the  natural contact forms $( \alpha_-,\alpha_+,\beta_+)$ have the following expressions:
$$\alpha_-=dw+v\:ds$$
$$\alpha_+=e^{\frac{1}{2}v^2}(\cos w\:ds-\sin w\:dv)$$
$$\beta_+=e^{\frac{1}{2}v^2}(-\sin w\:ds-\cos w\:dv).$$
\end{lemma}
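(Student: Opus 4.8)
The plan is to read the three forms off the canonical framing $\{X,V,H\}$ directly, letting the fiber circle supply the $\cos w$/$\sin w$ dependence through the adjoint action of $V$, and letting the transverse hyperbolic geometry supply the remaining $(s,v)$--dependence. Since $\gamma$ is a \emph{simple} closed geodesic, the Birkhoff torus $\mathrm{UT}\gamma$ is embedded (\autoref{sec:Geodesic}), so a genuine tubular neighborhood $\Lambda$ exists and the fiber direction closes up without self-intersection; this embeddedness is exactly what allows the coordinates to be defined globally around the circle $w\in S^1$, and it is where simplicity is used. Following \autoref{lem:coordinate system}, I would first use the flow of $R_{\alpha_-}=V$ to introduce a coordinate $w$ with $V=\partial_w$, and choose transverse coordinates $(s,v)$ on the base (with $s$ the arclength parameter along $\gamma$ and $v$ a signed transverse parameter) so that $\alpha_-=dw+v\,ds$ holds exactly, with the Legendrian knot $L$ sitting at $\{v=0,\,w=0\}$. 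By \autoref{LL} we may moreover assume the remaining forms are $s$--independent on the central slice.

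Next I would propagate the forms around the fiber. The structure equations \eqref{str eq} give $[V,X]=H$ and $[V,H]=-X$, so $\mathrm{ad}_V$ acts on $\mathrm{span}(X,H)$ as an infinitesimal rotation. Dualizing (using $\beta_+(V)=\alpha_+(V)=0$ from \eqref{Bir}, Cartan's formula, and the relation $\alpha_+=d\beta_+(V,\cdot)$), the pair $(\alpha_+,\beta_+)$ satisfies a linear rotation system in $w$, namely $\mathcal{L}_V\beta_+=\alpha_+$ together with the companion equation for $\mathcal{L}_V\alpha_+$, which with the orientation conventions fixed by \eqref{str eq} reads
\[
\partial_w\alpha_+=-\beta_+,\qquad \partial_w\beta_+=\alpha_+ .
\]
Integrating this from the slice $\{w=0\}$ with initial data of the form $\alpha_+|_{w=0}=E(v)\,ds$ and $\beta_+|_{w=0}=\mp E(v)\,dv$ produces precisely a rotated pair
\[
\alpha_+=E(v)\,(\cos w\,ds-\sin w\,dv),\qquad
\beta_+=E(v)\,(-\sin w\,ds-\cos w\,dv),
\]
so the entire $w$--dependence of the claimed normal form is forced cleanly by the rotational nature of $\mathrm{ad}_V$.

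It then remains to identify the transverse profile $E(v)$ and the initial slice data. On $\{w=0\}$ the vectors $X,H$ are the geodesic and transverse-Jacobi directions, whose norms expand off $\gamma$ according to the hyperbolic Fermi structure; normalizing so that $E(0)=1$ (which recovers $\alpha_+=ds$ on $L$, as in \autoref{lem:coordinate system}) pins the model factor to $E(v)=e^{\frac12 v^2}$. Finally I would check directly that the dual frame of the three displayed forms is $\{X,H,V\}$ with $R_{\beta_+}=X$, $R_{\alpha_+}=H$, $R_{\alpha_-}=V$, and that the relations $\alpha_+=d\beta_+(V,\cdot)$, $\alpha_-=-d\beta_+(H,\cdot)$ of \eqref{Bir} hold.

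I expect the transverse step to be the main obstacle. The rotation argument controls the $w$--variable exactly, but extracting the conformal factor $e^{\frac12 v^2}$ requires a careful computation in Fermi coordinates along $\gamma$, and one must verify that the coordinate choice making $\alpha_-=dw+v\,ds$ exact is simultaneously compatible with the propagated expressions around the \emph{entire} fiber $w\in S^1$, not merely to first order near $v=0$. In other words, the delicate point is the interaction between the transverse hyperbolic profile and the global closing-up of the rotation over the fiber circle; by contrast, the $\cos w$/$\sin w$ structure is essentially automatic once $\mathrm{ad}_V$ is recognized as a rotation.
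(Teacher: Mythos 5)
Your proposal inverts the logic of the paper's proof, and the step you yourself flag as the main obstacle is a genuine gap. The paper's proof of \fullref{Lambda} is a pure verification of an ansatz: it writes down the three forms, computes their Reeb vector fields
$$V=\frac{\partial}{\partial w},\qquad H=e^{-\frac{1}{2}v^2}\Bigl(\cos w\,\frac{\partial}{\partial s}-\sin w\,\frac{\partial}{\partial v}-v\cos w\,\frac{\partial}{\partial w}\Bigr),\qquad X=e^{-\frac{1}{2}v^2}\Bigl(-\sin w\,\frac{\partial}{\partial s}-\cos w\,\frac{\partial}{\partial v}+v\sin w\,\frac{\partial}{\partial w}\Bigr),$$
and checks that $V$ is periodic and that the pairings of (\ref{Bir}) hold, $\alpha_+(V)=\alpha_-(H)=\beta_+(V)=\beta_+(H)=0$. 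No Fermi-coordinate computation and no derivation from the $\PSL(2,\mathbb{R})$ structure appears anywhere. Your rotation step is fine as far as it goes: from $\alpha_+(V)=\beta_+(V)=0$, Cartan's formula, and the bracket relations (\ref{str eq}) one does get the linear system $\mathcal{L}_V\alpha_+=-\beta_+$, $\mathcal{L}_V\beta_+=\alpha_+$ (note your sign convention is opposite to the displayed model, which satisfies $\partial_w\alpha_+=+\beta_+$, a harmless $w\mapsto -w$ issue), and this forces the $\cos w$/$\sin w$ dependence. But, as you concede, this only controls the $w$-variable; the entire transverse content of the lemma is deferred to a computation you do not perform, and you never carry out the direct Reeb-field check that constitutes the paper's actual proof.

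Moreover, the deferred step would fail as you set it up. You posit initial data with a \emph{single common} profile, $\alpha_+|_{w=0}=E(v)\,ds$ and $\beta_+|_{w=0}=\mp E(v)\,dv$, and claim the normalization $E(0)=1$ ``pins'' $E(v)=e^{\frac{1}{2}v^2}$; a normalization at $v=0$ determines nothing beyond zeroth order, and the common-profile assumption is unjustified. Carrying out the Fermi computation you appeal to gives \emph{asymmetric} profiles: the metric near $\gamma$ is $d\rho^2+\cosh^2\rho\,dx^2$, the connection form is $d\theta+\sinh\rho\,dx$, so the substitution $v=\sinh\rho$ that makes $\alpha_-=dw+v\,ds$ exact yields $\beta_+|_{w=0}=\sqrt{1+v^2}\,ds$ and $\alpha_+|_{w=0}=(1+v^2)^{-1/2}\,dv$, and the only coordinate changes preserving both $\alpha_-=dw+v\,ds$ and $s$-independence are rescalings $(s,v)\mapsto(s/c,cv)$, which cannot equalize two profiles whose ratio $1+v^2$ is nonconstant. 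Consistently with this, a direct computation in the displayed model gives $[H,X]=-e^{-v^2}V$, so the exact structure equations (\ref{str eq}) hold only along $\{v=0\}$; a derivation from the exact homogeneous structure — which is what your strategy attempts — therefore cannot output the symmetric factor $e^{\frac{1}{2}v^2}$ on both legs. The lemma should be read, and is proved in the paper, as the elementary verification that the displayed forms are contact, are $s$-independent, rotate along the $w$-curves, and satisfy the mutual Reeb relations of (\ref{Bir}) in a neighborhood of the (embedded, by simplicity of $\gamma$) Birkhoff torus — these are the only properties used afterwards. Your write-up is missing exactly that check, and its derivational replacement does not converge to the stated formulas.
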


\begin{proof}
An elementary calculation gives the following expressions for the Reeb vector fields in $N$:
$$V=R_{\alpha_-}=\frac{\partial}{\partial w}$$
$$H=R_{\alpha_+}=e^{-\frac{1}{2}v^2}(\cos w\:\frac{\partial}{\partial s}-\sin w\:\frac{\partial}{\partial v}-v\cos w \:\frac{\partial}{\partial w}) $$ 

$$X=R_{\beta_+}=e^{-\frac{1}{2}v^2}(-\sin w\:\frac{\partial}{\partial s}-\cos w\:\frac{\partial}{\partial v}+v\sin w \:\frac{\partial}{\partial w}). $$ 
The vector field $V$ is periodic in $\Lambda$ and $$\alpha_+(V)=\alpha_-(H)=\beta_+(V)=\beta_+(H)=0.$$

\end{proof}

\begin{figure}
\label{bundle7}
\labellist 
     \begin{footnotesize}
     \pinlabel $L$ at 55 174
     \pinlabel $C_{out}$ at 550 100
    \pinlabel $A_0$ at 50 144
      \pinlabel $w$ at 520 150
      \pinlabel $A_{\epsilon}$ at 520 194
      \pinlabel $A_{-\epsilon}$ at 525 40
     \end{footnotesize}
\endlabellist
\includegraphics[width=0.9\textwidth]{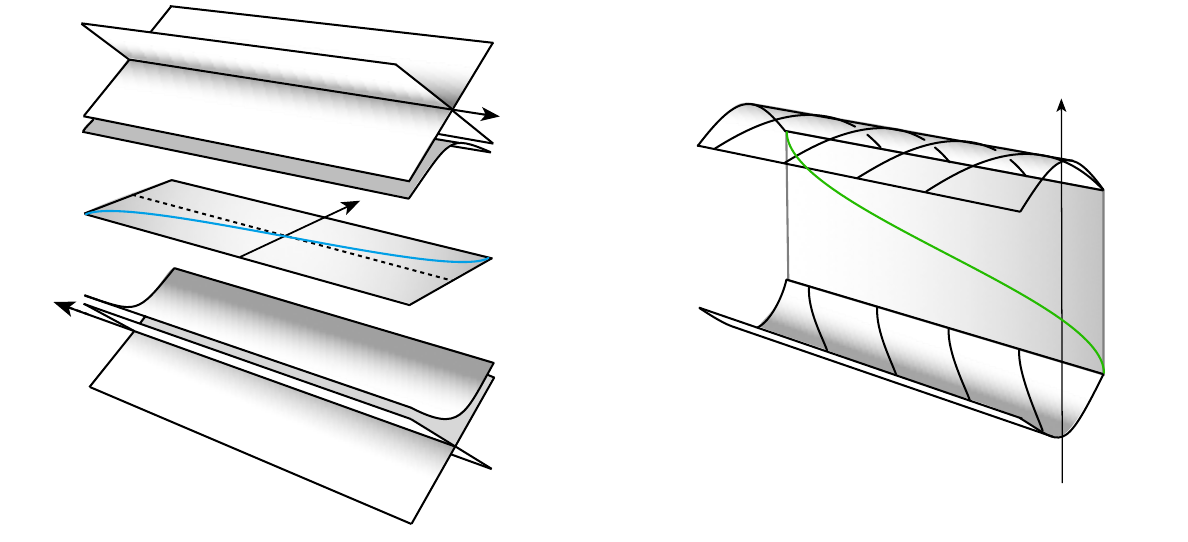}
  \caption{On the left, a neighborhood of a Legendrian knot $L$ associated to a simple closed geodesic on a geodesic flow. The blue curve is the Dehn twist corrsponding to a (negative) Legendrian-transverse surgery (note that the curve has positive slope). On the left: the characteristic foliations induced by $\xi_+$ on $A_{\epsilon}$ and $A_{-\epsilon}$. We perform a Legendrian transverse surgery on $A_0$ "splitting the difference" as in \cite{FoHa1}. This method consists in introducing half of the deformation of the bi-contact structure on the side of the annulus where $w<0$ and half on the side of the annulus where $w>0$. The green curve on $C_{out}$ is the Dehn twist corresonding to the induced Goodman surgery.   }
\label{bundle7}
\end{figure}

\begin{theorem}
Let $C\subset$ UT$\gamma$ be a quasi-transverse annulus associated to a simple closed geodesic. For every $q<0$ there is an embedded transverse annulus $C_q\subset C$ centered in $L$ such that a $(1,q)$-Goodman surgery produces a contact Anosov flow.

\end{theorem}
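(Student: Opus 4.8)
The plan is to realize, for each fixed $q<0$, a single $(1,q)$-Goodman surgery along an embedded transverse annulus $C_q\subset C$ centered on $L$ that can be read in two complementary ways: as a Foulon--Hasselblatt surgery on the contact form $\beta_+$, which forces the resulting flow to be contact, and as a Legendrian-transverse surgery on the supporting bi-contact pair $(\alpha_-,\alpha_+)$ whose slope exceeds $-q$, which forces the resulting flow to be Anosov. The point is that the simplicity of $\gamma$ is exactly what makes the two requirements compatible.

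First I would fix the coordinates of \fullref{Lambda} on a neighborhood $\Lambda$ of the Birkhoff torus $\mathrm{UT}\gamma$; this torus is embedded precisely because $\gamma$ is simple, so the quasi-transverse annulus $C=C_{-\infty}$ bounded by $L$ on one side and the closed orbit on the other is embedded as well. In these coordinates $\alpha_-=dw+v\,ds$, the knot $L$ sits at $\{v=0,\,w=0\}$, and $R_{\alpha_-}=V=\partial/\partial w\subset\xi_+$ (since $\alpha_+(V)=0$ in \fullref{Lambda}), so the hypotheses of \fullref{SLOPE} hold along $L$. Because $C$ terminates on the closed orbit, the nested family $C_{-1}\subset C_{-2}\subset\cdots\subset C_{-\infty}$ of transverse annuli lives inside $C$, and the slope $k$ of the characteristic foliation induced by $\xi_+$ on the boundary of the associated flow-box tends to $+\infty$ as the box is pushed toward $\gamma$. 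Hence, given $q$, I can choose $C_q\subset C$ deep enough that $k>-q$. By \fullref{SLOPE} together with \fullref{equivalence}, the Legendrian-transverse $(1,q)$-surgery along the tangent annulus bounding this flow-box is Anosov and is orbit equivalent to a $(1,q)$-Goodman surgery along $C_q$.

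Next I would verify the contact condition. At the level of the vector field the surgery along $C_q$ is a $(1,q)$-Goodman surgery on $X=R_{\beta_+}$, which is exactly a Foulon--Hasselblatt surgery applied to $\beta_+$ along the transverse annulus $C_q$. For $q<0$, \fullref{FHn} produces a genuine positive contact form $\tilde\beta_+$ with $\tilde X=R_{\tilde\beta_+}$ and, crucially, shows that the contact condition $\tilde\beta_+\wedge d\tilde\beta_+>0$ holds for any thickness of $C_q$; this is exactly where the sign $q<0$ enters, forcing $\partial h/\partial t\le 0$ in (\ref{cc2}). To keep this deformation compatible with the bi-contact deformations of \ref{glu} on both sides of the tangent annulus, I would perform the construction by ``splitting the difference'' as in \cite{FoHa1}, inserting half of the deformation on $\{w<0\}$ and half on $\{w>0\}$; this symmetric choice alters neither the contact computation of \fullref{FHn} nor the slope computation of \fullref{SLOPE}.

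Finally I would check that these two certificates apply to one and the same flow. Here I would use that the Foulon--Hasselblatt deformation of $\beta_+$ and the Legendrian-transverse deformation of $(\alpha_-,\alpha_+)$ are deformations of the three mutually transverse contact forms of \fullref{Lambda}, supported on the common flow-box and twisting by the same $(1,q)$-shear along $C_q$; carrying them out together should produce a single vector field $\tilde X$ that is simultaneously the Reeb field of the positive contact form $\tilde\beta_+$ (by \fullref{FHn}, for any thickness, since $q<0$) and is carried by a bi-contact pair $(\tilde\alpha_-,\tilde\alpha_+)$ with $R_{\tilde\alpha_-}\subset\tilde\xi_+$ (by the slope argument of \fullref{SLOPE}, since $k>-q$). \fullref{cond:Anosov} then makes $\tilde X$ Anosov, so $\tilde X$ is contact Anosov. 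I expect the main obstacle to be precisely this simultaneity: arranging that the Foulon--Hasselblatt deformation certifying the contact condition and the bi-contact deformation certifying Anosovity can be performed along one and the same annulus $C_q$, so that a single flow inherits both properties. Simplicity of $\gamma$ is essential, since only then does the quasi-transverse annulus $C$ exist and provide the room to push $k$ above $-q$ while keeping $C_q$ embedded.
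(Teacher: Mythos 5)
Your proposal follows essentially the same route as the paper's proof: the coordinates of \fullref{Lambda} (available because simplicity makes UT$\gamma$ and the quasi-transverse annulus embedded), the slope argument of \fullref{SLOPE} and \fullref{Rem} with $k\to\infty$ as the flow-box is pushed toward the closed orbit, the orbit equivalence of \fullref{equivalence}, and \fullref{FHn} with $g'(w)<0$, all exploiting $s$-independence and the split-the-difference device. The ``simultaneity'' you flag as the main obstacle is dissolved in the paper not by co-deforming $(\alpha_-,\alpha_+)$ and $\beta_+$ into one vector field carrying both structures, but by identifying everything at the level of the single Goodman-surgered flow along $C_{out}$: that flow is orbit equivalent to the Anosov Legendrian-transverse flow by \fullref{equivalence} and, by \fullref{FHn}, is a reparametrization of the Reeb flow of $\tilde{\beta}_+$, hence Anosov and contact without any further compatibility argument.
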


\begin{proof}
We first show that, for every $q\in \mathbb{Z}$, there are choices such that the bi-contact surgery along a tangent annulus $A_0$ containing $L$ produces an Anosov flow. Then we show that these flows are orbit equivalent to flows constructed by $(1,q)$-Foulon-Hasselblatt surgery. 

Consider the $(s,v,w)$-coordinate system in a neighborhood $\Lambda$ of UT$\gamma$ as in \fullref{Lambda}. Let $N$ be a flow-box neighborhood of $L$  bounded by the tangent surgery annulus $A=A_0$, the annulus $$A_{\epsilon}=\bigcup_{t\in [-\tau,\tau]}\phi^t(\phi^{\epsilon}(K))$$ where $\epsilon>0$ and the transverse annuli $C_{in}$ and $C_{out}$ constructed by flowing the boundary components of $A_0$ using the flow of $R_{\alpha_-}$.  
Since by \fullref{Lambda} $\xi_+$ does not depend on the $s$-coordinate, the characteristic foliation induced by $\xi_+$ on $\partial N$ does not depend on the $s$-coordinate. By \fullref{Rem} for every $q\in \mathbb{Z}$ there is a neighborhood $N_q$ and a new pair of contact structures $(\tilde{\xi}_-,\tilde{\xi}_+)_q$ on the new manifold $\tilde{M}_q$ defining an Anosov flow. Note that $N_q$ and the pair $(\tilde{\alpha}_-,\tilde{\alpha}_+)_q$ can be chosen to be independent on the $s$-coordinate.  
By \fullref{equivalence} these flows are orbit equivalent to flows obtained by $(1,q)$-Goodman surgery with shear $$G:C_{out}\rightarrow C_{out},\;\;\;(s,w)\rightarrow (w,s+g(w))$$ where we chose $g'(w)<0$. Since $L$ is Legendrian for $\ker \beta_+$ and $\beta_+$ is also not dependent on $s$ and rotates along the $w$-coordinate, we can apply Foulon-Hasselblatt construction with surgery annulus $C_{out}$ and coordinates $(s,w,t)$ where $(s,w)$ are defined on $C_{out}$ and $t$ is the parameter given by the geodesic flow (the Reeb flow of $\beta_+$). By \fullref{FHn} the resulting flow is contact regardless of the thickness of $C_{out}$.
Therefore for every $q<0$ the resulting flow is Anosov and contact (see \fullref{bundle7}). 

\end{proof}

\section{Skewed $\mathbb{R}$-covered Anosov flows generated by surgery are contact }

In this chapter we prove a version of Conjecure \fullref{conj1} for surgeries on closed orbit of a geodesic flow on the unit tangent bundle of an hyperbolic surface $S$. This result is a consequence of \fullref{FHn} and work of Asaoka, Bonatti and Marty \cite{ABM} that we recall in the following. 
\subsection{Partial sections, multiplicity and linking numbers.}
Let $M$ be an oriented 3-manifold equipped with a smooth $\phi^t$ flow. We call a subset $P$ of $M$ a {\it partial section} if it is the image of a smooth immersion $\iota:\hat{P}\rightarrow M$ where $\hat{P}$ is a compact surface and the restriction of $\iota$ to the interior of $\hat{P}$ is an embedding transverse to the flow and $\iota(\partial \hat{P})$ is a finite union of closed orbits of $\phi^t$. The immersion $\iota$ lifts to the manifold $M_{\partial P}$ obtained by blowing up along $\iota(\partial \hat{P})=\partial P$. A closed orbit in $\partial P$ is the image $\iota(\hat{\gamma})$ of a boundary component $\hat{\gamma}$ of $\partial \hat{P}$. We denote the lift of $\iota(\hat{\gamma})$ to the blow-up with $\gamma^*$ and we call it {\it boundary component immersed in $\gamma$}.  When the invariant foliations of the Anosov flow are orientable, an immersed boundary component $\gamma^*$ has two invariants: the multiplicity $mult(\gamma^*)$ and the linking number $link(\gamma^*)$ defined as follows\
$$\gamma^*=mult(\gamma^*)\:\lambda_{\gamma}+link(\gamma^*)\:\mu_\gamma$$

Here $\lambda_{\gamma}$ is a curve homotopic to a lift of $\gamma$ in the blow-up manifold (a parallel) while $\mu_{\gamma}$ is a curve homotopic to a fiber of the projection $\pi_{\partial P}:M_{\partial P}\rightarrow M$ (a meridian). If a closed orbit $\gamma$ is the image of just one  boundary components of $\hat{P}$ we can write $mult(\gamma^*)=mult(\gamma)$ and $link(\gamma^*)=link(\gamma)$. We say that a boundary component $\gamma^*$ is {\it positive (negative)} if $mult(\gamma^*)>0$ ($mult(\gamma^*)<0$). A partial section $P$ is said to be positive (negative) when {\it all} its boundary components are positive (negative).

\begin{theorem}[Asaoka--Bonatti--Marty \cite{ABM}]
\label{t1}
If $\phi^t$ is positively skewed $\mathbb{R}$-covered it does not admit a negative partial section.

\end{theorem} 

In \cite{ABM} the authors also study how the multiplicity of a boundary component of a partial section varies after a $(1,q)$-Goodman surgery.

\begin{proposition}[Asaoka--Bonatti--Marty \cite{ABM}]
\label{t2}
A $(1,q)$-Goodman surgery along a closed orbit in $\partial P$ increases the multiplicity of a boundary component $\gamma^*$ by $+
q\: link(\gamma^*)$.
\end{proposition}

\begin{example}
\label{ex2}
Given a non-simple closed geodesic we have an associated Birkhoff annulus $C$ with self intersection. There is a process called {\it Fried desingularization} (see \cite{Frid}) that allows us to obtain from $C$ an immersed partial section $P$ as defined above. The boundary components $\gamma_1$ and $\gamma_2$ of a partial section $C$ associated to a (simple or non-simple) closed geodesic have both multiplicity $1$ and $link(\gamma^*)=p$ where $p$ is the number of self intersections of the geodesic. By \fullref{t2} a $(1,q)$-Goodman surgery with $q<0$ along $\gamma_1$ changes the positivity of only one boundary component. The resulting partial section has two boundary components with opposite sign.

\end{example}
\label{ex1}
\begin{example} In \cite{Ma} (Section 1.5.20) Marty describe a method to produce partial sections associated to a closed orbit $\gamma$ that is the lift of a non-simple closed geodesics on an orientable surface of negative curvature. These partial sections have multiple negative boundary components and the only positive boundary component is the one associated to $\gamma$ with $mult(\gamma)=1$ and $link(\gamma)>0$. 
\end{example}

We are now ready to prove the main result of this chapter. Let $\phi^t$ be a geodesic flow with the orientation that makes it a positive skewed $\mathbb{R}$-covered Anosov flow.

\begin{theorem}

\label{rcovered}
Any (positive) skewed $\mathbb{R}$-covered Anosov flow obtained by surgery on a geodesic flow is orbit equivalent to a (positive) contact Anosov flow.
\end{theorem}

\begin{proof}
As a consequence of \fullref{FHn} it is enough to show that a $(1,q)$-Goodman surgery along a non-simple closed geodesic does not produce a (positive) skewed $\mathbb{R}$-covered Anosov flow if $q<0$. Given a closed orbit $\gamma$ associated to a non-simple geodesic we chose a partial section as described in \fullref{ex1}. Since $mult(\gamma)=1$ and $link(\gamma)>0$, by \fullref{t2} a $(1,q)$-Goodman surgery along $\gamma$ with $q<0$ yields a negative partial section. By \fullref{t1} the new flow cannot be (positive) skewed $\mathbb{R}$-covered.

\end{proof}

%Let $\phi^t$ geodesic flow with the orientation that makes it a positive skewed $\mathbb{R}$-covered Anosov flow. For a non-simple closed geodesic Marty \cite{Ma} shows that there is a $\overline{p}<0$ such that every $(1,p)$-Goodman surgery with $p<\overline{p}$ generates an Anosov flow that is not positively skewed $\mathbb{R}$-covered. 

\begin{remark}
One important ingredient in proving \fullref{rcovered} is the fact that a $(1,q)$-Goodman surgery along a closed non simple geodesic $\gamma$ in a (positive) skewed $\mathbb{R}$-cover Anosov flow does not produce a (positive) skewed $\mathbb{R}$-covered Anosov flows if $q<0$. We now state a counterpart of this statement in the contact category. 
\begin{proposition}
It is not possible to construct a contact Anosov flow performing Foulon-Hasselblatt construction if the closed geodesic is non simple and $q<0$.
\end{proposition}
\begin{proof}[Sketch of the proof]

The idea is the following. To a neighborhood $N$ of $L$ \fullref{thm:3} associates a positive number $k$ with the property that a $(1,q)$-Goodman surgery produces a contact Anosov flow if $k+q>0$. A different neighborhood $N'$ is associated to a different $k'$. For instance a sequence of nested neighborhoods $  N'\subset N'' \subset N''' $ is associate to a sequence of increasing  slopes $k'''>k''>k'$. For a fixed $K$ we denote with $\overline{k}$ the largest of all the possible values of $k$. If the geodesic is closed and simple there is a infinite sequence of nested neighborhoods as in remark 6.3 and 
$\overline{k}=\infty.$
If the geodesic is non simple we claim
that $\overline{k}\leq 1$. A neighborhood $N$ of $L$ with largest slope $\overline{k}$ contains the closed orbit $\gamma$ in its boundary $\partial N$. As a curve in $\partial N$ it has the form $r\mu+s\lambda$ where $\lambda$ is homotopic to $L$. Since also $\gamma$ is homotopic to $L$ we have $s=1$. Since the geodesic is non simple, the associated closed orbit intersects $C$ and the slope of the characteristic foliation is finite at least in a neighborhood of the point of self intersection. Therefore $\overline{k}\neq\infty$ implying that $\overline{k}=\frac{1}{r}$. Finally, since $r\in \mathbb{Z}$ we have $\overline{k}\leq 1$. This shows that it does not exist a neighborhood $N$ of $L$ such that $k+q>0$ if  $q<0$.
\end{proof}
\end{remark}

%\begin{figure}
%\label{bundles2}
%\labellist 
%     \begin{footnotesize}
 %    \pinlabel $L$ at 55 174
 %    \pinlabel $C_{out}$ at 550 100
%    \pinlabel $A_0$ at 50 144
 %     \pinlabel $w$ at 520 150
 %     \pinlabel $A_{\epsilon}$ at 520 194
 %     \pinlabel $A_{-\epsilon}$ at 525 40
 %    \end{footnotesize}
%\endlabellist
%\includegraphics[width=0.9\textwidth]{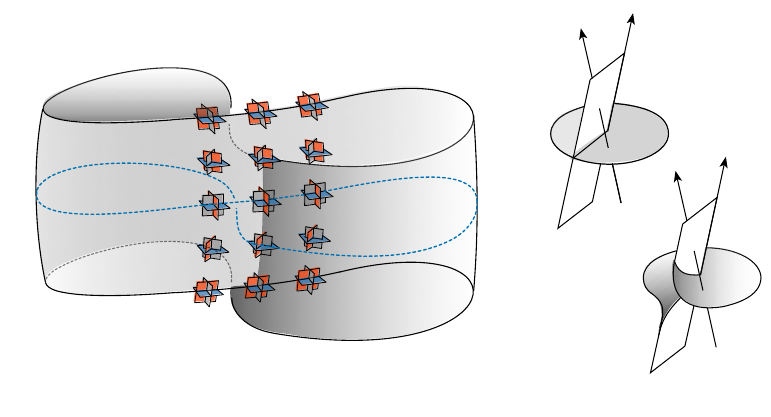}
%  \caption{   }
%\label{bundles2}
%`\end{figure}

%%%%%%%%%%%%%%%%%%%%%%%%%%%%%%%%%%%%%%%%%%%%%%%%%%%%%%%
\bibliographystyle{alpha}      % amsalpha & amsplain
\bibliography{Bibliography}
%%%%%%%%%%%%%%%%%%%%%%%%%%%%%%%%%%%%%%%%%%%%%%%%%%%%%%%

\end{document}